\theoremstyle{definition}
\newtheorem{theorem}{Theorem}[section]
\newtheorem{lem}[theorem]{Lemma}
\newtheorem{cor}[theorem]{Corollary}
\newtheorem{pro}[theorem]{Proposition}
\newtheorem{df}[theorem]{Definition}
\newtheorem{rem}[theorem]{Remark}
\newtheorem{ex}[theorem]{Example}
\numberwithin{equation}{section}
\def\N{{\mathbb N}}
\def\Z{{\mathbb Z}}
\def\C0{{\mathfrak C}}
\def\Int{\mathop\mathrm{Int}}
\def\ILs{\underleftarrow\lim([0,1],T_{s})}
\def\ILcores{\underleftarrow\lim([c_2, c_1],T_{s})}
\def\ILsp{\underleftarrow\lim([0,1],T_{s'})}
\def\chain{{\mathcal C}}
\newcommand{\bd}{\partial}
\def\ie{{\em i.e.,}\ }
\def\eg{{\em e.g.}\ }
\def\eps{\varepsilon}
\def\diam{\mbox{diam}}
\def\mesh{\mbox{mesh}}
\def\width{\mbox{width}}
\def\Int{\mathop\mathrm{Int}}
\def\Cl{\mathop\mathrm{Cl}}
\begin{document}

\title{The Ingram Conjecture}
\author{M.~Barge, H.~Bruin, S.~\v{S}timac\thanks{Supported in part by NSF 0604958 and in part by the
MZOS Grant 037-0372791-2802 of the Republic of Croatia.}
\thanks{The authors thank the Mathematisches Forschungsinstitut Oberwolfach for its hospitality during
the Research in Pairs Programme January 11-24, 2009.
HB also thanks Delft University of Technology, where this paper was
largely completed.}}

\maketitle

\begin{abstract}
We prove the Ingram Conjecture, \ie we show that the inverse limit spaces of every two tent maps with
different slopes in the interval $[1, 2]$ are non-homeomorphic. Based on the structure obtained from the proof, we
also show that every self-homeomorphism of the inverse limit space of the tent map is pseudo-isotopic,
on the core, to some power of the shift homeomorphism.
\end{abstract}

{\it 2010 Mathematics Subject Classification:} 54H20, 37B45, 37E05

{\it Key words and phrases:} tent map, inverse limit space, unimodal
map, classification, pseudo-isotopy

\baselineskip=18pt

\section{Introduction}\label{sec:intro}

Apart from their interest within continuum theory,
inverse limit spaces play a key role in the description of
uniformly hyperbolic attractors \cite{W1,W2}, global `H\'enon-like' strange
attractors \cite{BH} and the structure emerging from homoclinic tangencies
in dynamical systems \cite{BaDi2}. They find further use in the area
of (substitution) tiling spaces \cite{AP} which, in some cases, are covering spaces of the type of
inverse limit spaces with which we are concerned with in this paper; namely,
those with a single tent map $T_s:[0,1] \to [0,1]$,
$x \mapsto \min\{ sx, s(1-x) \}$ as bonding map.
Such inverse limit spaces can be embedded in the plane as global attractors of homeomorphisms
\cite{Mis, Sz, Bembeddings} and immersed in the plane as global attractors of
skew product maps  \cite{HRS}.

Inverse limit spaces are notoriously difficult to classify.
In this paper, we solve in the affirmative the classification problem
known as the Ingram Conjecture:
\begin{theorem}[\textbf{Ingram Conjecture}]\label{thm:Ingram}
If $1 \leq s < s' \leq 2$, then the corresponding inverse limit spaces
$\ILs$ and $\ILsp$ are non-homeo\-mor\-phic.
\end{theorem}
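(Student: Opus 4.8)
The plan is to argue by contradiction. Suppose $h\colon\ILs\to\ILsp$ is a homeomorphism with $1<s<s'\le 2$; the remaining case $s=1$ is trivial, since $\ILs$ is then an arc while $\ILsp$ is not. Both spaces are chainable one--dimensional continua carrying a shift homeomorphism, and each has a core subcontinuum $\ILcores$ (respectively $\ILcoresp$) equal to $\underleftarrow\lim([c_2,c_1],T_s)$, where $c=1/2$ is the turning point and $c_i=T_s^i(c)$. The first step is to fix a canonical dense arc-component $\C0\subset\ILs$ (and $\C0'\subset\ILsp$) and show that $h$ must carry $\C0$ onto $\C0'$. For this one invokes that the set of \emph{folding points} --- points admitting no neighbourhood homeomorphic to the product of a Cantor set and an arc --- is a topological invariant, and that the endpoint $\bar 0=(\dots,0,0,0)$ is distinguished among all endpoints of $\ILs$ as the unique one at which the space is locally an arc. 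Hence $h(\bar 0)=\bar 0'$, and $h(\C0)=\C0'$ for $\C0$ the arc-component of $\bar 0$ (equivalently, one first passes to the cores and pins down $\C0$ through the way it accumulates onto the folding set).

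Next I would set up the combinatorial skeleton of $\C0$. Writing points of $\ILs$ as backward orbits $x=(x_0,x_1,\dots)$ with $T_s(x_{i+1})=x_i$, call $x\in\C0$ a \emph{$p$-point of level $\ell$} if $x_\ell=c$. Parametrising $\C0$ as a ray based at $\bar 0$, the $p$-points partition $\C0$ into \emph{basic arcs}, on each of which every coordinate projection is monotone; organised coarsely into the nested \emph{$\Dlev$-levels} $\Dlev_1\subset\Dlev_2\subset\cdots$, the $p$-points also single out the \emph{salient points} $s_1<s_2<\cdots\to\infty$, the successive ``record'' folds of $\C0$ (with $s_n$, say, the first $p$-point of level $n$). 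The quantitative input is that this skeleton grows geometrically at rate exactly $s$: the number $N_s(n)$ of basic arcs of $\C0$ between $\bar 0$ and $s_n$ --- equivalently, the number of $p$-points there, or the arc-length, all comparable up to universal constants --- satisfies $N_s(n)\asymp s^n$ (the ratio $N_s(n)/s^n$ stays between two positive constants), because passing from the $i$-th to the $(i-1)$-st coordinate via $\pi_{i-1}=T_s\circ\pi_i$ multiplies total variation by a factor comparable to $s$.

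The decisive step, and the one I expect to be the main obstacle, is to show that $h$ respects this skeleton up to bounded error. The heart of it is a ``bounded-to-bounded'' property: there is $M<\infty$ such that $h$ sends each basic arc of $\C0$ into a union of at most $M$ basic arcs of $\C0'$, and symmetrically for $h^{-1}$. Establishing this requires a careful analysis of how $h$ and $h^{-1}$ --- uniformly continuous maps taking small connected sets to small connected sets --- must interact, coordinate by coordinate, with chains of small $\mesh$ and with the $\Dlev$-level and folding structure of the two continua; the genuinely delicate work is near the folding set, where the local ``Cantor set times arc'' picture degenerates and one has to track exactly how folds of $\C0$ get matched to folds of $\C0'$. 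Granting this, the salient points are topologically recognisable \emph{as an ordered sequence} (the successive maximal folds, or circuits of $\C0$ around the core), so $h$ must match them with a single eventual index shift: there is $\ell\in\Z$ with $h(s_n)$ within a bounded number of basic arcs of $s'_{n+\ell}$ for all large $n$.

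Finally I would conclude by comparing growth rates. For all large $n$, the sub-arc of $\C0$ from $\bar 0$ to $s_n$ contains $N_s(n)\asymp s^n$ basic arcs and is mapped homeomorphically by $h$ onto the sub-arc of $\C0'$ from $\bar 0'$ to $h(s_n)$; the latter lies a bounded number of basic arcs from $s'_{n+\ell}$ and hence contains a number of basic arcs comparable to $(s')^{n+\ell}$. By the bounded-to-bounded property these two counts agree up to the fixed factor $M$, so $(s'/s)^n$ stays bounded away from $0$ and $\infty$ as $n\to\infty$. This is impossible unless $s=s'$, contradicting $s<s'$; hence $\ILs$ and $\ILsp$ are non-homeomorphic.
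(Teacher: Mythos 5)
Your proposal has the right broad architecture --- recognize $\C0$ topologically, set up the $p$-point/salient-point (snappy-point) skeleton, show that $h$ respects it up to a fixed index shift, and then read off the slope --- and this is indeed the shape of the paper's argument. But the two steps you flag as ``the main obstacle'' are not merely obstacles: they \emph{are} the proof, and your proposal grants them without establishing either.

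Specifically, (i) the ``bounded-to-bounded'' property and (ii) the claim that salient points are ``topologically recognisable as an ordered sequence'' so that $h$ must match them up to an eventual index shift, are together essentially the content of Proposition~\ref{prop:max_link_symmetry}, Theorem~\ref{thm:shift} and Proposition~\ref{pro:levels}. None of it comes from general soft topology. What the paper actually does is: fix natural chains with $h(\chain_q)\preceq\chain_p\preceq h(\chain_g)$, observe that $h$ sends $q$-link-symmetric arcs to $p$-link-symmetric arcs, and then prove a hard analytic estimate (Proposition~\ref{prop:max_link_symmetry}, a delicate induction relying on Lemma~\ref{lem:nbh} about closest precritical points) bounding how far $T^n$ can be $\eps$-symmetric around a point away from $c$. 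This is what lets Lemma~\ref{lem:snappy} and Lemma~\ref{lem:snappy2} pin down the maximal link-symmetric arcs, which in turn forces the index shift $M_{i+1}=M_i+1$ in Theorem~\ref{thm:shift}. There is no shortcut through ``$h$ takes small connected sets to small connected sets'': a homeomorphism can badly distort arc-length along $\C0$ (the paper's Example~\ref{ex:sin1x} exhibits a self-homeomorphism with $\sup_{x\in\C0}\bar d(x,h(x))=\infty$), so your claimed uniform bound $M$ on how many basic arcs a single basic arc maps across, and the recognisability of the salient sequence, both need the full link-symmetry machinery. Also note that $p$-points and salient points are defined through coordinate projections, not intrinsically, so ``topologically recognisable'' is precisely the non-obvious statement that the section on link-symmetric arcs is there to justify.

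Two further remarks. First, your closing step compares only asymptotic growth rates ($N_s(n)\asymp s^n$ versus $(s')^{n+\ell}$), which would give $s=s'$; the paper instead extracts the stronger conclusion $FP(\C0')=FP(\C0)$ from Proposition~\ref{pro:levels}, i.e.\ an exact match of folding patterns. Your growth-rate comparison is a valid way to finish (and the universal-constant claim $N_s(n)/s^n\in[C_1,C_2]$ is stronger than needed --- exponential growth rate $\log s$ suffices), but it discards information that the paper later reuses for the pseudo-isotopy result. Second, the paper first reduces to slopes $s,s'>\sqrt2$ via Lemmas~\ref{lem:renorm} and \ref{lem:reduce} and to non-preperiodic critical orbits via prior work; your proposal doesn't address these reductions, and in particular the $\eps$-symmetry estimate that everything hinges on is only stated and proved for non-preperiodic $c$.
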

This is the main outstanding conjecture regarding dynamics on continua, dating back to at least the early nineties.
In the ``Continua with the Houston problem book'' in 1995
\cite[page 257]{Ingram}, Ingram writes
\begin{quote}
The [...] question was asked of the author by Stu Baldwin
at the summer meeting of the AMS at Orono, Maine, in 1991...
There is a related question which the author has considered to be of
interest for several years. He posed it at a problem session at the
1992 Spring Topology Conference in Charlotte for the special case
({\em that the critical point has period}) $n=5$.
\end{quote}
It is clear that if two interval maps are topologically conjugate, then
their inverse limit spaces are homeomorphic. Thus it may be more natural
to ask the question for the `fuller' logistic family $f_a(x) = ax(1-x)$, $a \in [0,4]$.
It is well-known \cite{MT} that each logistic map is semi-conjugate to a
tent-map $T_s$ with $s = \exp(h_{top}(f_a))$ (provided $h_{top}(f_a) > 0$),
but the logistic maps
contain infinitely renormalizable maps as well as maps
with periodic attractors, phenomena that are ruled out in the
(uniformly expanding) setting of tent maps with slope $s > 1$.
The effect of renormalization (\ie the existence of periodic
intervals for period $> 1$) on the structure of the inverse limit space
is well-understood, see \cite{BaDi3}: it produces proper subcontinua
that are periodic under the shift homeomorphism, that are homeomorphic with
the inverse limit space of the renormalized map.
Therefore the solution of the Ingram Conjecture also shows that
every pair of logistic maps that are  non-conjugate on their non-wandering
sets have non-homeomorphic inverse limit spaces.

There have been several partial results to the Ingram Conjecture,
\eg Barge and Diamond \cite{BaDi1}, which solved the period $n=5$ case,
and \cite{SV, Bfinite}.
Complete solutions were obtained when the critical
point is periodic by Kailhofer \cite{Kail2} (see also \cite{BJK}), or has finite orbit
by \v{S}timac \cite{Stim}.
More recently, the case where
the critical point is non-recurrent was solved in \cite{RS}. Further results
that classify certain features of inverse limit spaces of tent maps with
non-periodic recurrent critical orbits were obtained in \eg
\cite{BB, Rain, Bsubcontinua}.

Our solution to the Ingram Conjecture gives more information about
the set of self-homeomorphisms on $\ILs$: we show that
any such homeomorphism behaves like an iterate of the shift homeomorphism $\sigma$.

The {\em critical point} $\frac12$ of $T_s$ is denoted by $c$, and
we write $c_i = T^i(c)$. Although $T_s$ is defined on $[0,1]$, there
is a forward invariant interval $[c_2, c_1] = [s(1-s/2),s/2]$,
called the {\em core}, on which $T_s$ is surjective. We call
$\ILcores$ the {\em core of the inverse limit space}. The space $\ILs$ is
the union of the core of the inverse limit and a ray $\C0$ converging onto
it.

Recall that the \emph{composant} of $x \in X$ is defined as the
union of all proper subcontinua of $X$ containing $x$. For $1\le s<2$,
$\ILs$ has only two composants: $\C0$ and $\ILs \setminus
\{ (\dots,0,0,0) \}$. But for $s>\sqrt{2}$, $\ILcores$ is indecomposable and hence has uncountably many pairwise disjoint composants, each of which is dense. If $s>\sqrt{2}$ and the orbit of $c$ is finite, the composants of
$\ILcores$ are the same as the arc-components. Otherwise, the composants can be very complicated.
For $1<s\le \sqrt{2}$, the core has just two composants that overlap in a single arc-component.

\begin{theorem}\label{thm:pseudo}
Given $s \in [1, 2]$,  for every homeomorphism $h:\ILs \circlearrowleft$, there is an $R \in \Z$
such that $h$, restricted to the core $\ILcores$, is pseudo-isotopic to $\sigma^R$, \ie it permutes the
composants of the core of the inverse
limit in the same way as $\sigma^R$.
\end{theorem}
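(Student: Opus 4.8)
The plan is to deduce Theorem~\ref{thm:pseudo} from the structure uncovered in the proof of Theorem~\ref{thm:Ingram}. The degenerate cases $s \in [1,\sqrt2]$ are essentially trivial: there the core has only two composants (overlapping in a single arc-component), so any homeomorphism either fixes or swaps them, and one checks directly that $\sigma$ and $\sigma^{-1}$ realize these two possibilities; for $s$ with finite critical orbit and for the non-recurrent case, Theorem~\ref{thm:pseudo} is already essentially contained in \cite{Stim} and \cite{RS}. So the heart of the matter is $s \in (\sqrt2,2]$ with $\orb(c)$ infinite, where $\ILcores$ is indecomposable with uncountably many dense composants, and the claim is that an arbitrary self-homeomorphism $h$ of $\ILs$ acts on this composant set exactly as some power of $\sigma$ does.

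First I would recall the combinatorial/geometric machinery that drives the proof of the Ingram Conjecture: the decomposition of the inverse limit into arcs via $p$-points and the associated folding (turnlink) structure, the notion of the ``$\Dlev$-levels'' and the chainability structure $\chain$ witnessed in the notation, and above all the key rigidity statement that a homeomorphism $h$ must, up to a bounded error, carry the arc-length/level structure of $\ILcores$ to itself in a way that matches $\sigma^R$ for a specific integer $R$ (read off from how $h$ moves a fixed ``basic'' arc or $p$-point a bounded combinatorial distance). The plan is: (1) fix a point $x$ in the core and consider its composant $\mathcal{K}_x$; by indecomposability this composant is a dense arc-ray (or line), and $h(\mathcal{K}_x)$ is again a composant. (2) Use the structure theorem from the Ingram proof to show that $h$ followed by $\sigma^{-R}$ moves every $p$-point only a \emph{bounded} amount along the natural parametrization --- i.e. $\sigma^{-R}\circ h$ is, in the appropriate sense, ``close to the identity'' on the scale of the chain links. (3) Conclude that $\sigma^{-R}\circ h$ sends each composant to itself, which is exactly the assertion that $h$ permutes composants the same way $\sigma^R$ does; then package this as a pseudo-isotopy, using that a self-map of the core that preserves every composant is pseudo-isotopic to the identity (composants being the fibers of the relevant quotient).

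For the pseudo-isotopy language itself, I would make explicit the quotient $q:\ILcores \to \ILcores/\!\sim$ collapsing each composant to a point, observe that $\sigma$ descends to a map on this quotient, and show that two homeomorphisms are pseudo-isotopic precisely when they induce the same map on $\ILcores/\!\sim$; then Theorem~\ref{thm:pseudo} becomes the statement that $h$ and $\sigma^R$ induce the same self-map of $\ILcores/\!\sim$, which is (2)--(3) above restated. One subtlety to handle carefully is that $h$ is a priori only defined and known to be ``shift-like'' on all of $\ILs$ (ray plus core), so I must first argue $h$ preserves the core --- this follows because the core is intrinsically characterized (e.g. as the union of all nondegenerate proper subcontinua that are not arcs, or via the unique non-degenerate indecomposable subcontinuum), so $h$ maps core to core and hence descends to the core.

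The main obstacle, as I see it, is step (2): extracting from the proof of the Ingram Conjecture the precise quantitative statement that $\sigma^{-R}\circ h$ displaces $p$-points by only a bounded combinatorial amount, \emph{uniformly}, and then upgrading ``bounded displacement of all $p$-points'' to ``preserves every composant.'' The first part requires carefully isolating the right invariant from the (necessarily intricate) case analysis in the Ingram proof --- the argument there produces $s=s'$ by deriving a contradiction from mismatched folding patterns, and one must instead run it ``with $s=s'$'' to read off the rigidity of $h$ rather than a contradiction. The second part is delicate because composants in the recurrent case are genuinely complicated (not arc-components), so ``bounded displacement along arcs'' does not obviously imply ``same composant''; I expect this needs the fact that each composant is a countable increasing union of arcs whose endpoints are $p$-points, together with the observation that a bounded shift in $p$-point index cannot move a point out of the increasing union that defines its composant. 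Getting this last implication clean --- rather than by yet another long combinatorial estimate --- is where I would expect to spend the most effort.
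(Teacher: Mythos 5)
The central difficulty you flag in step (2) --- that $\sigma^{-R}\circ h$ should displace $p$-points by only a \emph{bounded} amount along the natural parametrization --- is not a technical obstacle to be overcome but a false premise. The paper explicitly constructs (Example~\ref{ex:sin1x}) a self-homeomorphism $h$ of $\ILs$ that is isotopic to the identity and yet has $\sup_{x\in\C0}\bar d(x,h(x))=\infty$: one finds a sequence of disjoint neighborhoods $U_n$ of shrinking diameter, each containing an arc $A_n\subset\C0$ of arc-length at least $n+1$, and perturbs $h$ inside $A_n$ by an arc-length $n$. So a general self-homeomorphism cannot be brought to bounded arc-length displacement by composing with any power of $\sigma$, and any argument routed through such a bound cannot close. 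This is precisely why the paper cannot just quote \cite[Theorem~4.2]{BJK}, which requires that bound.

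There is a second gap: in step (1) you assert that the composant of a point in the core is ``a dense arc-ray (or line).'' That is only true when the critical orbit is finite (then composants coincide with arc-components); the paper notes at the end of Section~\ref{sec:intro} that otherwise ``the composants can be very complicated,'' and indeed when $c$ is recurrent the composant properly contains the arc-component and need not be an increasing union of arcs. You correctly flag this tension in your last paragraph but your proposed repair (a composant as ``a countable increasing union of arcs whose endpoints are $p$-points'') again describes the arc-component, not the composant, so it does not resolve the issue. The paper's actual mechanism sidesteps both problems at once: instead of bounding arc-length displacement, it shows via the $p$-bridge structure (Corollary~\ref{cor:bridges}) that $\tilde h=\sigma^{-R}\circ h$ maps each long $(q+1)$-bridge to essentially itself, up to an error measured in links of a coarsened chain $\tilde\chain_{p-R}$. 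Then, for a core point $x$ with $\pi_{q+1}(x)\geq P$, one takes $x_n\to x$ in $\C0$, forms the arcs $H_n=[\tilde h(x_n),x_n]$ inside the ambient bridges, and passes to a Hausdorff limit $H$. The crucial observation is not that $H_n$ has bounded arc-length (it need not) but that $\pi_{q+1}(H)\subset[Q,c_1]$, a \emph{proper} subinterval of $[c_2,c_1]$; hence $H$ is a proper subcontinuum of $\ILcores$ containing both $x$ and $\tilde h(x)$, which is exactly what ``same composant'' means. To carry out your plan you would need to replace the bounded-displacement heuristic with this (or some equivalent) proper-subcontinuum construction, which is the genuinely new input the paper supplies.
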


Our proof of the Ingram Conjecture relies on the properties of so-called link-symmetric arcs in the
composant $\C0$ of $\ILs$ containing the endpoint $\alpha := (\dots,0,0,0)$. Inverse limit spaces are
chainable, and w.r.t.\ natural chains, a homeomorphism $h:\ILsp \to \ILs$ maps link-symmetric
arc to link-symmetric arcs. From this we derive that maximal link-symmetric arcs in $\ILsp$
centered at so-called snappy points $s'_i$ map to link-symmetric arcs centered at snappy points
$s_{i+M} \in \ILs$ for some $M \in \Z$ and all sufficiently large $i \in \N$.

This in turn implies that $h$ maps so-called $q$-points close to $p$-points, while `translating' their
levels by a fixed number $M$. This shows that $h$ effectively fixes the folding pattern of the zero-composant,
with the Ingram Conjecture as an easy consequence. Additional arguments show that every self-homeomorphism
of $\ILs$, when restricted to the core, is \emph{pseudo-isotopic} to a power $\sigma^R$ of the shift for some $R \in \Z$.

We give the basic definitions in the next section. In Section~\ref{sec:lemmas} we investigate the lengths of
maximal link-symmetric arcs, leading in Section~\ref{sec:link-symmetry} to the proof that a homeomorphism
between two unimodal inverse limit spaces induces a shift of indices of snappy points, and more generally,
acts as a shift on the levels of $q$-points and $p$-points. This leads to the proof of the Ingram Conjecture.
Finally, in Section~\ref{sec:Pseudo}, we prove the remaining results on pseudo-isotopy.

\section{Definitions}\label{sec:def}

Let $\N := \{ 1, 2, \dots \}$ and  $\N_0 := \{ 0, 1, 2, \dots \}$.
Let $T_s:[0,1] \to [0,s/2]$, $T_s(x) = \min\{ sx, s(1-x)\}$ be the
tent map with slope $s \in [1,2]$ and critical point $c = \frac12$.
Write $c_i = c_i(s) := T_s^i(c)$, so in particular $c_1 = \frac{s}{2}$ and $c_2
= s(1-\frac{s}{2})$.

The inverse limit space $\ILs$ is the collection of backward orbits
\[
 \{ x = (\dots, x_{-2}, x_{-1}, x_0) :  T_s(x_{i-1}) = x_i \in [0,s/2]
\text{ for all } i \leq 0\},
\]
equipped with metric $d(x,y) = \sum_{n \le 0} 2^n |x_n - y_n|$ and {\em induced $($or shift$)$ homeomorphism}
$\sigma = \sigma_s$ given by
\[
\sigma(\dots, x_{-2}, x_{-1}, x_0) = (\dots, x_{-2}, x_{-1}, x_0, T_s(x_0)).
\]
Let $\pi_p: \ILs \to [0, 1]$, $\pi_p(x) = x_{-p}$, be the $p$-th projection map. Since $T_s$ fixes $0$,
$\ILs$ contains the endpoint $\alpha  := (\dots, 0,0,0)$. The composant of $\ILs$ containing this point
is denoted by $\C0$; it is a ray converging from $\alpha $ to, but disjoint from, the core of the inverse
limit space $\ILcores$.

Frequently, the Ingram Conjecture is posed for slopes $s, s' \in [\sqrt{2}, 2]$ only, because for
$0 < s \leq \sqrt{2}$, $\ILcores$ is decomposable. Since $\ILs$ is a single point for $s=0$ and a
single arc for $s \in (0,1]$, we will always assume that all slopes $s$ are greater than 1. The next two lemmas
show how to reduce the case $s \in (1,\sqrt{2}]$ to $s \in (\sqrt{2}, 2]$.

\begin{lem}\label{lem:renorm}
For $2^{1/2^{n+1}} \le s \le 2^{1/2^n}$, $n\in \N$, the core of the inverse limit space
$\ILcores$ is homeomorphic with two
copies of ${\underleftarrow\lim([0,1],T_{s^2})}$ joined at their endpoints.
\end{lem}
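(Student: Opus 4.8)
The plan is to exhibit the renormalization structure of $T_s$ directly and then transport it to the inverse limit. First I would observe that for $s \in [2^{1/2^{n+1}}, 2^{1/2^n}]$ the map $T_s$ is exactly $(n)$-times renormalizable in the most rigid way: there is a nested sequence of restrictive intervals $J_0 \supset J_1 \supset \cdots \supset J_n$ with $c \in J_n$, where $J_k$ is a periodic interval of period $2^k$, and on $J_k$ the first-return map $T_s^{2^k}|_{J_k}$ is (after affine rescaling) conjugate to the tent map $T_{s^{2^k}}$. For the single step $n=1$ this is the classical fact that when $s^2 \le 2$, i.e. $s \le \sqrt2$, the core $[c_2,c_1]$ splits at the fixed point $\hat p = s/(1+s)$ into two intervals $[c_2,\hat p]$ and $[\hat p, c_1]$ that are interchanged by $T_s$, and $T_s^2$ restricted to either one is affinely conjugate to $T_{s^2}$ on $[0,1]$ (the slope squares, the two laps persist because $s^2 \le 2$ keeps the image inside the interval). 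The general $n$ follows by iterating this observation $n$ times, but for the lemma as stated I only need the clean description of the core in terms of $T_{s^2}$, so I would focus on the $n=1$ picture and note that the hypothesis $s \le 2^{1/2^n}$ is exactly what is needed for the return map to still be a full tent map rather than something with a periodic attractor.

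Next I would pass to inverse limits. Let $A = [c_2,\hat p]$, $B = [\hat p, c_1]$, so the core interval is $A \cup B$ with $A \cap B = \{\hat p\}$, $T_s(A) = B$, $T_s(B) = A$. The shift $\sigma$ on $\ILcores$ permutes the two ``halves'' $\widehat A := \{x \in \ILcores : x_0 \in A\}$ and $\widehat B := \{x \in \ILcores : x_0 \in B\}$, and $\widehat A \cap \widehat B = \{x : x_0 = \hat p\}$ is a single point, namely the fixed point $\hat{\mathbf p} = (\dots,\hat p,\hat p,\hat p)$ of $\sigma$ (since $\hat p$ has a unique preimage in the core under $T_s$, being the fixed point). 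So $\ILcores = \widehat A \cup \widehat B$ with the two pieces meeting in exactly one point. It then remains to identify each half with $\underleftarrow\lim([0,1],T_{s^2})$. For this I would use the standard fact that $\underleftarrow\lim(A, T_s^2|_A)$ — built from the subsequence of even-indexed coordinates — is homeomorphic to $\widehat A$: the map $x = (\dots,x_{-2},x_{-1},x_0) \mapsto (\dots, x_{-4}, x_{-2}, x_0)$ is the required homeomorphism, using that $x_{-2k} \in A$ for all $k$ once $x_0 \in A$ (because $A$ is periodic of period $2$ and $T_s^{-2}(A) \cap \text{core} = A$). Finally the affine conjugacy $h: A \to [0,1]$ with $h \circ (T_s^2|_A) = T_{s^2} \circ h$ induces, coordinatewise, a homeomorphism $\underleftarrow\lim(A,T_s^2|_A) \to \underleftarrow\lim([0,1],T_{s^2})$. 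The same for $B$. Since an endpoint of $\underleftarrow\lim([0,1],T_{s^2})$ corresponds under $h$ to an endpoint of $A$ (resp. $B$), and the common point $\hat{\mathbf p}$ of $\widehat A, \widehat B$ projects to the endpoint $\hat p = c_2$-end of $A$ and the $\hat p$-end of $B$, I would check that $\hat{\mathbf p}$ is carried to an endpoint in each copy, so the two copies are joined precisely at their endpoints, which is the assertion.

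The main obstacle I anticipate is the bookkeeping showing that $\widehat A \cap \widehat B$ is a single point and that this point is an \emph{endpoint} of each copy of $\underleftarrow\lim([0,1],T_{s^2})$ — equivalently, that $\hat p$ is an endpoint of $A$ and of $B$ and has a unique backward orbit inside the core realizing it. One must be slightly careful: $\hat p$ does have two preimages under $T_s$ in $[0,1]$ (namely $\hat p$ itself and $1-\hat p$), but $1-\hat p$ lies outside the core when $s \le \sqrt 2$ — this is exactly where the slope restriction is used again — so within $\ILcores$ the only backward orbit through $\hat p$ is the constant one. A second, more cosmetic point is to make sure the affine rescaling has the right orientation so that ``endpoint goes to endpoint'' rather than ``endpoint goes to the turning-point image''; since $h$ is an affine homeomorphism of intervals this is automatic, but it should be stated. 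Everything else — the homeomorphism between an inverse limit over a periodic subinterval and the corresponding ``thinned'' inverse limit, and functoriality of $\underleftarrow\lim$ under conjugacy — is routine and I would cite it rather than reprove it.
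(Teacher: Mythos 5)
Your approach is the same as the paper's: split the core at the orientation-reversing fixed point $p = s/(s+1)$, use that $T_s$ exchanges $[c_2,p]$ and $[p,c_1]$, pass to even-indexed coordinates to identify each half of the inverse limit with $\underleftarrow\lim$ of $T_s^2$ on one subinterval, and then transport via an affine conjugacy to $T_{s^2}$. You supply more detail on the ``thinning'' homeomorphism and on the uniqueness of the backward orbit of $p$, which the paper leaves implicit, so your write-up is if anything more complete.

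One inaccuracy worth flagging: you claim that $1-p$ lies outside the core whenever $s \le \sqrt{2}$, but this fails at equality. Indeed $1-p = 1/(s+1)$ and $c_2 = s - s^2/2$, and $c_2 - 1/(s+1) = -\tfrac{1}{2}(s-1)(s^2-2)/(s+1)$, which is strictly positive for $1<s<\sqrt2$ but vanishes at $s=\sqrt{2}$; so at $s = 2^{1/2^n}$ one has $1-p = c_2$, the left endpoint of the core. At those boundary slopes the set $\{x \in \underleftarrow\lim([c_2,c_1],T_s) : x_0 = p\}$ contains more than the constant orbit (one can take $x_{-1}=c_2$, then $x_{-2}=c_1$, $x_{-3}=c$, and then branch freely), so the two halves overlap in more than a single point and your step identifying $\widehat A \cap \widehat B$ with $\{(\dots,p,p,p)\}$ breaks down. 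The paper's one-line proof passes over this boundary case too, so this is not a defect peculiar to your version; but since the lemma is stated with $\le$ on both ends, the argument as written only settles the strict inequality $s < 2^{1/2^n}$ and the endpoint case needs separate treatment.
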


\begin{proof}
For this range of $s$, $T_s([c_2,p])=[p,c_1]$ and $T_s([p,c_1])=[c_2,p])$,
where $p:=\frac{s}{s+1}$ is the positive fixed point of $T_s$.
It follows that $\ILcores$ is homeomorphic with two copies of
${\underleftarrow\lim([p, c_1],T^2_{s})}$ joined at the endpoint
$(\ldots,p,p,p)$. Direct calculation
shows that, if $L$ is the orientation preserving affine homeomorphism from
$[p,c_1]$ onto
$[0,c_1(s^2)]$, then $L\circ T^2_s \circ L^{-1}=T_{s^2}$ on $[0,c_1(s^2)]$
and hence
${\underleftarrow\lim([p, c_1],T^2_{s})}$ is homeomorphic with  ${\underleftarrow\lim([0,1],T_{s^2})}$.
\end{proof}

\begin{lem}\label{lem:reduce}
Suppose that $2^{1/2^n}<  s\le 2^{1/2^{n-1}}$ and $2^{1/2^{n'}}<  s'\le 2^{1/2^{n'-1}}$, $n,n'\in \N$,
and suppose that $\ILs$ is homeomorphic with $\ILsp$. Then $n=n'$ and
assuming that the Ingram Conjecture holds for slopes $> \sqrt{2}$,
then also
${\underleftarrow\lim([0,1],T_{s^{2^{n-1}}})}$ is homeomorphic with
${\underleftarrow\lim([0,1],T_{(s')^{2^{n-1}}})}$.
\end{lem}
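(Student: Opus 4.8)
The plan is to bootstrap off Lemma~\ref{lem:renorm}, which already identifies $\ILs$ (via its core) with a ``doubled'' copy of $\ILsk$ where $k = s^2$, and iterate. First I would argue that the integer $n$ is a topological invariant of $\ILs$. The key observation is that $n$ records the \emph{renormalization depth}: by Lemma~\ref{lem:renorm} applied repeatedly, for $2^{1/2^n} < s \le 2^{1/2^{n-1}}$ the core $\ILcores$ is obtained by $n-1$ successive doublings from $\underleftarrow\lim([0,1],T_{s^{2^{n-1}}})$, whose parameter $s^{2^{n-1}}$ lies in $(\sqrt 2, 2]$ and hence has indecomposable core. I would extract $n$ from the continuum intrinsically: e.g.\ $\ILcores$ has a unique finite decomposition into the smallest number of proper subcontinua each of which is either a point or has indecomposable ``sub-core'', or more concretely, $n$ is determined by the structure of proper subcontinua that are cyclically permuted by $\sigma$ (the renormalization structure of \cite{BaDi3} referenced in the introduction). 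Since a homeomorphism $\ILs \to \ILsp$ restricts to a homeomorphism of cores (the core being topologically characterized, e.g.\ as the union of all indecomposable subcontinua, or via the ray $\C0$ being the unique composant that is a ray with an endpoint), this invariant is preserved, giving $n = n'$.

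Next, with $n = n'$ in hand, I would peel off the doublings one at a time. A homeomorphism $H : \ILs \to \ILsp$ induces a homeomorphism of cores $\ILcores \to \ILcoresp$. By Lemma~\ref{lem:renorm}, each core is two copies of $\underleftarrow\lim([0,1],T_{s^2})$ (resp.\ $T_{(s')^2}$) wedged at the fixed-point endpoint $(\ldots,p,p,p)$. The wedge point is topologically distinguished — it is the unique point whose removal leaves (at least) two quasi-components, or the unique point lying in two distinct ``half-core'' indecomposable-cored pieces — so $H$ fixes it and maps each half onto a half of the target, possibly swapping the two halves. Either way $H$ restricts to a homeomorphism $\underleftarrow\lim([0,1],T_{s^2}) \to \underleftarrow\lim([0,1],T_{(s')^2})$. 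Now $s^2$ and $(s')^2$ satisfy $2^{1/2^{n-1}} < s^2 \le 2^{1/2^{n-2}}$, so we are in exactly the same situation with $n$ replaced by $n-1$. Iterating $n-1$ times yields a homeomorphism $\underleftarrow\lim([0,1],T_{s^{2^{n-1}}}) \to \underleftarrow\lim([0,1],T_{(s')^{2^{n-1}}})$, as claimed. (The hypothesis that the Ingram Conjecture holds for slopes $>\sqrt 2$ is, strictly, not needed to produce the homeomorphism; it is invoked so that the conclusion immediately forces $s^{2^{n-1}} = (s')^{2^{n-1}}$, hence $s = s'$, closing the reduction. I would phrase the statement so this is transparent.)

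The main obstacle I anticipate is the topological characterization work in the first two steps: showing that ``the core,'' ``the wedge point of each doubling,'' and ``the renormalization depth $n$'' are genuinely invariant under an arbitrary homeomorphism, rather than just under ones respecting the given coordinate structure. For the core this is standard (the ray $\C0$ is the only composant homeomorphic to $[0,\infty)$, so its ``open end'' complement is canonical). For the wedge points and for $n$ one must lean on the indecomposability dichotomy quoted in the introduction — $\underleftarrow\lim([0,1],T_\sigma)$ has indecomposable core iff $\sigma > \sqrt 2$ — together with the fact that a wedge of two indecomposable continua at a point has that point as its unique ``cut point between indecomposable pieces.'' Making ``indecomposable piece'' precise (e.g.\ as a maximal subcontinuum with indecomposable core, using that these pieces are nested/disjoint under the doubling structure) is the delicate bookkeeping. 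Once that language is set up, the induction on $n$ is routine, and Lemma~\ref{lem:renorm} supplies the one nontrivial geometric input (the conjugacy $L \circ T_s^2 \circ L^{-1} = T_{s^2}$) for free.
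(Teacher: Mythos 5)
Your plan and the paper's proof rest on the same two pillars — Lemma~\ref{lem:renorm} applied repeatedly and the topological invariance of indecomposability — but the paper organizes the argument globally rather than by iterated peeling. Instead of extracting one wedge at a time, the paper unfolds Lemma~\ref{lem:renorm} all the way down in a single sentence: for $2^{1/2^n}<s<2^{1/2^{n-1}}$, $\ILs$ is described as a ray winding onto a pair of rays, each winding onto a pair of rays, $\dots$, terminating in $2^{n-1}$ indecomposable continua, each a copy of the core of ${\underleftarrow\lim([0,1],T_{s^{2^{n-1}}})}$. The integer $n$ is then read off immediately as $1+\log_2(\text{number of maximal indecomposable subcontinua})$, which sidesteps the need for a level-by-level identification of the wedge points. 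This matters, because your proposed characterizations of the top wedge point are problematic: it is \emph{not} the unique point whose removal yields at least two (quasi-)components (any interior point of one of the connecting rays also separates the core), and ``the unique point lying in two distinct half-core pieces'' is circular until the halves are characterized. So the delicate bookkeeping you flag is more than bookkeeping in the iterative formulation, whereas the paper's count of indecomposable leaves avoids the issue entirely for the first conclusion ($n=n'$). You also omit the boundary parameters $s = 2^{1/2^{n-1}}$, which the paper treats separately: there the penultimate level is not a pair of rays winding onto indecomposable pieces but two Knaster continua joined at a common endpoint, and the paper notes this degenerate picture forces $s'=s$ directly. Finally, your parenthetical claim that the Ingram hypothesis is not needed to produce the homeomorphism of the deepest-level spaces deserves care: both your peeling and the paper's leaf-counting most directly yield that the indecomposable \emph{cores} at the bottom are homeomorphic, and passing from homeomorphic cores to homeomorphic full spaces ${\underleftarrow\lim([0,1],T_{s^{2^{n-1}}})}$ requires either a topological characterization of the ``ray-plus-leaf'' pieces inside the big space or an appeal to the (core version of the) classification; the paper is terse on this point and your version inherits the same subtlety.
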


\begin{proof}
For $2^{1/2}<s<2$, $\ILs$ consists of a ray $\C0$ winding onto an indecomposable continuum,
namely $\ILcores$. It follows from Lemma~\ref{lem:renorm} that for $2^{1/2^n}<  s< 2^{1/2^{n-1}}$,
$\ILs$ consists of a ray winding onto a pair of rays, each winding onto a pair of rays,\ldots,
each winding onto a pair of rays, each of which winds onto an indecomposable continuum.
There are $2^{n-1}$ of these indecomposable continua, each homeomorphic with the core of the
inverse limit space  ${\underleftarrow\lim([0,1],T_{s^{2^{n-1}}})}$.
Hence if $\ILs$ is homeomorphic with $\ILsp$, then $n = n'$ and
${\underleftarrow\lim([0,1],T_{s^{2^{n-1}}})}$ is homeomorphic with
${\underleftarrow\lim([0,1],T_{(s')^{2^{n-1}}})}$.
To cover the remaining cases, note that if $s= 2^{1/2^{n-1}}$, then the only alteration needed in
the above description of $\ILs$ is that at the penultimate level, instead of a pair of rays winding
onto a pair of indecomposable subcontinua, we just have two indecomposable subcontinua (each homeomorphic
with ${\underleftarrow\lim([0,1],T_2)}$) joined at their common endpoint.
It is clear in this case that if $\ILsp$ is homeomorphic with $\ILs$, then $s'=s$.
\end{proof}
\begin{df}\label{df:dbar} The {\em arc-length} or {$\bar d$ metric} on $\C0$ is defined as
\[
\bar d(x,y) = s^p |x_{-p} - y_{-p}|
\]
for each $p$ so that $\pi_p:[x,y] \to [0,1]$ is injective.
\end{df}
If $x, y \in \C0$, then we denote by
$[x, y]$ the arc between $x$ and $y$, and by $(x, y)$ the interior of
the arc $[x, y]$. We write $x \preceq y$ if $x \in [\alpha, y]$,
\ie $\bar d(\alpha,x) \leq \bar d(\alpha, y)$.
\begin{df}\label{df:chain}
A continuum is {\em chainable} if for every $\eps > 0$, there is a cover
$\{ \ell^1, \dots , \ell^n\}$ of open sets (called {\em links}) of diameter
$< \eps$
such that $\ell^i \cap \ell^j \neq \emptyset$ if and only if $|i-j| \leq 1$.
Such a cover is called a {\em chain}.
Clearly the interval $[0,s/2]$ is chainable.
We call $\chain_p$ a {\em natural} chain of $\ILs$ if
\begin{enumerate}
\item there is a chain $\{ I^1_p, I^2_p, \dots , I^n_p \}$ of $[0,s/2]$,
with the relatively open interval $I^j_p$ and $I^{j+1}_p$
adjacent for all $1 \leq j < n-1$, such that
$\ell^j_p := \pi_p^{-1}(I^j_p)$ are the links of $\chain_p$;
\item each point $x \in \cup_{i=0}^p T_s^{-i}(c)$ is the boundary
point of some link $I^j_p$;
\item for each $i$ there is $j$ such that $T_s(I^i_{p+1}) \subset I^j_p$.
\end{enumerate}
Let us define $\width(\chain_p) := \max_j |I^j_p|$.
If $\width(\chain_p) < \eps s^{-p}/2$
then $\mesh(\chain_p) := \max\{ \diam(\ell) :\ell \in \chain_p\} < \eps$, which
shows that $\ILs$ is indeed chainable.

Condition 3.\ ensures that $\chain_{p+1}$ \emph{refines}  $\chain_p$
(written $\chain_{p+1} \preceq \chain_p$).
\end{df}
\begin{df}\label{df:p-point}
A point $x = (\dots , x_{-2}, x_{-1}, x_0) \in \C0$ is called a {\em $p$-point} if $x_{-j} = c$ for some
$j \geq p$. For the largest such $j$, the number $L_p(x) := j-p$ is called the {\em $p$-level}. In
particular, $x_0 = T_s^{p+L_p(x)}(c)$. The ordered set of all $p$-points of composant $\C0$ is denoted by
$E_p$, and the ordered set of all $p$-points of $p$-level $l$ by $E_{p,l}$. Given an arc $A \subset \C0$
with successive $p$-points $x^0, \dots , x^n$, the {\em $p$-folding pattern} of $A$, denoted by $FP_p(A)$,
is the sequence $FP_p(A) = L_p(x^0), \dots , L_p(x^n)$. The {\em folding pattern of composant} $\C0$,
denoted by $FP(\C0)$, is the sequence $L_p(z^1), L_p(z^2), \dots , L_p(z^n), \dots$, where
$E_p = \{ z^1, z^2, \dots , z^n, \dots \}$ and $p$ is any nonnegative integer. Let $q \in \N$, $q > p$, and
$E_q = \{ y^0, y^1, y^2, \dots \}$. Since $\sigma^{q-p}$ is an order-preserving homeomorphism of $\C0$,
it is easy to see that, for every $i \in \N$, $\sigma^{q-p}(z^i) = y^i$ and $L_p(z^i) = L_q(y^i)$. Therefore,
the folding pattern of $\C0$ does not depend on $p$.
\end{df}
For the above arc $A$, the projection $\pi_p:A \to [0,s/2]$ need not be
injective; so the folding pattern of $A$ can be very long and $A$ may pass
through the same link $\ell^j$ of the natural chain $\chain_p$ many times.
If $A^j$ is an arc component of $A \cap \ell^j$, then we say that
$A^j$ {\em goes straight} through $\ell^j$ if $\pi_p|_{A^j}$ is injective;
otherwise it {\em turns} in $\ell^j$.
If $A^j$ turns in $\ell^j$, then $A^j$ contains at least one $p$-point.

\begin{df}\label{df:linksym}
Let $\ell^0, \ell^1, \dots, \ell^k$ be those links in $\chain_p$ that are successively visited by an arc
$A = [u,v] \subset \C0$ (hence $\ell^i \neq \ell^{i+1}$, $\ell^i \cap \ell^{i+1} \neq \emptyset$ and
$\ell^i = \ell^{i+2}$ is possible if $A$ turns in $\ell^{i+1}$). Let $A^i \subset \ell^i$ be the
corresponding arc components such that $\Cl A^i$ are subarcs of $A$.
We call the arc $A$
\begin{itemize}
\item {\em $p$-link-symmetric} if $\ell^i = \ell^{k-i}$ for $i = 0, \dots, k$;
\item {\em maximal $p$-link-symmetric} if it is  $p$-link-symmetric
and there is no $p$-link-symmetric arc $B \supset A$ and passing through more links than $A$;
\item \emph{$p$-symmetric} if $\pi_p(u) =  \pi_p(v)$ and if for $A \cap E_p = \{ x^0, \dots , x^n \}$
we have $L_p(x^i) = L_p(x^{n - i})$ for every $i = 0, \dots , n$.
\end{itemize}
In any of these cases, the $p$-point of $A^{k/2}$ with the highest $p$-level is called the {\em center} of $A$, and the link $\ell^{k/2}$ is called the {\em central link} of $A$.
\end{df}

It is easy to see that if $A$ is $p$-symmetric, then $n$ is even and
$L_p(x^{n/2}) = \max \{ L_p(x^i) : x^i \in A \cap E_p \}$. Clearly, every $p$-symmetric arc is
$p$-link-symmetric as well, but the converse does not hold.

\begin{df}\label{df:snappy}
Let $(s_i)_{i \in \N}$ be a sequence of $p$-points such that
$0 \leq L_p(x) < L_p(s_i)$ for every $p$-point $x \in (\alpha , s_i)$. We call
$p$-points satisfying this property \emph{snappy}.
\end{df}
Since for every slope $s > 1$ and $p \in \N_0$, the sequence $FP(\C0)$
starts as $0 \ 1 \ 0 \ 2 \ 0 \ 1 \ \dots$, and since by definition $L_p(s_1) > 0$, we have
$L_p(s_1) = 1$. Also, since $s_i = \sigma^{i-1}(s_1)$, $L_p(s_i) = i$, for every $i \in \N$. Note that
the snappy $p$-points depend on $p$: if $p \geq q$, then the snappy $p$-point $s_i$ equals the snappy
$q$-point $s_{i+p-q}$.

Let us extend the notion of folding pattern as follows.
A sequence $e_1, \dots , e_k$ is the {\em folding pattern} of $T^j|_H$ for an interval $H \subset [0, 1]$
if $c_{e_1} = T^j(x_1), \dots , c_{e_k} = T^j(x_k)$, where $x_1 < \dots < x_k$ are the critical
points of $T^j$ on $H$. (If $0 \in H$, then the folding pattern starts with $*$ by convention, just as
$*$ denotes the conventional $p$-level of $\alpha$.)
In this extended terminology, the $p$-folding pattern of $[\alpha, s_{j+1}]$ is the same as
the folding pattern of $T^j$ on $[0, c_1]$, independently of $p$.

Measured in arc-length,
$\bar d(\alpha ,s_1) = \frac12 s^p$, and since $\sigma(s_i) = s_{i+1}$ we
obtain
\begin{equation}\label{eq:dbarmi}
\bar d(\alpha , s_i) = \frac12s^{p-1} s^i \quad \text{ for all } i \geq 1.
\end{equation}

\section{Maximal Link-Symmetric Arcs}\label{sec:lemmas}

In this section we establish upper bounds for the lengths of $p$-link-symmetric arcs.
The Ingram Conjecture was previously proved for all tent-maps with a (pre)periodic critical point,
see \cite{Stim}. So let as assume
from now on that the slope $s$ is such that  $c$ is not (pre)periodic. Throughout
this section we use the notation $T := T_s$, $a_k := T^k(a)$ for any point or interval
(except for the precritical points $z_k$ in Definition~\ref{def:ccp} below), and $\hat a := 1-a$
is the symmetric point around $c$.

\begin{df}\label{def:eps-symmetry}
Given $\eps > 0$ and $H := [a,b] \subset [0,c_1]$,
we say that $T^n|_H$ is \emph{$\eps$-symmetric},
if $|T^n(a+t) - T^n(b-t)| < \eps$ for all $0 \leq t \leq b-a$.
\end{df}
If $\width(\chain_p) < \eps$ and the arc $J \subset [\alpha , s_k]$
is $p$-link-symmetric, then $\pi_{p+k} :J \to H := \pi_{p+k}(J)$ is one-to-one and $T^k|_H$ is
$\eps$-symmetric.
\begin{df}\label{def:eps-periodic}
We say that $T^n|_H$ is \emph{$\eps$-periodic} of period $2\eta$ if
$|T^n(t) - T^n(t+2\eta)| < \eps$ for all $t, t+2\eta \in H$.
\end{df}
If $T^n|_H$ is $\eps$-symmetric around two centers that are $\eta$
apart, then $T^n|_H$ is $\eps$-periodic with period $2\eta$.
We will explain this fact in more detail in the proof
of Proposition~\ref{prop:max_link_symmetry}, where it is used several times.

\begin{df}\label{def:ccp}
We call $z_k$ a \emph{closest precritical point} if $T^k(z_k) = c$ and  $T^k$ maps
$[c,z_k]$ monotonically onto $[c_k, c]$. Clearly, if $z_k$ is a closest precritical points,
so is $\hat z_k$.
\end{df}

\begin{lem}\label{lem:nbh}
There are infinitely many $N$ and closest precritical points $z_N$ such that
$\theta_N := \min\{ |c_i-c| : 0 < i \leq N \} > |z_N - c|$.
\end{lem}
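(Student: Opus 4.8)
The first step is to convert the inequality into a condition on the orbit of $c$. Since $c$ is not (pre)periodic, $T^i(c)\neq c$ for all $i\ge 1$, so each breakpoint $x$ of an iterate $T^N$ (where $T^j(x)=c$ for some minimal $j<N$) is a genuine turning point: $T^{j+1}$ has a local extremum at $x$, and $T^{N-j-1}$ is affine near $c_1=T^{j+1}(x)$ because $c_1$ is not a precritical point of level $<N-j-1$ (that would make $c$ (pre)periodic). Hence $T^N$ is affine with slope $\pm s^N$ on each interval of monotonicity; applying this to $T^N|_{[c,z_N]}$, which by Definition~\ref{def:ccp} is monotone onto $[c_N,c]$, gives the identity $|z_N-c|=s^{-N}|c_N-c|$. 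Thus $\theta_N>|z_N-c|$ is equivalent to $s^N\theta_N>|c_N-c|$, i.e.\ to the statement that no point of the orbit segment $c_1,\dots,c_N$ lies strictly between $\hat z_N$ and $z_N$. Since $|c_N-c|\le\frac{s-1}{2}$ (as $c_N\in[c_2,c_1]$), it suffices to exhibit infinitely many levels $N$ carrying a closest precritical point — these are the cutting times of $T$, of which there are infinitely many; for $s\le\sqrt2$ one first passes to a renormalization via Lemmas~\ref{lem:renorm} and~\ref{lem:reduce} — for which $\theta_N>\frac{s-1}{2}\,s^{-N}$ suffices, and more generally $|c_N-c|<s^N\theta_N$.

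\textbf{The non-recurrent case.} If $\delta:=\inf_{i\ge 1}|c_i-c|>0$, then $\theta_N\ge\delta$ for every $N$, while $|z_N-c|=s^{-N}|c_N-c|\le\frac{s-1}{2}s^{-N}\to 0$ along the cutting times. Hence $\theta_N>|z_N-c|$ for every sufficiently large cutting time $N$, and we are done.

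\textbf{The recurrent case.} Assume $\theta_N\to 0$ and let $n_1<n_2<\dots$ be the successive ``record'' return times, those $n$ with $|c_n-c|<|c_i-c|$ for all $i<n$; write $\theta^{(j)}:=|c_{n_j}-c|=\theta_{n_j}$, so that $\theta_N=\theta^{(j)}$ for $n_j\le N<n_{j+1}$. The key point is that immediately after a record return the orbit must stay away from $c$ for a long time: since $|c_{n_j}-c|=\theta^{(j)}$, the segment $c_{n_j},c_{n_j+1},\dots$ shadows $c,c_1,c_2,\dots$ with the error multiplied by $s$ at each step, so $c_i$ cannot re-enter the $\theta^{(j)}$-neighbourhood of $c$ before this shadowing breaks down, which takes of order $\log_s(1/\theta^{(j)})$ steps; hence $n_{j+1}>\log_s\!\big(\tfrac{s-1}{2\theta^{(j)}}\big)$ for all large $j$. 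On the other hand $|z_N-c|\le\frac{s-1}{2}s^{-N}$, so $|z_N-c|<\theta^{(j)}$ as soon as $N>\log_s\!\big(\tfrac{s-1}{2\theta^{(j)}}\big)$. Therefore any cutting time $N$ with $\log_s\!\big(\tfrac{s-1}{2\theta^{(j)}}\big)<N<n_{j+1}$ satisfies $n_j\le N<n_{j+1}$ and hence $\theta_N=\theta^{(j)}>|z_N-c|$; such cutting times exist for infinitely many $j$, which yields infinitely many admissible $N$.

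\textbf{Main obstacle.} The delicate part is entirely in the recurrent case: one must make the ``escape time'' lower bound on $n_{j+1}$ precise (a shadowing/expansivity estimate, in the spirit of the length bounds for link-symmetric arcs developed in this section) and then guarantee that the window $\big(\log_s(\tfrac{s-1}{2\theta^{(j)}}),\,n_{j+1}\big)$ actually contains a cutting time, which amounts to comparing the gaps between consecutive cutting times with the depth $\theta^{(j)}$ of the returns. Equivalently, one may argue by contradiction: if only finitely many $N$ worked, then $\theta_N\le\frac{s-1}{2}s^{-N}$ for all large cutting times $N$, so the critical orbit would make infinitely many ``super-close'' returns $|c_{i_k}-c|<s^{-i_k}$, forcing the kneading sequence of $c$ to have arbitrarily long initial blocks that are periodic with period $\approx i_k$; a Fine--Wilf argument applied to two such blocks then forces the kneading sequence to be eventually periodic, contradicting that $c$ is not (pre)periodic.
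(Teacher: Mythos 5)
Your reduction to the orbit condition via $|z_N-c|=s^{-N}|c_N-c|$ is correct, and the non-recurrent case is handled the same way the paper does (trivially). The problem is the recurrent case, which is the entire content of the lemma: there your argument rests on two assertions that you flag under ``main obstacle'' but never establish, namely the escape-time bound $n_{j+1}>\log_s\bigl(\tfrac{s-1}{2\theta^{(j)}}\bigr)$ and the existence of a cutting time inside the window $\bigl(\log_s(\tfrac{s-1}{2\theta^{(j)}}),\,n_{j+1}\bigr)$. Neither is a routine consequence of the shadowing picture you invoke. The estimate $|c_{n_j+l}-c_l|=s^l\theta^{(j)}$ holds only until $T^l$ fails to be monotone on the interval between $c$ and $c_{n_j}$, i.e.\ until some closest precritical point $z_r$ ($r\le l$) enters $(\hat c_{n_j},c_{n_j})$ --- and such $z_r$ can be arbitrarily close to $c$, so monotonicity can fail early. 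Moreover even where the shadowing holds, $|c_{n_j+l}-c|\ge|c_l-c|-s^l\theta^{(j)}$ gives $\ge\theta^{(j)}$ only when $|c_l-c|\ge(1+s^l)\theta^{(j)}$; what you actually know for $l<n_j$ is $|c_l-c|\ge\theta^{(j-1)}$, which bounds $n_{j+1}-n_j$ in terms of $\theta^{(j-1)}/\theta^{(j)}$, not $n_{j+1}$ absolutely. Telescoping plausibly recovers something like your bound, but that (and the density of cutting times in the window) is precisely what needs to be proved, and it is not. The Fine--Wilf alternative at the end is a one-sentence sketch and does not close the gap either.

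The paper avoids both of these issues by a direct combinatorial argument at a single record return time $n$ with $|c_n-c|=\theta_n$. It splits according to whether $x\mapsto|T^n(x)-c|$ has a local maximum or a local minimum at $c$. In the maximum case it shows $T^n([c,c_n])\owns c$ (otherwise $T^n$ would map $[c,c_n]$ monotonically into itself, impossible for an expanding map), which forces $z_n\in(\hat c_n,c_n)$, so $N=n$ already works. In the minimum case it takes $m$ minimal with $z_m\in[\hat c_n,c_n]$ and proves, by a short contradiction argument using minimality of $m$ and expansion, that $c_j\notin[\hat z_m,z_m]$ for $n<j\le m$, so $N=m$ works. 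This sidesteps any quantitative escape-time estimate and any claim about where cutting times fall; it is exactly the ``delicate part'' you identified but left open. So while your plan points in a sensible direction, as it stands it is not a proof of the lemma.
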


\begin{proof}
If $c$ is not recurrent, then $\theta_n \not\to 0$ and the lemma is trivial. So let us assume that
$c$ is recurrent, but obviously not periodic. Let $n$ be such that $|c_n-c| = \theta_n$.

If $x \mapsto |T^n(x)-c|$ has a local maximum at $c$, then $T^n([c,c_n]) \owns c$. Indeed, if this
were not the case, then by the choice of $n$, $T^n$ maps $[c,c_n]$ in a monotone
fashion into $[c,c_n]$, which is clearly impossible for tent maps with slope $> 1$. So in this case,
$z_n \in [c_n, \hat c_n]$ and the lemma holds with $N = n$.

So assume now that $x \mapsto |T^n(x)-c|$ has a local minimum at $c$. Take $m \in \N$ minimal such that
the closest precritical $z_m \in [\hat c_n,c_n]$. We will show that $c_j \notin [\hat z_m , z_m]$ for
$n < j \leq m$. If $j = m$, then $x \mapsto |T^j(x)-c|$ has a local
maximum at $c$, and we can argue as above.
So assume by contradiction that $c_j \in [\hat z_m , z_m]$ for some $n < j < m$. If $x \mapsto |T^j(x)-c|$
has a local maximum at $c$, then the closest precritical point $z_j$ satisfies
$T^j([c,z_m]) \subset T^j([c, z_j]) = [c_j,c] \subset [\hat z_m, c]$ or $[c, z_m]$. This implies that either
$[c,z_m]$ or $[\hat z_m,c]$ is mapped monotonically into itself by $T^j$, which is impossible. The remaining
possibility is that $x \mapsto |T^j(x)-c|$ has a local minimum at $c$. In this case, $T^{j-n}$ maps
$[z_m, c_n]$ monotonically onto $[w, c_j]$. If $c \in (w, c_j)$, then $m \in \N$ cannot be minimal such that
$T^m([c,c_n]) \owns c$. If $c \notin (w, c_j)$, then $w \in [\hat c_n, c_n] \cap T^{(j-n)-m}(c)$, and since
$-m < (j-n)-m < 0$, $m$ is again not minimal such that $T^m([c,c_n]) \owns c$.

Take $N = m$ and the lemma follows.
\end{proof}

Take $N_0$ as in Lemma~\ref{lem:nbh} and so large that $s^{N_0}
> 100$. Let $N \geq N_0$ from Lemma~\ref{lem:nbh} be so large that
\begin{equation}\label{eq:delta0}
\delta := |z_N - c| < |z_{N_0} - c|/100.
\end{equation}
Then $|c_n-c| \geq s^n |z_n-c| \geq s^{N_0}|z_N-c| > 100 \delta$
for every $N_0 \leq n \leq N$ by the choice of $N_0$ and
$|c_n-c| > |z_{N_0}-c| > 100\delta$ for $n \leq N_0$ by the choice of $N$.

\begin{lem}\label{lem:Jdelta}
Given $\delta$ as in \eqref{eq:delta0}, there exists $r_0 = r_0(\delta)$ such that for every interval
$\tilde J$ with $|\tilde J| \geq 22\delta$, there exist $l \leq r_0N$ and an interval $J$ with
$|J| \geq 18\delta$ and concentric with $\tilde J$, such that $T^l_s|_J$ is  monotone and
$J_l := T^l(J) \supset [c-\delta, c+\delta]$.
\end{lem}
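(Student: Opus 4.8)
The statement to prove is Lemma~\ref{lem:Jdelta}: given an interval $\tilde J$ with $|\tilde J| \geq 22\delta$, we must find a bounded iterate ($l \leq r_0 N$) that maps a slightly shrunk concentric subinterval $J$ ($|J| \geq 18\delta$) monotonically onto something containing the $\delta$-neighborhood of $c$. The underlying principle is the standard expansion/pull-back argument for interval maps: under a map with slope $>1$, intervals grow under iteration until they hit the critical point, and we want to control both how many iterates this takes and how much of $\tilde J$ survives as a monotone domain. The key quantitative input is the estimate derived just before the lemma from \eqref{eq:delta0}: all $|c_n - c| > 100\delta$ for $n \leq N$, which gives us a safety margin that guarantees forward images stay away from $c$ long enough.

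First I would iterate $\tilde J$ forward by $T_s$ and track its length. As long as $T_s^j(\tilde J)$ does not contain $c$, $T_s$ is monotone on it and $|T_s^{j+1}(\tilde J)| = s |T_s^j(\tilde J)|$, so the length grows geometrically from $|\tilde J| \geq 22\delta$. Hence after some bounded number $l_0$ of steps (bounded because $s^{l_0} \cdot 22\delta$ must exceed a fixed fraction of $|[c_2,c_1]|$), the image $T_s^{l_0}(\tilde J)$ either contains $c$ or has become long. I need the first time $l$ that $c \in T_s^l(\tilde J)$; at that moment $T_s^l(\tilde J)$ is a long interval straddling $c$. The point of the hypothesis $|\tilde J| \geq 22\delta$ versus the conclusion $|J| \geq 18\delta$ is that I do not iterate $\tilde J$ itself but a concentric subinterval: I want to choose the monotone branch. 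Since $c$ lies in $T_s^l(\tilde J)$, it splits $\tilde J$ into two concentric-ish pieces whose $T_s^l$-images are both intervals with an endpoint at $c$; at least one of them, say with image on the longer side, has length close to $|T_s^l(\tilde J)|$. Pulling back, one of the two halves of $\tilde J$ around its own turning preimage has length at least, say, $9\delta$ on each side — here I would use that $22\delta$ is comfortably more than $2 \times 9\delta$, absorbing the small distortion — giving a concentric $J$ of length $\geq 18\delta$ with $T_s^l|_J$ monotone and $T_s^l(J)$ an interval with endpoint $c$ and length at least a definite constant, certainly $\geq 2\delta$, hence $\supset [c-\delta, c+\delta]$ after possibly one more careful choice.

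The main obstacle, and where the estimate from \eqref{eq:delta0} is really needed, is bounding $l$ by $r_0 N$ rather than just "some constant depending on $\delta$". The naive bound depends on $\delta$ through $s^l \gtrsim 1/\delta$, i.e. $l \gtrsim \log(1/\delta)/\log s$, and since $\delta = |z_N - c| \asymp s^{-N}$, this is already of order $N$ — so $l \leq r_0 N$ is exactly the right order, but I must check the constant $r_0$ can be chosen uniformly, independent of $N$. The subtlety is that the geometric growth $|T_s^{j+1}| = s|T_s^j|$ can be interrupted: if $T_s^j(\tilde J)$ contains $c$ before the length reaches the threshold needed to force $c \in T_s^l(\tilde J)$ with a long image, I may need to restart the argument on a subinterval. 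I would handle this by a finite induction: each time the image meets $c$ prematurely (i.e. while still short), pass to the larger sub-half, which still has definite length, and continue; since lengths are bounded below by $\sim 9\delta$ and the ambient interval has fixed length, and since each "meeting with $c$" forces the image to be at a point $c_i$ with $i$ at most the current iterate count, the margin $|c_i - c| > 100\delta$ (for $i \le N$) guarantees that once the image is, say, within $\delta$ of $c$ it actually overshoots $c$ by a definite amount — so only finitely many (in fact $O(1)$, or at worst $O(N)$) such restarts occur before we are past all the troublesome $c_i$, $i \le N$. Carefully accounting gives $l \leq r_0 N$ for a universal $r_0$. I would also double-check the edge case $0 \in J$ or $0$ near the forward orbit, using that $T_s$ fixes $0$ and the core $[c_2,c_1]$ is forward invariant, so the argument can be run inside the core without the endpoint causing trouble.
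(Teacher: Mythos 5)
Your estimate that the first hitting time $m$ with $c \in \tilde J_m$ is $O(N)$ is sound (geometric growth of a length $\geq 22\delta \asymp s^{-N}$), and matches the paper. But the heart of the lemma is what to do when $c$ lands $\delta$-close to $\partial \tilde J_m$, and there your plan has a genuine gap. Your restart mechanism --- ``pass to the larger sub-half'' of $\tilde J$ --- produces a subinterval that is \emph{not} concentric with $\tilde J$, which is a hard requirement of the statement: the preimage of $c$ under $T^m|_{\tilde J}$ splits $\tilde J$ at a point that in general is nowhere near the center $x$, so neither sub-half can serve as $J$. Shrinking concentrically by a few $\delta$ does not rescue this: if $c$ sits within $\delta/2$ of $\partial \tilde J_m$, trimming $2\delta$ off each end of the image pushes $c$ outside it altogether rather than into the middle. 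So the step you flag as ``one more careful choice'' is exactly the missing idea.

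The paper's device is different: shrink $\tilde J$ concentrically to $J'$ by just enough that $\partial J'_m$ lands \emph{exactly} on $c$. Then $J'_m$ has $c$ as an endpoint, so $T$ is still monotone there, and one iterates the \emph{image} forward (not restarts on a sub-half of the domain). Lemma~\ref{lem:nbh} together with the estimate $|c_i - c| > 100\delta$ for $1\le i\le N$ (established just after \eqref{eq:delta0}) pins down the new endpoint as $c_{m'-m}$ with $m'-m\le N$, hence far from $c$; at most one further concentric shrink to $J''$ handles the remaining endpoint, giving $l \le m + 2N$ and total shrinkage far less than $4\delta$. Your hedge ``$O(1)$, or at worst $O(N)$ restarts'' is symptomatic of the gap: without tracking where the endpoints land (they follow the orbit of $c$), you have no bound at all on the number of stages and hence no proof of $l \leq r_0 N$, nor of $|J| \geq 18\delta$.
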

\begin{proof}
Let $x$ be the center of $\tilde J$ and take $m \geq 0$
minimal such that $\tilde J_m \owns c$; hence $T^m|_{\tilde J}$ is
monotone.

Clearly, $m \leq (r_0-1)N$ for some $r_0 \geq 1$ depending only on
$\delta$. If $\bd \tilde J_m$ is $\delta$-close to $c$, then we take
$J' \subset \tilde J$ centered at $x$ and slightly smaller such that
$c \in \partial J'_m$ and $m' > m$ minimal such that $J'_{m'}$
contains $c$ in its interior. Since $|J'_m| > 20\delta$, it contains
$z_N$ or $\hat z_N$ as in \eqref{eq:delta0}, and $m'-m \leq N$
and $|c_{m'-m}-c| \geq \delta$ by Lemma~\ref{lem:nbh}.

If at iterate $m'$ the other boundary point of $J'$ is $\delta$-close to $c$,
then $m'-m < N$. We take the interval $J'' \subset J'$ centered at $x$ slightly
smaller such that $c \in T^{m'}(\bd J'')$ and take $m'' > m'$ minimal such that
$c$ is an interior point of $T^{m''}(J'')$.
Since $T^{m'}(z_N) \in T^{m'}(J'')$, and by \eqref{eq:delta0} again, $m \leq
m' \leq m'' \leq m+N$ and $\bd J''_{m''}$ is not $\delta$-close to
$c$. In each case, there is $l \leq r_0 N$ and $J \in \{ \tilde J,
J', J''\}$ so that the lemma holds.
\end{proof}

For interval $H =: [a,b]$ with center $x$ we formulate the following property:
\begin{equation}\label{eq:H}
c \in H \text{ and } \delta < \min \{ |c-a|,  |c-b|, |c-x|\}.
\end{equation}

\begin{pro}\label{prop:max_link_symmetry}
Assume that $s \in [1, 2]$ is such that $c$ is not (pre)periodic.
There exists $\eps > 0$ such that if $H$ satisfies \eqref{eq:H},
then $T^n|_H$ is not $\eps$-symmetric for any $n \in \N_0$.
\end{pro}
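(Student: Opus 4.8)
The plan is to argue by contradiction, with $\eps>0$ a small constant to be pinned down only at the very end (it will depend solely on the already-fixed data $\delta$, $N$, $r_0$ and $s$). Suppose then that $T^n|_H$ is $\eps$-symmetric for some $n\in\N_0$ and some $H=[a,b]\subset[0,c_1]$ with center $x$ satisfying \eqref{eq:H}. The case $n=0$ is immediate: $\eps$-symmetry of the identity on $H$ would force $b-a<\eps$, whereas \eqref{eq:H} gives $b-a>2\delta$, so any $\eps\le 2\delta$ already rules it out. Assume from now on that $n\ge1$. Since $c$ is not (pre)periodic, $c_k\ne c$ for every $k\ge1$, so $c_1$ is not a turning point of $T^{n-1}$ and hence $T^n$ has a \emph{genuine} fold at $c$: there is $\sigma\in\{\pm1\}$ with $T^n(c\pm t)=c_n-\sigma s^n t$ for all $0\le t<d_0$, where $d_0$ denotes the distance from $c$ to the nearest turning point of $T^n$ other than $c$ itself. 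Without loss of generality $c<x$; set $\check c:=2x-c$. Then $\check c\in(x,b)$, $\check c-c=2(x-c)>2\delta$, and an elementary computation with $x=\tfrac{a+b}{2}$ gives $\min\{\check c-a,\,b-\check c\}=\min\{\check c-a,\,c-a\}>\delta$, so $\check c$ sits comfortably inside $H$.

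The next step is to transport the fold structure near $c$ across to $\check c$ using $\eps$-symmetry about $x$. Since $T^n$ is piecewise linear with all slopes equal to $\pm s^n$, its graph near $\check c$ must be $O(\eps)$-close to the mirror image of its graph near $c$; because the fold at $c$ is genuine — the graph there is exactly a tent with apex $c_n$ — this forces $T^n$ to fold near $\check c$ as well. Concretely, one obtains a genuine turning point $y$ of $T^n$ with $|y-\check c|$ small, of some precritical order $j\ge1$, so that $T^j(y)=c$, $T^n(y)=c_{n-j}$, and consequently $|c_{n-j}-c_n|=O(\eps)$ and $T^j(\check c)$ lies within $O(\eps\,s^{-(n-j)})$ of $c$. (When the turning point of $T^n$ nearest to $c$ is itself very close to $c$, the fold at $c$ is only valid on a tiny interval, and one must instead match the whole cascade of nested closest precritical points flanking $c$ against a mirrored cascade near $\check c$; here the orbit-separation $|c_i-c|>100\delta$ for $i\le N$ is precisely what keeps that cascade under control and eliminates the degenerate alternatives.) In any case $y\ne c$, since $|y-c|$ is close to $\check c-c>2\delta$.

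Now I would invoke the observation recorded just after Definition~\ref{def:eps-periodic}: $T^n|_H$ is $\eps$-symmetric about the two centers $x$ and $c$ — the latter because of the genuine fold at $c$ — and these are $|x-c|>\delta$ apart, so composing the two reflections shows $T^n$ is $O(\eps)$-periodic of period $2|x-c|>2\delta$ on a neighborhood of $c$; this periodicity principle will be applied several times. The endgame is to push this information forward a \emph{bounded} number of iterates and collide it with the critical-orbit separation supplied by Lemma~\ref{lem:nbh} and \eqref{eq:delta0} (namely $|c_i-c|>100\delta$, and hence also $|z_i-c|\ge\delta$, for all $0<i\le N$). Concretely: $T$ is monotone on $[c,b]\subset[c,c_1]$ and $b-c>2\delta$, so, because $|c_i-c|>100\delta$ for $i\le N$ keeps $c$ out of the iterates, $T^i([c,b])$ stays monotone until its length first reaches $22\delta$, which happens at some time $k_1\le N$. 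Applying Lemma~\ref{lem:Jdelta} to $\tilde J:=T^{k_1}([c,b])$ yields $l\le r_0N$ and a concentric subinterval whose pull-back $J'\subset[c,b]\subset H$ satisfies: $T^{k_1+l}|_{J'}$ is monotone and $T^{k_1+l}(J')\supset[c-\delta,c+\delta]$. Carrying the $O(\eps)$-periodicity and the two matched folds through these at most $(r_0+1)N$ iterates forces an iterate $c_m$, or a closest precritical point of order $\le N$, to land within $O(\eps\,s^{(r_0+1)N})$ of $c$. Since $N$, $\delta$, $r_0$, $s$ are all fixed, it now suffices to choose $\eps$ so small that $O(\eps\,s^{(r_0+1)N})<\delta$: this contradicts the separation above, and that $\eps$ is the one asserted in the proposition.

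The step I expect to be the main obstacle is this last one. One must track exactly how the $\eps$-error is amplified — by a factor $s$ per iterate, over $\sim r_0N$ iterates, so $\eps$ has to be chosen exponentially small in $N$ (harmless, as $N$ is fixed) — while at the same time verifying that every iterate in sight stays monotone and that the centers of the various symmetries line up with closest precritical points. It is exactly here that the separation $\theta_N>\delta$ from Lemma~\ref{lem:nbh} and the ``blow-up'' provided by Lemma~\ref{lem:Jdelta} do the real work, and the repeated use of the two-centers-give-periodicity principle, together with the bookkeeping of precritical-point levels, is the technical heart of the proof; everything before it is the soft reduction sketched above.
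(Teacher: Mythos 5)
There is a genuine gap: your argument is missing the induction on $n$ that is the structural backbone of the paper's proof, and without it the endgame you sketch does not close. The paper establishes Proposition~\ref{prop:max_link_symmetry} by induction on $n$ via the hypothesis (IH$_n$); the base case $n \le (2+r_0)N$ is handled exactly by the ``orbit-separation collision'' you describe (the choice \eqref{eq:eps1} guarantees $|c_i - c_j| > \eps$ for the bounded range of indices), but for large $n$ the contradiction is obtained not from orbit separation directly but from applying the induction hypothesis (IH$_{n-(k+r)}$), (IH$_{n-(k+j+1)}$), etc., to a newly constructed symmetric subinterval of strictly smaller exponent. Your plan to ``push forward $\le (r_0+1)N$ iterates and force $c_m$ or a closest precritical point of order $\le N$ to land within $O(\eps\, s^{(r_0+1)N})$ of $c$'' cannot work as stated, because after pushing $T^n$ forward $k_1 + l$ iterates you are left with the $\eps$-periodicity of the unbounded iterate $T^{n-(k_1+l)}$; this is a constraint on $T^{n-(k_1+l)}$, not on the first $N$ iterates of $c$, so choosing $\eps$ small in terms of $N$, $\delta$, $r_0$, $s$ alone gives you nothing unless you have a way to reduce the exponent further. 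That reduction is precisely what the induction provides, and it is absent from your proposal.

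A second, related problem is in the periodicity step itself. You compose the $\eps$-symmetry of $T^n|_H$ about $x$ with the exact symmetry of $T^n$ about $c$ and obtain $\eps$-periodicity of period $2|x-c|>2\delta$. This is the \emph{wrong} size of period: the exact symmetry about $c$ only extends to the nearest turning point of $T^n$ (your $d_0$), which for large $n$ can be far smaller than $\delta$, so the interval on which the two reflections can be composed is shorter than a single period, and the ``periodicity'' carries no information. In the paper (Lemma~\ref{lem:eps2} and Cases II, IV) the two centers are always arranged to be \emph{$\delta$-close} (\eg $c_{k+l}$ and $c$ when condition~\eqref{eq:H} fails for $J_l$), which gives a period $2\eta\le 2\delta$ and many full periods inside an interval of length $\ge 18\delta$ — exactly what is needed to locate a closest precritical point $z_r$ at a controlled distance from a periodic center and produce a new interval $H_r$ satisfying~\eqref{eq:H}, to which the induction hypothesis can be applied. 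Your parenthetical remark about ``matching the whole cascade of nested closest precritical points'' gestures at the right phenomenon, but you never carry it out; in the paper it is resolved by the careful decomposition into $H'$, $L$, $R$ (or $R'$) and the case analysis I–IV, each branch of which terminates by invoking an induction hypothesis of lower index, not by a one-shot $O(\eps\, s^{C})$ estimate.
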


\begin{proof}
We will prove Proposition~\ref{prop:max_link_symmetry} using the induction hypothesis:
\begin{equation}
\text{if $H$ satisfies \eqref{eq:H}, then $T^n|_H$ is not
$\eps$-symmetric. }\tag{IH$_n$}
\end{equation}
Take $N_0$, $N$ and $\delta$ as in \eqref{eq:delta0},
$r_0$ as in Lemma~\ref{lem:Jdelta} and $H$ that satisfies \eqref{eq:H}.

Let $\eps \in (0,\delta)$ be so small that
\begin{equation}\label{eq:eps1}
\eps < \min\{ |c_i - c_j| : 0 \leq i < j \leq (2 + r_0)N  \}.
\end{equation}

Since $c$ lies off-center in $H$ by at least $\delta$, by the choice of $\eps$, (IH$_k$) holds for all
$k \leq (2+r_0)N$.
Assume now that (IH$_j$) holds for all $j < n$. We will prove (IH$_n$), but first, continuing with
the interval $\tilde J$ of Lemma~\ref{lem:Jdelta}, we prove the following lemma.

\begin{lem}\label{lem:eps2}
Let $\tilde J$ be an interval of length $|\tilde J| \geq 22\delta$ centered at $c_k$ for some
$1 \leq k \leq 2N$. If $T^j|_{\tilde J}$ is $\eps$-symmetric for some $0 \leq j \leq n$, then
the interval $J_l := T^l(J)$ from Lemma~\ref{lem:Jdelta} satisfies condition \eqref{eq:H}.
\end{lem}
\begin{proof}
We know already from Lemma~\ref{lem:Jdelta} that $J_l \supset
[c-\delta, c+\delta]$. Hence if \eqref{eq:H} fails, then $\eta :=
|c_{k+l}-c| \leq \delta$. Since $T^l|_J$ is monotone, $j > l$.
Therefore $T^{j-l}|_{J_l}$ is $\eps$-symmetric around $c_{k+l}$ and
symmetric around $c$, and it follows that $T^{j-l}|_{J_l}$ is
$\eps$-periodic with period $2\eta$. Indeed, by symmetry around $c$,
$T^{j-l}_{J_l}$ is $\eps$-symmetric around the symmetric point $\hat
c_{k+l}$. Hence $T^{j-l}_{J_l}$ must also be $\eps$-symmetric around
the points $c\pm 2\eta$, which are the reflections of $c$ in
$ c_{k+l}$ and $\hat c_{k+l}$, etc. Extending these symmetries, we see that
$|T^{j-l}(t) - T^{j-l}(t+2\eta)| < \eps$ for all $t, t+2\eta \in J_l$,
so $T^{j-l}|_{J_l}$ is $\eps$-periodic with period $2\eta$.
Even more, $T^{j-l}|_{J_l}$ is $\eps$-symmetric around $c+2i\eta$ on every separate subarc
$P_i := [c+(2i-1)\eta, c+(2i+1)\eta] \subset J_l$.

Recall that $1\leq k \leq 2N$ and $l \leq r_0N$, so
we have $\eta > \eps$ by the choice of $\eps$ in \eqref{eq:eps1}.
Since $|J_l| \geq 18\delta = 18|z_N-c|$, one of the components
of $J_l\setminus\{ c \}$, say the one containing $z_N > c$, has length $\geq 9\delta$.
We can take $r \leq N$ minimal such that $z_r \in [c+\delta,c+8.9\delta]$.
Take $i \in \Z$ such that if
\begin{equation}\label{eq:choose_i}
z_r \in \left\{ \begin{array}{ll}
(c,c+ 4.3\delta ], & \text{ then }
 c+2i\eta \in (z_r+0.1\delta, z_r+2.1\delta), \\[2mm]
(c+4.3\delta, c+8.9\delta ], & \text{ then } c+2i\eta \in
(z_r-2.1\delta, z_r-0.1\delta).
\end{array} \right.
\end{equation}
Let $H \subset J_l$ be the longest interval centered at $x :=
c+2i\eta$ on which $T^r|_H$ is monotone. Then $H \owns z_r$, and
$T^{j-l}|_H$ and $T^{j-l-r}|_{H_r}$ are $\eps$-symmetric. We
will show that $H_r$ satisfies $\eqref{eq:H}$. Indeed,
since $|z_r-c| \leq 9\delta < |z_{N_0} - c|/10$ (so $r > N_0$)
by \eqref{eq:delta0} and $|x-z_r| \geq \delta/10$,
we have $|x_r - c| = s^r|x-z_r| \geq 2^{N_0/2}\delta/10 > \delta$.
If $|z_r - \bd H| \geq \delta/10$, then
$|c-\bd H_r|  > \delta$ for the same reason.
If on the other hand there is a point $y \in \bd H$ such that
$|y-z_r| < \delta/10$, then $y$ has to be a precritical point.
By the choice of $r$, $y = z_{r'} \in
(c+8.9\delta, c+9\delta]$ for some $r' < r$.
By the choice of $N$ and Lemma~\ref{lem:nbh},
$|y_r-c| = |c_{r-r'}-c| \geq \delta$.

This shows that $H_r$ satisfies $\eqref{eq:H}$, but also
$T^{j-l-r}|_{H_r}$ is $\eps$-symmetric around $x_r$, and this
contradicts (IH$_{j-l-r}$), proving this lemma.
\end{proof}

Combining the induction hypothesis (IH$_n$) and Lemma~\ref{lem:eps2}, we have proved the
following stronger property.

\begin{cor}\label{cor:eps3}
If $\tilde J$ is centered at $c_k$ for some $1 \leq k \leq 2N$ and $|\tilde J| \geq 22\delta$,
then $T^j|_{\tilde J}$ is not $\eps$-symmetric for $j \leq n$.
\end{cor}

Now we continue the induction on $n$ and assume by contradiction
that $T^n|_H$ is $\eps$-symmetric for some $H$ satisfying
\eqref{eq:H} and for $\eps$ satisfying \eqref{eq:eps1}.
Let $[a',b'] := H' \subset H$ be centered around $x$ such that $c \in
\partial H'$. Assume without loss of generality that $c =  a'$ is
the left endpoint of $H'$, and let $L$ and $R$ be intervals of
length $\delta$ at the left and right side adjacent to $H'$. Since
$|H'| \geq \delta$, so $H' \owns z_N$ or $\hat z_N$, there is $0 < k
\leq N$ minimal such that $c \in H'_k$.
Clearly $|H'_k| > |L_k| = |R_k| \geq 100\delta$.
We distinguish four cases:\\[1mm]
{\bf Case I:}
$H'_k$ satisfies \eqref{eq:H}. Then by (IH$_{n-k}$),
$T^{n-k}|_{H'_k}$ cannot be $\eps$-symmetric,
and neither can $T^n|_{H'}$ or $T^n|_H$.\\[1mm]
\begin{figure}[ht]
\unitlength=7mm
\begin{picture}(20,11.3)(0,-0.5)
%
%
\put(0.5,10){$\overbrace{\qquad\qquad\qquad\qquad\qquad\qquad\qquad}$}
\put(4.4,10.55){$H$}
\thicklines
\put(1, 9){\line(1,0){1.5}}\put(1, 9.03){\line(1,0){1.5}}\put(1.5,9.2){$L$}
\put(4.3,9.3){$H'$}
\put(6.5, 9){\line(1,0){1.5}}\put(6.5, 9.03){\line(1,0){1.5}}\put(7,9.2){$R$}
\thinlines
\put(0.5, 9){\line(1,0){8.3}}
\put(2.5, 8.9){\line(0,1){0.2}}\put(1.7,8.4){$a'=c$}
\put(4.5, 8.9){\line(0,1){0.2}}\put(4.3,8.4){$x$}
\put(6.5, 8.9){\line(0,1){0.2}}\put(6.3,8.4){$b'$}
\put(1,7.5){\vector(0,-1){1}}\put(0,7){$T^k$}
\thicklines
\put(2.5, 6){\line(3,-1){1.5}}\put(2.5, 6.03){\line(3,-1){1.5}}\put(3,5.1){$L_k$}
\put(4.3,6.3){$H'_k$}
\put(6.5, 6){\line(1,0){1.5}}\put(6.5, 6.03){\line(1,0){1.5}}\put(7,6.2){$R_k$}
\thinlines
\put(2.5, 6){\line(1,0){4}}
\put(2.5, 5.9){\line(0,1){0.2}}\put(2.3,5.4){$c_k$}
\put(4.5, 5.9){\line(0,1){0.2}}\put(4.3,5.4){$x_k$}
\put(6.5, 5.9){\line(0,1){0.2}}\put(6.3,5.4){$b'_k$}
\put(1,4.5){\vector(0,-1){1}}\put(0,4){$T$}
\thicklines \put(0.5, 3.2){\line(5,-1){2}}\put(0.5,
3.23){\line(5,-1){2}}\put(1.2,3.5){$R_{k+1}$} \put(2.5,
0.8){\line(5,-1){2}}\put(2.5,
0.83){\line(5,-1){2}}\put(3.4,-0.2){$L_{k+1}$} \thinlines
\put(1.5,3){\line(5,-1){6}} \put(7.5, 1.8){\line(-5,-1){5}}
\put(2.5, 0.7){\line(0,1){0.2}}\put(1.8,0.3){$c_{k+1}$} \put(2.5,
0.7){\line(0,1){0.2}}\put(1.8,0.3){$c_{k+1}$} \put(6.8,
1.8){\line(0,1){0.2}}\put(6.3,2.4){$x_{k+1}$} \put(2.5,
2.6){\line(0,1){0.2}}\put(2.5,3){$c_{k+1} \approx b'_{k+1}$}
\put(7.6,2){$c_1$}
%
%
\put(10.5,10){$\overbrace{\qquad\qquad\qquad\qquad\qquad\qquad\qquad}$}
\put(14.4,10.55){$H$}
\thicklines
\put(11, 9){\line(1,0){1.5}}\put(11, 9.03){\line(1,0){1.5}}\put(11.5,9.2){$L$}
\put(14.3,9.3){$H'$}
\put(16.5, 9){\line(1,0){1.5}}\put(16.5, 9.03){\line(1,0){1.5}}\put(17,9.2){$R'$}
\thinlines
\put(10.5, 9){\line(1,0){8.3}}
\put(12.5, 8.9){\line(0,1){0.2}}\put(11.7,8.4){$a'=c$}
\put(14.5, 8.9){\line(0,1){0.2}}\put(14.3,8.4){$x$}
\put(16.5, 8.9){\line(0,1){0.2}}\put(16.3,8.4){$b'$}
\put(11.3,7.6){\vector(0,-1){1}}\put(9.9,7){$T^{k+1}$}
\thicklines
\put(12.5, 6){\line(3,-1){1.5}}\put(12.5, 6.03){\line(3,-1){1.5}}\put(12.8,5.1){$L_{k+1}$}
\put(13.5,6.4){$H'_{k+1}$}
\put(16.5, 6){\line(-3,1){1.5}}\put(16.5, 6.03){\line(-3,1){1.5}}\put(15.2,6.7){$R'_{k+1}$}
\thinlines
\put(12.5, 6){\line(1,0){4}}
\put(12.5, 5.9){\line(0,1){0.2}}\put(11.7,5.6){$c_{k+1}$}
\put(14.5, 5.9){\line(0,1){0.2}}\put(14.3,5.4){$x_{k+1}$}
\put(16.5, 5.9){\line(0,1){0.2}}\put(16.7,5.6){$c_1 \approx b'_{k+1}$}
\put(11.3,4.6){\vector(0,-1){1}}\put(9.9,4){$T^{j+1}$}
\thicklines
\put(12.5, 0.8){\line(5,-1){4}}\put(12.5, 0.83){\line(5,-1){4}}\put(13.3,-0.3){$L_{k+j+2}$}
\put(11.5, 3){\line(5,1){4}}\put(11.5, 3.03){\line(5,1){4}}\put(12.1,3.9){$R'_{k+j+2}$}
\thinlines
\put(11.5, 3){\line(5,-1){6}}
\put(17.5, 1.8){\line(-5,-1){5}}
\put(12.5, 0.7){\line(0,1){0.2}}\put(11,0.3){$c_{k+j+2}$}
\put(10.2,2.8){$c_{j+2}$}
\put(17.7,2){$c_1 \approx x_{k+j+2}$}
\put(11.45,2.5){$\underbrace{\quad}_{\text{\small \it hook}}$}
\put(11.5,2.7){\dashbox{0.2}(1,0.7)}
\end{picture}
\caption{An illustration of Cases II (left) and IV (right).}
\label{fig:CaseII+IV}
\end{figure}
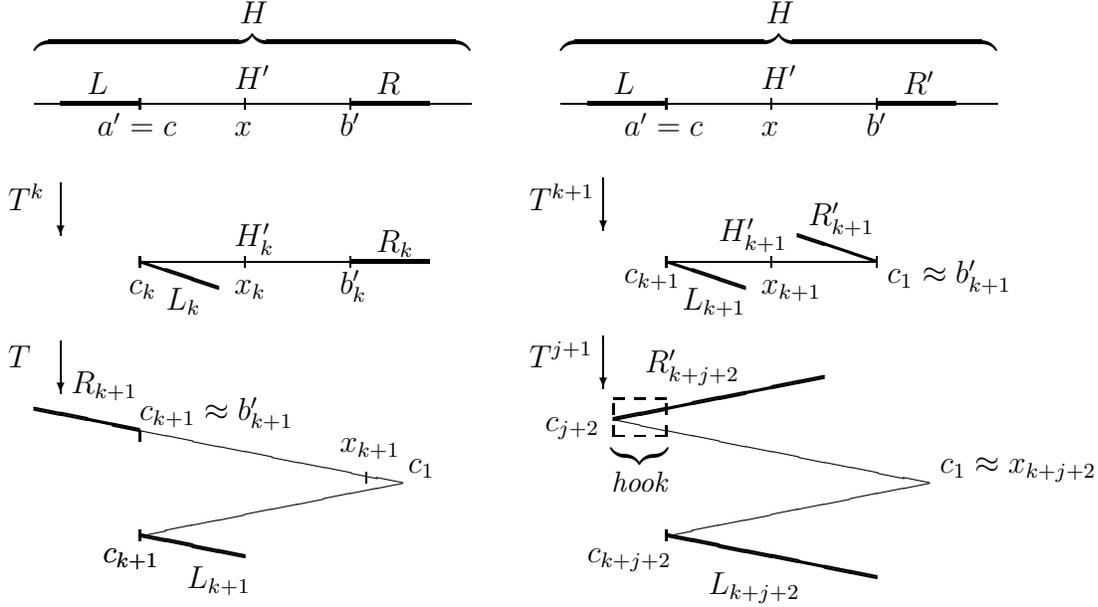
\noindent
{\bf Case II:} $|x_k-c| < \delta$, see Figure~\ref{fig:CaseII+IV} (left).
If the length of the interval $T^{n-k}([x_k,c])$ exceeds $\eps$,
then since $T^{n-k}$ is also symmetric around $c$, $T^{n-k}$
must be $\eps$-symmetric on $H'_k$ both with center $x_k$
and with center $\hat x_k$,
and therefore $\eps$-periodic on $H'$ with period $2\eta := 2|x_k-c|$.
We use the same argument as in the proof of Lemma~\ref{lem:eps2}:
$T^{n-k}$ is $\eps$-symmetric on each interval
$P_i := [c+(2i-1)\eta, c+(2i+1)\eta]$ for each $i \in \Z$ such that
$P_i \subset H'_k$. Since $|H'_k| \geq 100\delta \geq 100 \eta$,
$P_i \subset H'_k$ for at least $-25 \leq i \leq 25$.
Take $r \leq N$ minimal such that $[z_r-\delta/10, z_r+\delta/10] \subset  H'_k$,
and $i \in \Z$ as in \eqref{eq:choose_i}, and
$H'' \subset H'$ such that $H''_k$ is the maximal interval centered
at $c+2i\eta$ on which $T^r$ is monotone.
As before, $T^{n-(k+r)}|_{H''_{k+r}}$ is the $\eps$-symmetric but
$H''_{k+r}$ satisfies \eqref{eq:H}.
This would contradict (IH$_{n-(k+r)}$), so it cannot occur.

If on the other hand the length of $T^{n-k}([x_k,c])$ is less then
$\eps$, then we might as well have chosen $x$ such that $x_k =
c$. This means that the intervals $L_{k+1}$ and $R_{k+1}$  are
adjacent, see Figure~\ref{fig:CaseII+IV} (bottom left). More
precisely, they are adjacent except for an error which does not show
at $\eps$-scale under the iterate $T^{n-(k+1)}$, so by a negligible
adjustment, we can assume that they form an interval of length $\geq
100\delta$ with center $c_{k+1}$. Since $k+1 \leq 2N$, Corollary~\ref{cor:eps3}
implies that $T^{n-(k+1)}|_{L_{k+1} \cup R_{k+1}}$
and hence $T^n|_H$ are not $\eps$-symmetric.
\\[1mm]
{\bf Case III:} $|a'_k-c| < \delta$. Since $k \leq N$, the choice of
$\delta$ renders this impossible.\\[1mm]
{\bf Case IV:} $|b'_k-c| < \delta$, see Figure~\ref{fig:CaseII+IV}
(right). Replace $R$ by the largest interval $R' \subset H \cup R$
with $R'\cap R \neq \emptyset$ such that $c \in \bd R'_k$ and
$T^k|_{R'}$ is monotone. If $c \in \bd R'_l$ for some $0 \leq l <
k$, then $R'_k = [c,c_{k-l}]$, so $|R'_k| \geq \delta$ by
Lemma~\ref{lem:nbh}. Also rename $H' \setminus R'$ to $H'$. Hence
$T^{k+1}|_{L \cup H' \cup R'}$ has three branches, $s \delta \leq
|R'_{k+1}|$ and $100\delta \leq |L_{k+1}| \leq |H'_{k+1}|$.

Let $j > 0$ be minimal such that $T^{k+j+1}(H') \owns c$. If
$H'_{k+j+1} = [c_{k+j+1}, c_{j+1}]$, which is centered at
$x_{k+j+1}$, satisfies \eqref{eq:H}, then we can invoke
(IH$_{n-(k+j+1)}$), so assume that this is not the case. Since $|L|
\geq \delta$, so $L \owns z_N$ or $\hat z_N$, we have $j \leq k+j+1
\leq N$. Therefore both $|c_{j+1}-c| > \delta$ and $|c_{k+j+1}-c|
\geq \delta$.

Thus if \eqref{eq:H} fails, we must have $|x_{k+j+1} - c| < \delta$.
If in the remaining $n-(k+j+1)$ iterates, the arc $[x_{k+j+1} , c]$
grows to length $> \eps$,
then, as in Case II, $T^n|_{H'}$ must contain a large $\eps$-periodic
arc, to which we apply the same argument as in Case II (\ie the argument of Lemma~\ref{lem:eps2}).
The remaining possibility is that $x_{k+j+1}$ is so close to $c$ that on
an $\eps$-scale, we may as well assume that $x_{n+k+1} = c$.

Both $c_{k+j+2} = a'_{k+j+2}$ and $c_{j+2} \approx b'_{k+j+2}$
are local minima of $T^{k+j+2}|_{L \cup H' \cup R'}$,
see Figure~\ref{fig:CaseII+IV} (bottom right).
Assume without loss of generality that
$c_{j+2} < c_{k+j+2}$, so  $R'_{k+j+2}$ has a small extra hook before
joining up with $L_{k+j+2}$.
As we assumed that $T^n|_H$ is $\eps$-symmetric, the effect of this hook
needs to be `$\eps$-repeated' near $a'$ in $L$.
But $L_{k+j+2}$ and $R'_{k+j+2}$ overlap, so
in $R'$, the same effect needs to be $\eps$-repeated next to the first
hook. Continuing this way, we find that $T^{n-(k+j+2)}$ is
$\eps$-periodic over the entire length of $R'_{k+j+2}$.

Take $i$ minimal such that $R'' := T^i(R'_{k+j+2}) \owns c$.
Since $|R'_k| \geq \delta$ we have $j+i+2 < N$, $|R''| \geq 100\delta$ and
$|\bd R'' - c| \geq \delta$.
Therefore $T^{n-(k+j+i+2)}|_{R''}$
is $\eps$-periodic of period $2\eta$, where
the length of the hook after $i$ more iterates is
$\eta := |c_{j+i+2} - c_{k+j+i+2}| > \eps$, because $k+j+i+2 \leq 2N$
and by the choice of $\eps$ in \eqref{eq:eps1}.
If $\eta < 10 \delta < |R''|/10$, then $T^{n-(k+j+i+2)}|_{R''}$
is $\eps$-periodic with at least $5$ adjacent intervals $P$ of
length $2\eta$ around the center of which $T^{n-(k+j+i+2)}|_{R''}$ is
$\eps$-symmetric.
So we can find a new interval $H'' \subset R''$ centered around the center of one of these $P$s such that $H''$ satisfies \eqref{eq:H}.
But this contradicts (IH$_{n-(k+j+i+2)}$).

If $\eta \geq 10\delta$, then we let $H''$ be the arc of length
$22\delta$ centered at $c_{k+j+i+2}$. Again, since $k+j+i+2 \leq
2N$, the iterate $T^{n-(k+j+i+2)}$ cannot be $\eps$-symmetric
on $H''$ by Corollary~\ref{cor:eps3}. But then the assumed
$\eps$-symmetry of $T^n|_H$ does not extend beyond $H'$, and
Case IV follows.
\\[1mm]
This proves the inductive step and hence the proposition.
\end{proof}

Let $\kappa := \min\{ i \geq 3 : c_i \geq c\}$. Then $\kappa < \infty$ provided $1 < s < 2$.
Let $\cdots < c_{-3} < c_{-2} < c_{-1} < c_0 = c$ be the successive
precritical points on the left of $c$ with $T^j(c_{-j}) = c$.
Since $c_{\kappa-1} < c < c_\kappa$, we have $c_{2-\kappa} < c_2 < c_{3-\kappa}$.
Let $\delta = |z_N - c|$ as in \eqref{eq:delta0} be so small (\ie $N$ as in
Lemma~\ref{lem:nbh} so large) that
\begin{equation}\label{eq:delta}
\delta  <\frac1{30}  \min\{ |c_{-1} - c_{-2}|, |c_{-1} - \hat c_1|, |c_2 - c_{2-\kappa}| \},
\end{equation}
where $\hat c_1 = 1-c_1 = 1-s/2$. Assume that $s \in [1,2]$ is such
that $c$ is not (pre)periodic, and take $\eps$ is as in
\eqref{eq:eps1} in the proof of
Proposition~\ref{prop:max_link_symmetry}.

Let $(A_i)_{i \in \N}$ be the sequence of maximal $p$-link-symmetric arcs with center $s_i$ for every
$i \in \N$. Recall that $(s_i)_{i \in \N}$ is the sequence of snappy $p$-points
(see Definition~\ref{df:snappy}) and that $\width(\chain_p) := \max_j |I^j_p|$.
\begin{lem}\label{lem:snappy}
If $\width(\chain_p) < \eps$, then $A_i$ contains exactly $\kappa$ snappy
$p$-points for each $i \geq \kappa-1$, namely $s_{i-\kappa+2}, s_{i-\kappa+3}, \dots, s_{i+1}$, and
$s_{i-\kappa+2}$ is an interior point of $A_i$.
\end{lem}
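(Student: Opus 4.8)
The plan is to pin down $A_i$ through the identification of $[\alpha ,s_{j+1}]$ with $[0,c_1]$ underlying the folding‑pattern identity recalled before Section~\ref{sec:lemmas} — the $p$‑folding pattern of $[\alpha ,s_{j+1}]$ equals the folding pattern of $T^j$ on $[0,c_1]$ — and then simply to count which snappy $p$‑points land inside $A_i$. Throughout I use that $T^n$ is \emph{genuinely} symmetric around $c$, since $T(c+t)=T(c-t)$; hence $T^n$ is symmetric around $c$ on every interval $[c-t,c+t]\subseteq[0,c_1]$, the largest such being $[\hat c_1,c_1]$.

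\emph{Step 1 (combinatorial model and lower bound).} Let $\psi_i:[\alpha ,s_{i+1}]\to[0,c_1]$ be the order‑preserving homeomorphism with $\psi_i(\alpha )=0$, $\psi_i(s_{i+1})=c_1$ coming from the folding‑pattern identification; under it a $p$‑point of $p$‑level $m$ goes to the turning point of $T^i|_{[0,c_1]}$ whose $T^i$‑value is $c_m$, so by the snappy property $\psi_i(s_i)=c$ and $\psi_i(s_{i-l})=c_{-l}=1/(2s^l)$ (the first precritical point of order $l$ from the $0$‑side) for $1\le l\le i$. Since $T^i$ is symmetric around $c$, the arc $A_i':=\psi_i^{-1}([\hat c_1,c_1])$ is $p$‑link‑symmetric with center $s_i$, so $A_i\supseteq A_i'$ by maximality. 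The snappy $p$‑points in $A_i'$ are $s_{i+1}$ (image $c_1$) together with those $s_{i-l}$ with $1/(2s^l)\ge 1-s/2$, i.e. $s^l(2-s)\le1$; using that, by definition of $\kappa$, $c_m<c$ exactly for $2\le m\le\kappa-1$ while $c_\kappa\ge c$, and that $s^l(1-s/2)=c_{l+1}$ for $1\le l\le\kappa-1$, this rewrites as $0\le l\le\kappa-2$, with \emph{strict} inequality (hence interiority) for $1\le l\le\kappa-2$. Thus $A_i$ contains exactly the $\kappa$ points $s_{i-\kappa+2},\dots,s_i,s_{i+1}$ among $A_i'$, with $s_{i-\kappa+2}$ interior.

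\emph{Step 2 (maximality).} It remains to show $A_i$ reaches neither $s_{i+2}$ on the right nor $s_{i-\kappa+1}$ on the left, i.e. that $A_i$ is no larger than $A_i'$ up to a tiny $\eps$‑scale slack. Suppose it over‑extended. Let $k$ be minimal with $A_i\subseteq[\alpha ,s_k]$; by the remark following Definition~\ref{def:eps-periodic}, $\pi_{p+k}$ is injective on $A_i$ and $T^k|_H$ is $\eps$‑symmetric, where $H:=\pi_{p+k}(A_i)$ is centred at $\zeta:=\pi_{p+k}(s_i)=c_{-(k-i)}=1/(2s^{k-i})$, a closest‑type precritical point of order $k-i$ with $T^{k-i}(\zeta)=c$ and $T^l(\zeta)\ne c$ for $0<l<k-i$. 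Over‑extension forces $H$ to \emph{properly} contain the turning point $c_{-(k-i-1)}$ of $T^{k-i}$, so $T^{k-i}|_H$ folds there, runs up to $c_1$, and returns towards $c_2$; one further iterate then produces an interval $H''\ni c$ on which an iterate of $T$ is $\eps$‑symmetric but which satisfies condition~\eqref{eq:H}. Checking \eqref{eq:H} — that $c$ sits at least $\delta$ off‑centre in $H''$, away from both endpoints and from the midpoint — is exactly where \eqref{eq:delta} is used: the separations $|c_{-1}-c_{-2}|$, $|c_{-1}-\hat c_1|$ and $|c_2-c_{2-\kappa}|$ control where the folded image of $H$ can land relative to $c$. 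This contradicts Proposition~\ref{prop:max_link_symmetry}, so no over‑extension occurs and the endpoints of $A_i$ lie as claimed.

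The delicate point — the main obstacle — is Step 2: one must follow precisely how far the folded‑over image of $H$ can reach and verify that it realizes \eqref{eq:H} (rather than landing so close to $c$ that the Proposition does not bite), while controlling the $\eps$‑scale errors intrinsic to ``$p$‑link‑symmetric'' (which only constrains which chain links are visited, not exact values); this is where the choices of $N$, $\delta$, $\eps$ from Lemma~\ref{lem:nbh}, \eqref{eq:delta0} and \eqref{eq:eps1} are essential. A cleaner route worth trying first is an induction on $i$ using the substitution rule relating $FP(T^{i+1}|_{[0,c_1]})$ to $FP(T^i|_{[0,c_1]})$ (raise every level by one, insert the new level‑$1$ entries), which directly links the palindrome around $s_i$ in $A_i$ to the one around $s_{i+1}$ in $A_{i+1}$; but the base case and the two boundary behaviours still rely on Proposition~\ref{prop:max_link_symmetry} together with \eqref{eq:delta}.
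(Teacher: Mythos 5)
Your Step~1 is essentially the paper's lower-bound argument in disguise: the arc you call $A_i'=\psi_i^{-1}([\hat c_1,c_1])$ is exactly the arc $J=[x,s_{i+1}]$ in the paper's proof, on which $\pi_{p+i-1}$ is $2$-to-$1$ onto $[c_2,c_1]$, and your count $\psi_i(s_{i-l})=1/(2s^l)$ correctly reproduces the paper's precritical points $c_{-l}$ and the inequalities $c_{2-\kappa}<c_2<c_{3-\kappa}$ coming from the definition of $\kappa$. So the identification of the $\kappa$ snappy points $s_{i-\kappa+2},\dots,s_{i+1}$ contained in $A_i$ and the interiority of $s_{i-\kappa+2}$ are done correctly.

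Step~2, however, is a genuine gap: you state the right plan (over-extension would make some iterate $\eps$-symmetric on an interval that contradicts the machinery of Section~3), but you stop at the point where the actual verification must happen. You take $H=\pi_{p+k}(A_i)$ centred at $\zeta=1/(2s^{k-i})$ and say that ``one further iterate then produces an interval $H''\owns c$ satisfying \eqref{eq:H}''; but you never construct $H''$, never check that $c$ is at least $\delta$ off-centre and at least $\delta$ from $\partial H''$, and never control the $\eps$-scale slack coming from the fact that $p$-link-symmetry only constrains links. You acknowledge this yourself (``the delicate point --- the main obstacle''). The paper sidesteps this transport entirely: it does not push the interval to one containing $c$ and does not invoke \eqref{eq:H} directly. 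Instead it extends $J$ by arcs $L,R$ whose $\pi_{p+i-1}$-image is an interval $H$ centred at $c_2$ of length $2|c_2-c_{2-\kappa}|\ge 60\delta>22\delta$ by the choice \eqref{eq:delta}, and applies Corollary~\ref{cor:eps3} (which was built precisely for intervals of length $\ge 22\delta$ centred at $c_k$ with $k\le 2N$) to conclude that $T^{p+i-1}|_H$ is not $\eps$-symmetric; hence $A_i\not\supset L\cup J\cup R$ and so $A_i\not\owns s_{i-\kappa+1}$. Until you supply the missing construction and estimates in Step~2 (or switch to the paper's choice of an interval around $c_2$ and cite Corollary~\ref{cor:eps3} rather than condition~\eqref{eq:H}), the proof of the upper bound --- and hence of the ``exactly $\kappa$'' claim --- is incomplete.
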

\begin{proof}
Let $H$ be the interval centered at $c_2$ such that $c$ is the left endpoint of
$H_{\kappa-2} := T^{\kappa-2}(H)$. Then $|H| \geq 22\delta$ by the choice of $\delta$, so by
Proposition~\ref{prop:max_link_symmetry} and Lemma~\ref{lem:eps2} in particular, $T^{p+i-1}|_H$
cannot be $\eps$-symmetric.

\begin{figure}[ht]
\unitlength=8mm
\begin{picture}(16,6)(-1,-1)
\thicklines
\put(3, 4){\line(1,0){10}}\put(3,3.98){\line(1,0){10}}
\put(3, 3.96){\line(1,0){10}}\put(8.3,4.3){$J$}
\put(3, 2){\line(1,0){10}}\put(3,1.98){\line(1,0){10}}
\put(3, 1.96){\line(1,0){10}}
\put(4.5, 0){\line(1,0){8.5}}\put(4.5,-0.02){\line(1,0){8.5}}
\put(4.5, -0.04){\line(1,0){8.5}}
\put(13,-0.145){\oval(0.22,0.22)[r]}
\put(13,-0.145){\oval(0.26,0.26)[r]}
\put(13, -0.14){\oval(0.29,0.29)[r]}
\put(4.5, -0.025){\line(1,0){8.5}}\put(4.5,-0.26){\line(1,0){8.5}}
\put(4.5, -0.29){\line(1,0){8.5}}
\put(13,1.875){\oval(0.26,0.26)[rt]}
\put(13,1.86){\oval(0.25,0.25)[rt]}
\put(13,1.84){\oval(0.24,0.24)[rt]}
\put(4.5,-0.38){\oval(0.25,0.25)[lt]}
\put(4.5,-0.39){\oval(0.24,0.24)[lt]}
\put(4.5,-0.4){\oval(0.23,0.23)[lt]}
\thinlines
\put(1, 4){\line(1,0){14}} \put(2.5,3.9){\line(0,1){0.2}}
\put(1.8,3.5){$s_{i-\kappa+1}$}
\put(3.5,3.9){\line(0,1){0.2}}
\put(3.4,3.5){$s_{i-\kappa+2}$}
\put(5.5,3.9){\line(0,1){0.2}}
\put(5.2,3.5){$s_{i-\kappa+3}$}
\put(8,3.9){\line(0,1){0.2}}
\put(7.9,3.5){$s_i$}
\put(13,3.9){\line(0,1){0.2}}
\put(12.6,3.5){$s_{i+1}$}
\put(2.6,3.85){$($}\put(13.3,3.85){$)$}
\put(2.8,4.2){$L$} \put(13,4.2){$R$}
\put(0,3.8){\vector(0,-1){1.6}}\put(0.2, 2.8){$\pi_{p+i}$}
\put(1, 2){\line(1,0){12}} \put(2.5,1.9){\line(0,1){0.2}}
\put(1.8,1.5){$c_{1-\kappa}$}
\put(3.5,1.9){\line(0,1){0.2}}\put(3.2,1.5){$c_{2-\kappa}$}
\put(5.5,1.9){\line(0,1){0.2}}\put(5.2,1.5){$c_{3-\kappa}$}
\put(8,1.9){\line(0,1){0.2}}\put(7.9,1.5){$c$}
\put(13.15, 1.875){\line(1,0){0.2}} \put(13.5,1.7){$c_1$}
\put(13,1.875){\oval(0.25,0.25)[r]}
\put(12.6, 1.75){\line(1,0){0.4}}
\put(0,1.8){\vector(0,-1){1.6}}\put(0.2, 0.8){$T$}
\put(8.2,1.4){\vector(4,-1){4.3}}
\put(2.5,1.2){\vector(1,-1){0.9}}
\put(1, 0){\line(1,0){12}}
\put(2.5, -0.1){\line(0,1){0.2}}\put(1.8,-0.5){$c_{1-\kappa}$}
\put(3.5, -0.1){\line(0,1){0.2}}
\put(5.5, -0.1){\line(0,1){0.2}}\put(5.2,0.3){$c_{3-\kappa}$}
\put(8, -0.1){\line(0,1){0.2}}\put(7.9,0.3){$c$}
\put(13.15,-0.125){\line(1,0){0.2}}\put(13.5,-0.3){$c_1$}
\put(4.5,-0.375){\oval(0.25,0.25)[l]}
\put(4.5,-0.5){\line(1,0){0.4}}
\put(4.25, -0.375){\line(1,0){0.1}}\put(3.8,-0.5){$c_2$}
\put(3.85,-0.7){$\underbrace{\qquad}_H$}
\end{picture}
\caption{The arc $J$ and its image under $\pi_{p+i}$ and $T \circ \pi_{p+i} = \pi_{p+i-1}$.}
\label{fig:pointsc-j}
\end{figure}
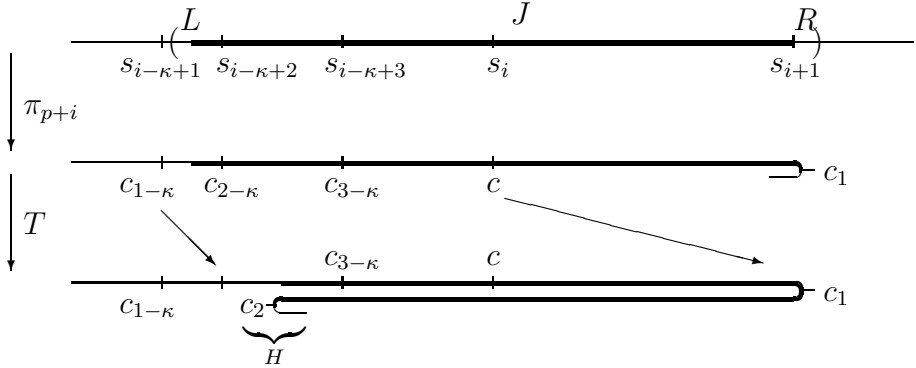

Let $J = [x, s_{i+1}]$ be such that $J \owns s_i$ and $\bar{d}(s_i, s_{i+1}) = \bar{d}(x, s_i)$,
where $\bar{d}$ is defined in Definition~\ref{df:dbar}. Then $\pi_{p+i-1}$ maps $J$ in
a $2$-to-$1$ fashion onto $[c_2,c_1]$, with $\pi_{p+i-1}(s_i) = c_1$ and $\pi_{p+i-1}(s_{i+1}) = c_2$.
Therefore $J$ is $p$-symmetric and also $p$-link-symmetric around $s_i$.
Since $c_{2-\kappa} < c_2 < c_{3-\kappa}$, we have $\pi_{p+i-1}(J) \not\owns c_{2-\kappa}$. Extend
$J$ on either side by equally long arcs $L$ and $R$ such that $\pi_{p+i-1}(L \cup R) = H$,
see Figure~\ref{fig:pointsc-j}. Since
$T^{p+i-1}|_H$ is not $\eps$-symmetric, $A_i \not \supset L \cup J \cup R$ provided
$\width(\chain_p) < \eps$. Hence $A_i \not\owns s_{i-\kappa+1}$ as claimed.
\end{proof}

\begin{rem} The bound $\kappa$ in this lemma is not sharp if $T_s$ has a periodic critical point.
For example, for the tent map with $c_2 < c = c_3 < c_1$, the folding pattern is
$$FP(\C0) = * \underbrace{ 0 \, \underline{1} \, 0 \, \underline{2} \, 0 \, 1 \, \underline{3} \,\overbrace{ 1 \, 0 \, 2 \, 0 \, \underline{4} \, 0 \, 2 \, 0 \, 1 \, 3 \, 1 \, 0 \, \underline{5} \,
0 \, 1 \, 3 \, 1 \, 0 \, 2 \, 0 \, 4 \, 0 \, 2 \, 0 \, 1 }^{\text{maximal $p$-symmetric}} \, \underline{6} \, 1 \, 0 \, 2 \, 0 \, 4 \, 0 \, 2}_{\text{maximal $p$-link-symmetric}} 0 \, 1 \dots$$
where $p$-levels of snappy $p$-points are underlined and $*$ denotes the conventional
$p$-level of $\alpha$.
Since $c$ has period $3$, so $c_a=c_{a+3b}$ for all $a,b \in \N$,
$p$-link-symmetric arcs can be longer than $p$-symmetric arcs.
Indeed, the maximal $p$-symmetric arc centered at snappy point $s_5$
stretches from $s_3$ to $s_6$, while maximal $p$-link-symmetric arc centered at
$s_5$ stretches almost from $\alpha$ to some point with $p$-level $2$.
This property holds for all snappy points: the
maximal $p$-link-symmetric arc around $s_i$ contains $s_j$ for all $j \leq i+1$.

A preperiodic example is $s = 2$, \ie $\ILs$ is the Knaster continuum.
\end{rem}

\begin{lem}\label{lem:snappy2}
Assume that $\width(\chain_p) < \eps$ and fix $i \in \N$, $i > \kappa-1$, and let $\ell^i$ and
$\ell^{i-1}$ be the links of $\chain_p$ containing $s_i$ and $s_{i-1}$ respectively.
Let $y$ be such that
$s_{i-1} \prec y \prec s_i$ and $y$ is not contained in the same arc-component
of $\ell^i$ as $s_i$, nor in the same arc-component of $\ell^{i-1}$
as $s_{i-1}$.
Then the maximal $p$-link-symmetric arc $J$ with center $y$ contains
at most one snappy $p$-point, and $J \subset A_i$.
\end{lem}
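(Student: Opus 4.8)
The plan is to prove the two assertions in turn, starting from the observation that $y\in\Int A_i$. Indeed, since $i>\kappa-1$ and $\kappa\ge3$ we have $i-\kappa+2\le i-1<i\le i+1$, so by Lemma~\ref{lem:snappy} the snappy points $s_{i-1},s_i$ lie in $A_i$, the point $s_{i-\kappa+2}$ is interior to $A_i$, and $s_{i-\kappa+2}\preceq s_{i-1}\prec y\prec s_i\prec s_{i+1}\in A_i$; hence $y\in\Int A_i$. I keep the standing hypotheses ($s>\sqrt2$, $c$ not (pre)periodic) and the constants $\delta,N,\eps$ fixed as in \eqref{eq:delta0}--\eqref{eq:delta}.

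For $J\subseteq A_i$ I would argue by contradiction. If $J\not\subseteq A_i$ then, as $y\in\Int A_i$, the arc $J$ overshoots the endpoint of $A_i$ lying between $\alpha$ and $s_i$. Index the links successively visited by a neighbourhood of $A_i\cup J$ in $\C0$ so that $s_i$ sits at the central index $0$ of $A_i$ and $y$ at index $-d$, $d\ge1$, and write $r_A,r_J$ for the radii (in links) of $A_i,J$; overshooting means $r_J>r_A-d$. A suitable composition of the reflection about index $-d$ ($p$-link-symmetry of $J$) with the reflection about index $0$ ($p$-link-symmetry of $A_i$) shows that the list of links has period $2d$ over a range which, because $r_J>r_A-d$ and combining once more with the symmetry of $A_i$, makes the link at index $-r_A-1$ equal the link at index $r_A+1$. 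Appending these two links extends $A_i$ to a strictly longer $p$-link-symmetric arc centred at $s_i$, contradicting its maximality. (Should the periodicity extracted this way be much longer than its period, one contradicts Proposition~\ref{prop:max_link_symmetry} instead, via the correspondence ``$p$-link-symmetric arc in $[\alpha,s_k]\leftrightarrow\eps$-symmetric $T^k|_H$'' from the remark after Definition~\ref{def:eps-symmetry} together with Definition~\ref{def:eps-periodic}.)

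For the bound on snappy points, since they are linearly ordered and $y\in(s_{i-1},s_i)$, it suffices to exclude that $J$ contains two consecutive snappy points $s_j\prec s_{j+1}$. Assuming it does, let $s_a,\dots,s_b$ ($a<b$) be \emph{all} snappy points in $J$; by the previous step $\{s_a,\dots,s_b\}\subseteq\{s_{i-\kappa+2},\dots,s_{i+1}\}$, so $b-a<\kappa$. As $s_{b+1}\notin J$ we have $J\subseteq[\alpha,s_{b+1}]$, so by the remark after Definition~\ref{def:eps-symmetry}, $\pi_{p+b+1}$ is one-to-one on $J$ and $T^{b+1}|_H$ is $\eps$-symmetric for the interval $H:=\pi_{p+b+1}(J)$, whose midpoint all but coincides with $\pi_{p+b+1}(y)$. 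Because $[s_j,s_{j+1}]\subseteq J$ is carried by $\pi_{p+j-1}$ monotonically onto the core $[c_2,c_1]$ with $\pi_{p+j-1}(s_j)=c_1$ (proof of Lemma~\ref{lem:snappy}), and $\pi_{p+j-1}$ is a $T$-iterate of $\pi_{p+b+1}$ of order at most $\kappa+1$, pushing $H$ forward by a bounded number of further iterates — as in the proof of Lemma~\ref{lem:eps2} — produces an $\eps$-symmetric $T$-iterate on an interval of length $\ge22\delta$ about some $c_k$ with $1\le k\le2N$. If that interval lies off-centre from $c$ by at least $\delta$, it contradicts Corollary~\ref{cor:eps3}; if it straddles $c$, it satisfies \eqref{eq:H} and contradicts Proposition~\ref{prop:max_link_symmetry}. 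It is exactly here that the two arc-component separation hypotheses on $y$ are used: they force, via the smallness \eqref{eq:delta} of $\delta$, that $c$ stays at distance $>\delta$ from the midpoint of $H$ (equivalently from $\pi_{p+b+1}(y)$), so that one of the two excluded configurations really occurs.

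The step I expect to be the main obstacle is this last one — turning the purely combinatorial link-symmetry of $J$ into an $\eps$-symmetric iterate of $T$ whose placement relative to $c$ can be controlled — and within it, verifying that the separation hypotheses on $y$, via \eqref{eq:delta}, keep $c$ off the centre of the resulting interval, which is the only way Proposition~\ref{prop:max_link_symmetry} and Corollary~\ref{cor:eps3} could fail to apply. By comparison $J\subseteq A_i$ is a finite bookkeeping of index reflections whose substance is the maximality of $A_i$ (or, in the long-period degenerate case, Proposition~\ref{prop:max_link_symmetry} once more).
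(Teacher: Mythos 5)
The proposal is in the right spirit (reduce to $\eps$-symmetry of a suitable iterate, then appeal to Proposition~\ref{prop:max_link_symmetry} or Corollary~\ref{cor:eps3}, using the arc-component hypotheses to handle the degenerate cases), but it has a genuine gap in the crucial step: the choice of projection. The paper projects $J$ via $\pi_{p+i}$. This is what makes everything work, because $\pi_{p+i}$ is injective on $[\alpha,s_{i+1}]\supset J$ and carries $[s_{i-1},s_i]$ onto $[c_{-1},c]$, so the centre $x=\pi_{p+i}(y)$ is pinned to the \emph{fixed} short interval $[c_{-1},c]$ whose endpoints are $c$ and the closest left preimage of $c$. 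The four-way case analysis (centre near $c$; centre at moderate distance; centre near the midpoint $w$, where one applies $T$ once using that $H$ must contain $c$, $c_{-2}$ or $\hat c_1$; centre near $c_{-1}$) then falls out of \eqref{eq:delta}. Your choice $\pi_{p+b+1}$ instead sends $y$ (a $p$-point of level $l=L_p(y)$) to $c_{\,b+1-l}$, and the index $b+1-l$ is not bounded in terms of $N$ or $\kappa$ — $l$ can be as small as $0$ and $b$ can be as large as $i$. So the claim that a bounded number of further iterates lands you on an interval about some $c_k$ with $1\le k\le 2N$, hence within reach of Corollary~\ref{cor:eps3}, is not available. (Lemma~\ref{lem:eps2} takes that bound on $k$ as a hypothesis; it cannot produce it.) There is also a sign issue: $\pi_{p+i}(y)$ is a \emph{precritical} point $c_{-(i-l)}$, not a forward iterate $c_k$, which is precisely why the paper's case analysis lives in $[c_{-1},c]$ rather than near some $c_k$.

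Two secondary points. First, your reflection argument for $J\subseteq A_i$ does not close as written: composing $J$-symmetry about $-d$ with $A_i$-symmetry about $0$ gives $L(k)=L(k-2d)$ on a range and $L(-r_A-1)=L(r_A+1-2d)$, which is not $L(r_A+1)$; one would have to iterate and track ranges carefully, and the fallback via $\eps$-periodicity is exactly the analysis you are trying to avoid. The paper sidesteps this entirely: it assumes ``$J$ contains two snappy points \emph{or} $J\not\subset A_i$'' simultaneously, notes that either way $|H|\ge 22\delta$, and runs one case analysis that kills both conclusions at once. Second, the role of the two arc-component hypotheses is slightly misdescribed: they do not force $c$ to stay $>\delta$ from the centre of $H$. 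Rather, when $x$ is within $\delta$ of $c$ (resp.\ of $c_{-1}$), the paper splits on whether $|T^i([x,c])|\le\eps$ — if so, $y$ and $s_i$ (resp.\ $s_{i-1}$) would lie in the same arc-component of the same link, violating the hypothesis — and if not, the two near-coincident symmetry centres force $\eps$-periodicity of period $2|x-c|>\eps$, which is then refuted by the mechanism of Lemma~\ref{lem:eps2}. So the hypotheses are used to \emph{dispose of} the ``centre near $c$ or $c_{-1}$'' cases, not to preclude them.
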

\begin{proof}
Let $\cdots < c_{-2} < c_{-1} < c_0 = c$ be the successive precritical points to
the left of $c$ with $T^j(c_{-j}) = c$. Since $A_i$ contains $s_{i+1}$ and its symmetric
point around $s_i$ (at least as boundary points), we have
$\pi_{p+i}(A_i) \supset [\hat c_1, c_1] \supset [c_{-1}, c]$. Let
$H := \pi_{p+i}(J)$ with center $x := \pi_{p+i}(y) \in [c_{-1}, c]$.
Assume by contradiction that $J$ contains two snappy $p$-points, or that
$J \not \subset A_i$. Then $|H| \geq 22\delta$ by the choice of $\delta$ in
\eqref{eq:delta}.

Let $w := (c_{-1} + c)/2$.
We distinguish four cases.
\begin{enumerate}
\item $c-\delta < x < c$. If $|T^{i}([x,c])| \leq \eps$, then we cannot `$\eps$-distinguish'
$x$ from $c$, violating our assumption that $y$ and $s_i$ do not belong to the
same arc-component of the same link.
If $|T^{i}([x,c])| > \eps$, then $T^{i}$ is $\eps$-symmetric on $H$ with centers $x$ and $c$, so
$T^{i}$ is $\eps$-periodic on $H$ with period $2|x-c|$. This leads
to a contradiction by the argument of the proof of Lemma~\ref{lem:eps2}.
\item $w \leq x \leq c-\delta$. Then $H$ satisfies \eqref{eq:H},
so by Proposition~\ref{prop:max_link_symmetry}, $T^{i}|_H$ cannot
be $\eps$-symmetric.
\item  $c_{-1}+\delta/s \leq x < w$. Then
by assumption $H$ contains one of $c$, $c_{-2}$
or $\hat c_1$ (whence $|H| \geq 22\delta$),
and hence $T(H \cap [c_{-2},c] \cap [\hat c_1, c])$
satisfies \eqref{eq:H}, so $T^{i}|_H$ cannot be $\eps$-symmetric by
Proposition~\ref{prop:max_link_symmetry}.
\item $c_{-1} < x < c_{-1}+\delta/s$. If $|T^{i}([c_{-1},x])| \leq \eps$,
then we cannot `$\eps$-distinguish' $x$ from $c$, violating the assumption that $y$ and
$s_{i-1}$ are not contained in the same arc component of $\ell^{i-1}$.
If $|T^{i}([c_{-1},x])| > \eps$ and again, $H$
by assumption contains one
of $c$, $c_{-2}$ or $\hat c_1$ (so $|H| \geq 22\delta$), then $T^{i-1}$ is
$\eps$-periodic on $T(H)$ which again leads to a contradiction by the argument of the proof of
Lemma~\ref{lem:eps2}.
\end{enumerate}
This proves the lemma.
\end{proof}

\section{Link-Symmetric Arcs and Homeomorphisms}\label{sec:link-symmetry}

In this section we study the action of homeomorphisms
$h:\ILsp \to \ILs$ on snappy $q$-points and $q$-points in general.
Let $q, p, g \in \N_0$ be such that
\[
h(\chain_{q}) \preceq \chain_{p} \preceq h(\chain_{g}).
\]
Recall that we assumed the slopes $s'$ and $s$ to be such that the critical points $c'$ and $c$ are not
(pre)periodic. Clearly $h$ maps the zero-composant $\C0'$ of $\ILsp$ to the zero-composant
$\C0$ of $\ILs$, and in particular the endpoint $\alpha'$ of $\C0'$ to the endpoint $\alpha$
of $\C0$. Let $\kappa' := \min\{ i \geq 3 : c'_i \geq c'\}$, where $c'_i = T_{s'}^i(c')$.
Let us denote the snappy $q$-points (\ie associated with $\chain_q$) by $s'_i$ and the snappy
$g$-points by $s''_i$. Therefore, snappy $q$-point $s'_i$ is the same as snappy $g$-point
$s''_{i+q-g}$. Similarly, let $A'_i$ be the maximal $q$-link-symmetric arc centered at $s'_i$
while as before, $A_i$ denotes the maximal $p$-link-symmetric arc centered at $s_i$

Since $A'_i$ is $q$-link-symmetric, and $h(\chain_{q}) \preceq \chain_{p}$,
the image  $D_i := h(A'_i) \subset \C0$ is $p$-link-symmetric and therefore
has a well-defined center, we denote it as $m_i$,
and a well-defined central link  $\ell_p$
(see Definition~\ref{df:linksym}).
In fact, $h(s'_i)$ and $m_i$ belong to the central link  $\ell_p$ and $m_i$ is the $p$-point
with the highest $p$-level of all $p$-points of the arc component of $\ell_p$ which contains $h(s_i)$.
Let $M_i := L_p(m_i)$.
\begin{theorem}\label{thm:shift}
$M_{i+1} = M_i + 1$ for all sufficiently large integers $i \in \N$.
\end{theorem}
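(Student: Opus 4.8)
\textbf{Proof plan for Theorem~\ref{thm:shift}.}

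\medskip\noindent
The plan is to exploit the fact that $h$ is a homeomorphism that maps maximal link-symmetric arcs to maximal link-symmetric arcs, together with the structural information about snappy points contained in Lemmas~\ref{lem:snappy} and~\ref{lem:snappy2}. First I would record the key property: since $h(\chain_q) \preceq \chain_p \preceq h(\chain_g)$ and $h$ is a homeomorphism, $h$ maps a $q$-link-symmetric arc to a $p$-link-symmetric arc, and more delicately, it maps a \emph{maximal} $q$-link-symmetric arc to an arc that is $p$-link-symmetric and, up to bounded truncation/extension controlled by the refinement, behaves like a maximal $p$-link-symmetric arc. The consequence I want is that the images $D_i = h(A'_i)$ are nested and exhausting in essentially the same combinatorial way that the $A'_i$ are: namely, $A'_i \subset A'_{i+1}$ eventually (by Lemma~\ref{lem:snappy}, each $A'_i$ contains the $\kappa'$ snappy points $s'_{i-\kappa'+2},\dots,s'_{i+1}$ with $s'_{i-\kappa'+2}$ interior), so $D_i \subset D_{i+1}$ eventually, and $\bigcup_i A'_i = \C0'$ forces $\bigcup_i D_i = \C0$.

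\medskip\noindent
Next I would analyze the centers $m_i$ of $D_i$. Because $D_i$ is a maximal-type $p$-link-symmetric arc containing $h(s'_i)$ in its central link, I want to show $m_i$ is within bounded arc-length (or within a bounded number of links) of a snappy $p$-point. The idea is: if $m_i$ were \emph{not} near a snappy $p$-point, then the center $y := h^{-1}(\text{that snappy }p\text{-point near }m_i)$ would be a point strictly between two consecutive snappy $q$-points and not in the relevant arc-components, and Lemma~\ref{lem:snappy2} says the maximal $q$-link-symmetric arc around such a $y$ contains at most one snappy $q$-point and sits inside some $A'_j$ — which would contradict $D_i$ being as long as it is (it contains $\kappa$-many snappy $p$-points by Lemma~\ref{lem:snappy} applied on the $\ILs$ side). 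So $m_i$ is essentially a snappy $p$-point $s_{\phi(i)}$ for some index function $\phi$. Then, since $D_i \subset D_{i+1}$ and these arcs are maximal link-symmetric with centers at snappy points, and since on the $\ILs$ side consecutive maximal link-symmetric arcs around snappy points have centers at consecutive snappy points (again Lemma~\ref{lem:snappy}: $A_n$ and $A_{n+1}$ share exactly $\kappa-1$ snappy points and $A_n \subset A_{n+1}$), the nesting $D_i \subset D_{i+1}$ together with $\bigcup D_i = \C0$ forces $\phi(i+1) = \phi(i)+1$ for large $i$. Taking $M = \phi(i) - i$ (eventually constant) and recalling $M_i = L_p(m_i) = \phi(i)$ gives $M_{i+1} = M_i + 1$.

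\medskip\noindent
The main obstacle I anticipate is the bookkeeping around "maximality'': $h$ of a maximal $q$-link-symmetric arc need not be literally a maximal $p$-link-symmetric arc, because the chain $h(\chain_q)$ is only a refinement of $\chain_p$, not equal to it, so link-symmetry can be gained or lost at the boundary under coarsening. I would handle this by working with the intermediate chain relations $h(\chain_q) \preceq \chain_p \preceq h(\chain_g)$ to sandwich $D_i$ between two link-symmetric arcs whose lengths (number of links, hence number of contained snappy points via Lemma~\ref{lem:snappy}) differ by a bounded amount independent of $i$; this boundedness is exactly what lets the "at most one snappy point'' dichotomy of Lemma~\ref{lem:snappy2} bite and pins $m_i$ to a snappy $p$-point up to bounded error, after which the eventual exactness $M_{i+1}=M_i+1$ follows from the rigidity of the snappy-point structure. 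A secondary technical point is making precise the "$\eps$-scale'' identification of $m_i$ with a genuine snappy $p$-point rather than merely a $p$-point of comparable level nearby; here I would use that snappy $p$-points are characterized by the folding pattern inequality $0 \le L_p(x) < L_p(s_i)$ for all $p$-points $x \in (\alpha, s_i)$, which is preserved under the combinatorial correspondence $h$ induces on folding patterns.
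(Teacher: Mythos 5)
There is a genuine gap, and it is load-bearing.

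Your key structural claim is that $A'_i \subset A'_{i+1}$ eventually (and hence $D_i \subset D_{i+1}$, with $\bigcup_i D_i = \C0$). This is false. Lemma~\ref{lem:snappy} says $A'_i$ contains \emph{exactly} $\kappa'$ snappy $q$-points, namely $s'_{i-\kappa'+2},\dots,s'_{i+1}$, with $s'_{i-\kappa'+2}$ an \emph{interior} point. Apply the same lemma to $A'_{i+1}$: its snappy points are exactly $s'_{i-\kappa'+3},\dots,s'_{i+2}$, so $s'_{i-\kappa'+2} \notin A'_{i+1}$. Thus $A'_i$ contains a point that $A'_{i+1}$ does not, and the nesting fails. (Quantitatively, using \eqref{eq:dbarmi} the left endpoints of the $A'_i$ move rightward as $i$ grows.) Since nesting fails, both your ``exhaustion'' conclusion $\bigcup D_i = \C0$ via this route and your final step, ``nesting forces $\phi(i+1)=\phi(i)+1$,'' collapse. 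Worse, even granting some form of nesting, the final step would still not work: two maximal $p$-link-symmetric arcs centered at \emph{distinct} snappy $p$-points are incomparable (each contains the other's excluded leftmost snappy point), so the picture of nested maximal arcs marching along the snappy indices does not exist.

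Your step showing that $m_i$ is (essentially) a snappy $p$-point is the part with the right idea, but your sketch applies Lemma~\ref{lem:snappy2} to $y=h^{-1}(\text{a nearby snappy }p\text{-point})$ in $\ILsp$, which is not what the hypotheses set up. The paper instead works in the opposite direction: it assumes $m_N$ is not snappy, locates $m_N$ strictly between consecutive snappy $p$-points $s_{j-1}$ and $s_j$, invokes Lemma~\ref{lem:snappy2} \emph{on the $\ILs$ side} to conclude $D_N$ has at most one snappy $p$-point and $D_N \subset A_j$, then passes to a $p$-link-symmetric arc $B$ with $D_N \subset B \subset A_j$ centered at $s_j$ and pulls this back via $\sigma^{q-g}\circ h^{-1}$ (using $\chain_p \preceq h(\chain_g)$) to a $g$-link-symmetric arc $B''$. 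That arc $B''$ contains the $\kappa'$ snappy $g$-points coming from $\sigma^{q-g}(A'_N)$; if its center $z''$ is non-snappy this contradicts Lemma~\ref{lem:snappy2}, and if $z''$ is snappy it contradicts the ``exactly $\kappa'$, one-sided excess'' structure from Lemma~\ref{lem:snappy}. This push--pull across $h$, $h^{-1}$, and $\sigma$ is exactly the careful use of the three chains that you flagged as a worry but did not resolve. Finally, the shift-by-$1$ step in the paper is not a nesting argument at all but a counting argument: the centers $s_{M_N},\dots,s_{M_{N+\kappa'-1}}$ are shown to be pairwise distinct snappy $p$-points all lying in $D_{N+\kappa'-2}\subseteq A_{M_{N+\kappa'-2}}$, which by Lemma~\ref{lem:snappy} has exactly $\kappa$ snappy $p$-points; this forces $\kappa'=\kappa$ and $M_{N+i}=M_N+i$. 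You would need to replace your nesting/exhaustion mechanism with something of this kind.
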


\begin{proof}
Without loss of generality we can assume that $s' \ge s$,
so that $\kappa' \ge \kappa$.
We prove first that if $N \geq \kappa$ is so large that $m_N$ lies beyond the $\kappa$-th
snappy $p$-point of $\C0$, then $L_p(y) < M_N$, for every $y \in (\alpha, m_N)$; \ie $m_N$ is snappy.

Assume by contradiction that there exists $y \in (\alpha, m_N)$ such
that $L_p(y) \ge M_N$. By taking $L_p(y)$ maximal with this
property, we can assume that $y = s_{j-1} \prec m_N \prec s_j$ for
some $j > \kappa$. More precisely, $m_N$ is not contained in the same arc-component of the link
containing $s_{j-1}$ as $s_{j-1}$, and similarly for $s_j$.
Lemma~\ref{lem:snappy2} implies that $D_N$
contains at most one snappy $p$-point and that $D_N \subset A_j$.
Let us denote by $B$ the $p$-link-symmetric arc such that $s_j$ is the center of $B$,
$D_N \subset B \subset A_j$ and $\bd D_N \cap \bd B \ne \emptyset$ (see Figure~\ref{fig:lem:l}).
Since $\chain_{p} \preceq h(\chain_{g})$, the arc $B'' = \sigma^{q - g} \circ h^{-1}(B)$ is
$g$-link-symmetric and contains the arc
$\sigma^{q - g} \circ h^{-1}(D_N) = \sigma^{q - g}(A'_N)$.
The center $z''$ of $B''$ is the center of the arc component of
the central link $\ell_g$ of $B''$ containing
$\sigma^{q - g} \circ h^{-1}(s_j)$.
By Lemma~\ref{lem:snappy}, $A'_N$ contains $\kappa'$ snappy $q$-points
$s'_{N-\kappa'+2}, \dots ,s'_N, s'_{N+1}$.

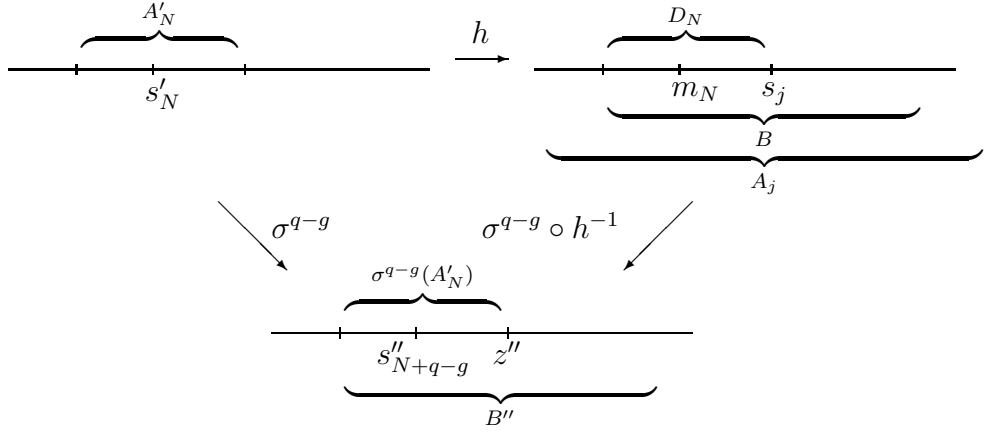
\begin{figure}[ht]
\unitlength=7mm
\begin{picture}(15,8)(1,-0.5)
\put(1,6){\line(1,0){8}}
\put(2.3,5.9){\line(0,1){0.2}}
\put(3.75,5.9){\line(0,1){0.2}}
\put(3.6,5.4){$s'_N$}
\put(2.4,6.4){$\overbrace{\qquad\qquad \quad}^{A'_N}$}
\put(5.5,5.9){\line(0,1){0.2}}
\put(9.5,6.2){\vector(1,0){1}}\put(9.8,6.5){$h$}
\put(11,6){\line(1,0){8}}
\put(12.3,5.9){\line(0,1){0.2}}
\put(13.75,5.9){\line(0,1){0.2}}\put(13.6,5.5){$m_N$}
\put(12.4,6.4){$\overbrace{\qquad\qquad \quad}^{D_N}$}
\put(15.5,5.9){\line(0,1){0.2}}\put(15.3,5.5){$s_j$}
\put(12.4,5.3){$\underbrace{\qquad \qquad \qquad \qquad \qquad}_{B}$}
\put(11.23,4.5){$\underbrace{\qquad \qquad \qquad\qquad \qquad \qquad \qquad}_{A_j}$}
\put(5,3.5){\vector(1,-1){1.3}}\put(6,2.8){$\sigma^{q-g}$}
\put(14,3.5){\vector(-1,-1){1.3}}\put(10,2.8){$\sigma^{q-g} \circ h^{-1}$}
\put(6,1){\line(1,0){8}}
\put(7.3,0.9){\line(0,1){0.2}}
\put(8.75,0.9){\line(0,1){0.2}}\put(8,0.4){$s''_{N+q-g}$}
\put(7.4,1.4){$\overbrace{\qquad\qquad \quad}^{\sigma^{q-g}(A'_{N})}$}
\put(10.5,0.9){\line(0,1){0.2}}\put(10.2,0.4){$z''$}
\put(7.4,0){$\underbrace{\qquad \qquad \qquad \qquad \qquad}_{B''}$}
\end{picture}
\caption{The relations between relative snappy points and arcs in $\chain_q$ (left),
$\chain_p$ (right), and $\chain_g$ (bottom).
\label{fig:lem:l}}
\end{figure}

The map $\sigma^{q-g}$ maps the $\kappa'$ snappy $q$-points $s'_i \in
A'_N$ to the $\kappa'$ snappy $g$-points $s''_{i+q-g} \in \sigma^{q-g}(A'_{N})$,
and $B''$ contains at least these $\kappa'$ snappy $g$-points. If
the center $z''$ of $B''$ is not snappy, then $B''$ contains at most one snappy $g$-point by
Lemma~\ref{lem:snappy2}, so we have a contradiction. Otherwise, if $z''$ is snappy, then even if
$z''$ is the right-most snappy $g$-point of $\sigma^{q-g}(A'_N)$, then still $B''$ contains $\kappa'-1$
snappy $g$-points on the left of the center $z''$, contradicting Lemma~\ref{lem:snappy}.
Therefore, $m_N$ is snappy.

Let us consider the arc $D_{N + \kappa' - 2} = h(A'_{N + \kappa' - 2})$.
Since $L_q(s'_{i+1}) - L_q(s'_i) = 1$, the arc $[s'_i, s'_{i+1}]$ contains
a $q$-point of every $q$-level less than $i$, so contains $q$-points of
$q$-levels 1 and 2. Therefore, $\pi_q([s'_i, s'_{i+1}]) = [c_2, c_1]$.
Note that two different points from $s'_N, \dots, s'_{N+\kappa'-1} \in A'_{N+\kappa'-2}$
can be mapped into the same link, say $\ell_p$ of $\chain_p$, but cannot
be mapped into the same arc component of $\ell_p$. Indeed, if $h([s'_i, s'_{i+1}]) \subset A$,
where $A$ is a arc component of $\ell_p$, then $h(\chain_q) \subset \ell_p$, a contradiction.
Therefore, $s_{M_N}, \dots , s_{M_{N + \kappa' -1}}$ are all different.

So, the arc $D_{N + \kappa' - 2}$ is $p$-link-symmetric and contains at least
$\kappa'$ snappy $p$-points, $s_{M_N}, \dots , s_{M_{N + \kappa' -1}}$.
By Lemma~\ref{lem:snappy}, the maximal $p$-link-symmetric arc
$A_{M_{N + \kappa' - 2}}$ centered at the snappy $p$-point $s_{M_{N + \kappa' -2}}$ contains $\kappa$ snappy $p$-points,
$s_{M_{N + \kappa' - 2} - \kappa + 2}, \dots , s_{M_{N + \kappa' - 2}}, s_{M_{N + \kappa' - 2} + 1}$.
Therefore, $D_{N + \kappa' - 2} \subseteq A_{M_{N + \kappa' - 2}}$, $\kappa' = \kappa$,
$s_{M_{N + i}} = s_{M_N + i}$ and $M_{N + i} = M_N + i$ for all $0
\leq i \leq \kappa-1$. By induction we get $M_{N + i} = M_N + i$ for
all $i \in \N_0$ as well.
\end{proof}

Every snappy $p$-point $s_i \in \C0$ can be contained in at most two
links of $\chain_p$, and one of them is always the central link of $A_i$,
which we will denote by $\ell_p^{s_i}$. Let $K_{s_i}$ be the arc component of
$\ell_p^{s_i}$ containing $s_i$. Given a $p$-point $x \in \C0$ with $L_p(x) = l$,
there can be two links of $\chain_p$ containing $x$, but one of them is always
$\ell_p^{s_l}$. We denote the arc component of $\ell_p^{s_l}$ containing $x$ by
$K_x$. Let $\ell_q^{s'_i} \in \chain_q$ and
$K_{s'_i} \subset \ell_q^{s'_i}$ be the similar notation related to $\C0'$ and $\chain_q$. Also, for a
$q$-point $x'$ of $\C0'$ with $L_q(x') = k$ let the arc component of $\ell_q^{s'_k}$ containing $x'$ be
denoted by $K_{x'}$.
\begin{pro}\label{pro:levels}
There exists $M \in \Z$ such that the following holds:
\begin{itemize}
\item[(1)] Let $l \in \N$ and let $x'$ be a $q$-point with $L_q(x') = l$. Then
$u = h(x') \in \ell_p^{s_{l+M}}$ and the arc component $K_u \subset \ell_p^{s_{l+M}}$ containing $u$,
also contains a $p$-point $x$ such that $L_p(x) = l + M$.
\item[(2)] For $l\in \N_0$ and $i\in \N$, the number of $q$-points in $[s'_i, s'_{i+1}]$ with
$q$-level $l$ is the same as the number of $p$-points in $[s_{M+i}, s_{M+i+1}]$ with $p$-level $M+l$.
\end{itemize}
\end{pro}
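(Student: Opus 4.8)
The plan is to build on Theorem~\ref{thm:shift} and the structural picture it provides. Let me sketch the argument.

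\medskip\noindent\textbf{Approach.} Theorem~\ref{thm:shift} gives an integer $M_N$ and establishes that $M_{N+i} = M_N + i$ for all $i \in \N_0$, and moreover (in the last paragraph of its proof) that $\kappa' = \kappa$ and that $D_{N+\kappa'-2} \subseteq A_{M_{N+\kappa'-2}}$. The natural candidate for the constant in Proposition~\ref{pro:levels} is $M := M_N - N$, so that the snappy $q$-point $s'_i$ maps into the central link $\ell_p^{s_{i+M}}$ of $A_{i+M}$ for all large $i$; the content of part~(1) is to promote this from snappy points to \emph{all} $q$-points, and part~(2) is then a counting consequence.

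\medskip\noindent\textbf{Key steps.} First I would fix $N$ large as in Theorem~\ref{thm:shift} and set $M := M_N - N$, so that $h(s'_i) \in \ell_p^{s_{i+M}} = \ell_p^{s_{M_i}}$ for all $i \geq N$; by passing to sufficiently large indices and using $s'_i = \sigma^{q-g}(s''_{i+q-g})$ one checks this is consistent and that $M$ does not depend on the auxiliary choices. Second, for a general $q$-point $x'$ with $L_q(x') = l$, I would locate $x'$ inside the interval $[s'_i, s'_{i+1}]$ for the appropriate $i$; since $L_q(s'_{i+1}) - L_q(s'_i) = 1$, the arc $[s'_i, s'_{i+1}]$ is $q$-symmetric with $\pi_q([s'_i,s'_{i+1}]) = [c_2,c_1]$ and contains $q$-points of every level $\leq i$, in a pattern dictated by the folding pattern of $T^{i}$ on $[0,c_1]$. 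The image $h([s'_i,s'_{i+1}])$ is a $p$-link-symmetric arc contained in $A_{M_i} = A_{i+M}$ (this is exactly what the end of the proof of Theorem~\ref{thm:shift} shows, applied along the chain of inclusions $D_{i+\kappa-2} \subseteq A_{M_{i+\kappa-2}}$). By Lemma~\ref{lem:snappy}, $A_{i+M}$ has exactly $\kappa$ snappy $p$-points and its $p$-folding pattern matches that of $T^{i+M}$ on $[0,c_1]$ up to reflection. Since $h$ is a homeomorphism preserving the link structure, and since both $[s'_i,s'_{i+1}]$ and its image carry the \emph{same} folding pattern (both being governed by iterating tent maps whose critical orbits agree in the relevant combinatorial data once $\kappa' = \kappa$), the $q$-point $x'$ of level $l$ must map into the link containing the $p$-point of level $l+M$ occupying the corresponding position in the folding pattern — that is, into $\ell_p^{s_{l+M}}$, with $K_u$ containing such a $p$-point $x$. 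Third, part~(2) follows immediately: $h$ restricts to a bijection $[s'_i,s'_{i+1}] \to h([s'_i,s'_{i+1}])$, and since the latter arc is $p$-link-symmetric inside $A_{i+M}$ with folding pattern matching $[s_{i+M}, s_{i+M+1}]$, counting $q$-points of level $l$ on the left equals counting $p$-points of level $l+M$ on the right; one uses Lemma~\ref{lem:snappy2} to rule out two distinct $q$-points collapsing into the same arc component, so the count is genuinely preserved.

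\medskip\noindent\textbf{Main obstacle.} The delicate point is the passage from snappy points to arbitrary $q$-points in part~(1): I must argue that the link-preserving homeomorphism $h$ cannot ``shuffle'' $q$-points of different levels within the block $[s'_i,s'_{i+1}]$ before depositing them in $\ell_p^{s_{l+M}}$. The key is that the folding pattern of an arc determines, and is determined by, the combinatorial sequence of levels of its $p$-points, and that $h$ preserves both the linear order on $\C0$ and the link-incidence structure; combined with $\kappa' = \kappa$ (which forces the two tent maps to have matching short-range critical combinatorics) and the fact that $D_{i+\kappa-2} \subseteq A_{M_{i+\kappa-2}}$ with \emph{equal} numbers of snappy points, the folding patterns of $[s'_i,s'_{i+1}]$ and $h([s'_i,s'_{i+1}])$ must coincide after the shift by $M$. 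Making this matching rigorous — i.e.\ showing the bijection $h$ on $q$-points respects levels and not merely their count — is where most of the work lies; the counting statement~(2) is then essentially a corollary, modulo the standard use of Lemma~\ref{lem:snappy2} to prevent collapse of arc components.
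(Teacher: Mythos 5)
There is a genuine gap in your argument for part (1), and it sits precisely where you flag the ``main obstacle.'' You assert that the folding patterns of $[s'_i,s'_{i+1}]$ and $[s_{M+i},s_{M+i+1}]$ must coincide because ``both tent maps have matching short-range critical combinatorics once $\kappa'=\kappa$'' and because $D_{i+\kappa-2}\subseteq A_{M_{i+\kappa-2}}$ with equal numbers of snappy points. That reasoning is circular: $\kappa'=\kappa$ is only a finite, shallow agreement, and the eventual equality $FP_q([s'_k,s'_{k+1}])=FP_{p+M}([s_{M+k},s_{M+k+1}])$ is essentially the Ingram Conjecture itself --- it is the \emph{conclusion} drawn from this proposition, not something that can be fed into its proof. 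The paper instead establishes (1) by an explicit bootstrapping induction that never presupposes any agreement between the folding patterns of the two maps: starting from the base case that snappy $q$-points land in the correct central links (Theorem~\ref{thm:shift}), one takes the unique $q$-point $x'_k$ making $[s'_k,x'_k]$ $q$-symmetric about $s'_{k+1}$, observes that $h([s'_k,x'_k])$ is then $p$-link-symmetric about $s_{M+k+1}$, and pairs $x'_k$ with the unique $p$-point $b_k$ making $[s_{M+k},b_k]$ $p$-symmetric about $s_{M+k+1}$; then, since every $q$-point in $(x'_k,s'_{k+2})$ has a reflected twin in $(s'_{k+1},x'_k)$ about $x'_k$ (and correspondingly on the $p$-side about $b_k$), the level-correspondence already known on $[s'_{k+1},x'_k]$ is reflected forward. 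This reflection mechanism is the missing ingredient in your sketch.

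A secondary gap is in part (2). You claim the counting follows ``immediately'' from the bijectivity of $h$ on the arc plus Lemma~\ref{lem:snappy2}. But (1) as stated only gives that each $q$-point of level $l$ lands in an arc component of $\ell_p^{s_{l+M}}$ that \emph{contains} a $p$-point of level $l+M$; a priori one does not get a bijection between the two finite level-$l$ sets. The paper closes this by applying (1) symmetrically to $h^{-1}$ (with chain $\chain_g$ and constant $M'$), and using $h^{-1}\circ h = \mathrm{id}$ together with $L_g(x'')=L_q(x')+q-g$ to deduce $M+M'=q-g$, from which the two counts must agree. That two-sided argument is necessary and absent from your sketch.
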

\begin{proof} $(1)$ Recall that the set of $q$-points in $\C0'$ is denoted by $E'_q$. By Theorem~\ref{thm:shift},
there exists $M \in \Z$ such that $a_i = h(s'_i) \in \ell_p^{s_{M+i}}$
for every $i \in \N_0$ and the arc
component $K_{a_i}$ of $\ell_p^{s_{M+i}}$ contains $s_{M+i}$. Therefore, statement $(1)$ is true for all snappy
$q$-points.

Also $h([s'_1, s'_2]) = [a_1,a_2]$, $s_{M+1} \in K_{a_1}$ and
$s_{M+2} \in K_{a_2}$. Let $q$-point $x'_1 \in [s'_2, s'_3]$ be such
that the arc $[s'_1,x'_1]$ is $q$-symmetric with center $s'_2$. Then
$h([s'_1, x'_1])$ is $p$-link-symmetric with center $s_{M+2}$. Since
there exists a unique $p$-point $b_1$ such that the arc
$[s_{M+1},b_1]$ is $p$-symmetric with center $s_{M+2}$, we have
$h(x'_1) \in K_{b_1}$, see Figure \ref{fig:levels}. Also $L_q(x'_1)
= 1$ and $L_p(b_1) = M+1$.

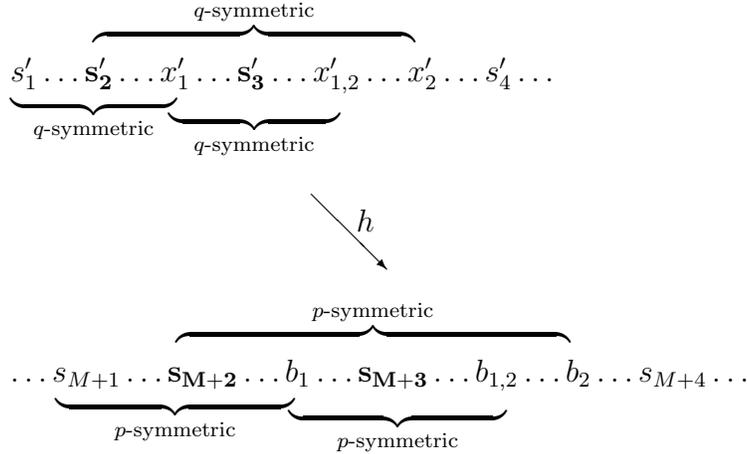
\begin{figure}[ht]
\unitlength=10mm
\begin{picture}(15,6.3)(-2,1)
\put(0,6){$s'_{1} \dots {\bf s'_2} \dots x'_1 \dots {\bf s'_{3}} \dots x'_{1,2} \dots x'_{2} \dots
s'_{4} \dots$}
\put(1.1,6.5){$\overbrace{\qquad \qquad \qquad \quad \quad \quad \quad \ }^{\textrm{$q$-symmetric}}$}
\put(0,5.8){$\underbrace{\qquad \qquad \quad \ }_{\textrm{$q$-symmetric}}$}
\put(2.1,5.6){$\underbrace{\qquad \qquad \quad \ \, }_{\textrm{$q$-symmetric}}$}
\put(4,4.5){\vector(1,-1){1}}\put(4.6,4){$h$}
\put(0,2){$\dots s_{M+1} \dots {\bf s_{M+2}} \dots b_1 \dots {\bf s_{M+3}} \dots b_{1,2} \dots b_{2} \dots s_{M+4} \dots $}
\put(2.2,2.5){$\overbrace{\qquad \qquad \qquad \qquad \qquad \quad \quad \ \ }^{\textrm{$p$-symmetric}}$}
\put(0.6,1.8){$\underbrace{\qquad \qquad \quad \quad \quad \ \ }_{\textrm{$p$-symmetric}}$}
\put(3.7,1.65){$\underbrace{\qquad \qquad \qquad \quad }_{\textrm{$p$-symmetric}}$}
\end{picture}
\caption{The configuration of symmetric arcs.
\label{fig:levels}}
\end{figure}

We have $h([s'_2, s'_3]) = [a_2,a_3]$, $s_{M+2} \in K_{a_2}$ and
$s_{M+3} \in K_{a_3}$. Let the $q$-point $x'_2 \in [s'_3, s'_4]$ be
such that the arc $[s'_2,x'_2]$ is $q$-symmetric with center $s'_3$.
Therefore $h([s'_2, x'_2])$ is $p$-link-symmetric with center $s_{M+3}$.
There exists a unique $p$-point $b_2$ such that the arc
$[s_{M+2},b_2]$ is $p$-symmetric with center $s_{M+3}$, so $h(x'_2)
\in K_{b_2}$. Also $L_q(x'_2) = 2$ and $L_p(b_2) = M+2$. Since
$[s'_2,x'_2]$ is $q$-symmetric, there exists a $q$-point $x'_{1,2}
\in [s'_3, x'_2]$ such that the arc $[x'_1,x'_{1,2}]$ is
$q$-symmetric with center $s'_3$. Then $h([x'_1,x'_{1,2}])$ is
$p$-link-symmetric with center $s_{M+3}$. Since there exists a
unique $p$-point $b_{1,2}$ such that the arc $[b_1,b_{1,2}]$ is
$p$-symmetric with center $s_{M+3}$, we have $h(x'_{1,2}) \in
K_{b_{1,2}}$, see Figure \ref{fig:levels}. Also $L_q(x'_{1,2}) = 1$
and $L_p(b_{1,2}) = M+1$.

The proof of $(1)$ follows by induction. Suppose at step $k$ we have
$h([s'_k, s'_{k+1}]) = [a_k,a_{k+1}]$, $s_{M+k} \in K_{a_k}$ and
$s_{M+k+1} \in K_{a_{k+1}}$, see Figure~\ref{fig:levels2}. Let again
$q$-point $x'_k \in [s'_{k+1}, s'_{k+2}]$ be such that the arc
$[s'_k, x'_k]$ is $q$-symmetric with center $s'_{k+1}$. Then
$h([s'_k, x'_k])$ is $p$-link-symmetric with center $s_{M+k+1}$. The
unique $p$-point $b_k$ such that $[s_{M+k},b_k]$ is $p$-symmetric
with center $s_{M+k+1}$ satisfies $h(x'_k) \in K_{b_k}$. Also
$L_q(x'_k) = k$ and $L_p(b_k) = M+k$.

\begin{figure}[ht]
\unitlength=10mm
\begin{picture}(13, 4.4)(0.5,0)
\put(1,3){\line(1,0){13}}
\put(2,2.9){\line(0,1){0.2}}\put(1.8,3.3){$s'_k$}
\put(4,2.9){\line(0,1){0.2}}\put(3.8,3.3){$s'_{k+1}$}
\put(5.5,2.9){\line(0,1){0.2}}\put(5.4,3.3){$x'$}
\put(6,2.9){\line(0,1){0.2}}\put(5.8,3.3){$x'_k$}
\put(6.5,2.9){\line(0,1){0.2}}\put(6.4,3.3){$y'$}
\put(7.5,2.9){\line(0,1){0.2}}\put(7.3,3.3){$s'_{k+2}$}
\put(11,2.9){\line(0,1){0.2}}\put(10.8,3.3){$x'_{k+1}$}
\put(13,2.9){\line(0,1){0.2}}\put(12.8,3.3){$s'_{k+3}$}
\put(5,3.8){$\overbrace{\quad \quad \ }^{\text{\small
$q$-symmetric}}$}
\put(1,2.6){\vector(0,-1){1.2}}\put(0.6,1.8){$h$}
\put(1,1){\line(1,0){13}}
\put(2,0.9){\line(0,1){0.2}}\put(1.8,1.3){$s_{M+k}$}
\put(1.6,0.5){$\approx h(s'_k)$}
\put(4,0.9){\line(0,1){0.2}}\put(3.8,1.3){$s_{M+k+1}$}
\put(5.5,0.9){\line(0,1){0.2}}\put(5.4,1.3){$x$}
\put(6,0.9){\line(0,1){0.2}}\put(5.9,1.3){$b_k$}
\put(6.5,0.9){\line(0,1){0.2}}\put(6.4,1.3){$y$}
\put(7.5,0.9){\line(0,1){0.2}}\put(7.3,1.3){$s_{M+k+2}$}
\put(7.2,0.5){$\approx h(s'_{k+2})$}
\put(11,0.9){\line(0,1){0.2}}\put(10.8,1.3){$b_{k+1}$}
\put(10.6,0.5){$\approx h(x'_{k+1})$}
\put(13,0.9){\line(0,1){0.2}}\put(12.8,1.3){$s_{M+k+3}$}
\put(12.6,0.5){$\approx h(s'_{k+3})$}
\put(5.1,0.8){$\underbrace{\quad \quad \ }_{\text{\small
$p$-link-sym.}}$}
\end{picture}
\caption{The relative point in the induction step. Here $\approx$
stands for ``belongs to the same arc component in the same link''.} \label{fig:levels2}
\end{figure}
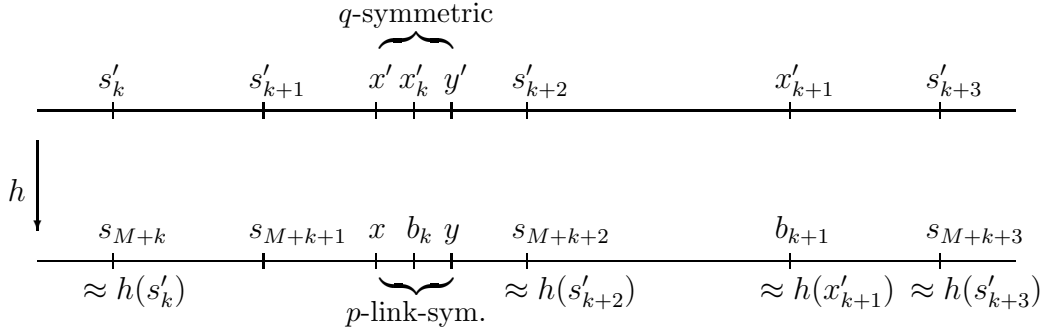

Let us suppose by induction that for every $q$-point $x' \in E_q$, $L_q(x') > 0$, $x' \prec x'_k$, we have
$u = h(x') \in \ell_p^{s_{r+M}}$, where $r = L_q(x')$, and the arc component $K_u \subset \ell_p^{s_{r+M}}$
contains a $p$-point $x$ such that $L_p(x) = r + M$. Since $L_q(x'_k) = k$, $L_q(s'_{k+1}) = k+1$ and
$L_q(s'_{k+2}) = k+2$, for every $q$-point $x' \in (s'_{k+1}, s'_{k+2})$, $x' \ne x'_k$, we have
$L_q(x') < L_q(x'_k)$. Hence for every $q$-point $y' \in (x'_{k}, s'_{k+2})$ there exists a $q$-point
$x' \in (s'_{k+1}, x'_{k})$ such that the arc $[x', y']$ is $q$-symmetric with center $x'_{k}$. So the arc
$h([x', y'])$ is $p$-link-symmetric with center $b_{k}$. The induction hypothesis implies that for $u = h(x')$,
the arc component $K_u \in \ell_p^{s_{r+M}}$ contains a $p$-point $x$ such that $L_p(x) = r + M$, where
$r = L_q(x')$.

Since $L_p(b_k) = M+k$, $L_p(s_{M+k+1}) = M+k+1$ and
$L_p(s_{M+k+2}) = M+k+2$,  we have $L_p(v) < L_p(b_k)$ for every
$p$-point $v \in (s_{M+k+1}, s_{M+k+2})$, $v \ne b_k$. Hence for
every $p$-point $v \in (b_{k}, s_{M+k+2})$ there exists a $p$-point
$w \in (s_{M+k+1}, b_{k})$ such that the arc $[w, v]$ is
$p$-symmetric with center $b_{k}$.
Therefore, and since $h([x', y'])$ is $p$-link-symmetric with center $b_{k}$,
there exists a unique $p$-point $y$ such that the arc $[x, y]$ is
$p$-symmetric with center $b_{k}$. Also, $h(y') \in K_{y}$ and
$L_p(y) = L_p(x)$, so $L_p(y) = L_q(y') + M$. This proves that for
every $q$-point $x' \in E_q$, $L_q(x') > 0$, $x' \prec s'_{k+2}$, we have
$u = h(x') \in \ell_p^{s_{r + M}}$, where $r = L_q(x')$, and the arc component
$K_u \subset \ell_p^{s_{r + M}}$ contains a $p$-point $x$ such that $L_p(x) = r + M$.

Next $h([s'_{k+1}, s'_{k+2}]) = [a_{k+1},a_{k+2}]$, $s_{M+k+1} \in
K_{a_{k+1}}$ and $s_{M+k+2} \in K_{a_{k+2}}$. Let the $q$-point
$x'_{k+1} \in [s'_{k+2}, s'_{k+3}]$ be such that the arc $[s'_{k+1},
x'_{k+1}]$ is $q$-symmetric with center $s'_{k+2}$. Then
$h([s'_{k+1}, x'_{k+1}])$ is $p$-link-symmetric with center
$s_{M+k+2}$. Since there exists a unique $p$-point $b_{k+1}$ such
that the arc $[s_{M+k+1},b_{k+1}]$ is $p$-symmetric with center
$s_{M+k+2}$, it follows that $h(x'_{k+1}) \in K_{b_{k+1}}$. Also,
$L_q(x'_{k+1}) = k+1$ and $L_p(b_{k+1}) = M+k+1$. Since $[s'_{k+1},
x'_{k+1}]$ is $q$-symmetric with center $s'_{k+2}$ and
$[s_{M+k+1},b_{k+1}]$ is $p$-symmetric with center $s_{M+k+2}$, the
same argument as above shows that for every $q$-point $x' \in E_q$,
$L_q(x') > 0$, $x' \prec x'_{k+1}$, we have $u = h(x') \in \ell_p^{s_{r + M}}$,
where $r=L_q(x')$, and the arc component $K_u \subset
\ell_p^{s_{r + M}}$ contains a $p$-point $x$ such that $L_p(x) = r + M$.
This proves the induction step.

$(2)$ Let $x$ be a $p$-point such that $L_p(x) > 0$ and $v = h^{-1}(x)$ lies beyond the $\kappa$-th
snappy $g$-point. Since $h^{-1}$ is also a homeomorphism and $h^{-1}(\chain_p) \prec \chain_g$, $(1)$
implies that there exists $M'$ such that $v \in \ell_g^{s''_{r + M'}}$, where $r = L_p(x)$. Also the
arc component $K_v \subset \ell_g^{s''_{r + M'}}$ contains a $g$-point $x''$ such that $L_g(x'') = r + M'$.

Let $x'$ be a $q$-point such that $L_q(x') > 0$, $x'$ lies beyond the $\kappa$-th snappy $g$-point and
$u = h(x')$ lies beyond the $\kappa$-th snappy $p$-point. Then $u \in \ell_p^{s_{r'+M}}$, where
$r' = L_q(x')$, and the arc component $K_u \subset \ell_p^{s_{r'+M}}$ contains a $p$-point $x$ such that
$L_p(x) = r' + M$. Also $v = h^{-1}(x) \in \ell_g^{s''_{r'+M+M'}}$ and the arc component
$K_v \subset \ell_g^{s''_{r'+M+M'}}$ contains a $g$-point $x''$ such that $L_g(x'') = L_q(x') + M + M'$.
Since $h^{-1} \circ h = id$, we have $x'' = x'$. Also $L_g(x'') =
L_q(x') + q-g$ implies that $M + M' = q-g$.
Since the number of $q$-points in $[s'_i, s'_{i+1}]$ with $q$-level $l$, $l \in \N_0$, is the same as the
number of $g$-points in $[s''_{q-g+i}, s''_{q-g+i+1}]$ with $g$-level $q-g+l$, it follows that this number
is the same as the number of $p$-points in $[s_{M+i}, s_{M+i+1}]$ with $p$-level $M+l$.
\end{proof}
\begin{proof}[Proof of Theorem~\ref{thm:Ingram}]
By \cite{Stim} we can assume that the critical points of $T_s$ and
$T_{s'}$ have infinite orbits. Therefore the above proposition shows that
$$
FP_q([s'_{k}, s'_{k+1}]) = FP_{p+M}([s_{M+k},s_{M+k+1}]) =
FP_p([s_{k},s_{k+1}]),
$$
for every positive integer $k$, and therefore
$FP(\C0') = FP(\C0)$, implying $s' = s$.
This proves the Ingram Conjecture.
\end{proof}

\section{Pseudo-isotopy}\label{sec:Pseudo}

Throughout this section, $h: \ILs \to \ILs$ will be an arbitrary self-homeomorphism.
We will extend Proposition~\ref{pro:levels} in order to prove the result on pseudo-isotopy.
Note that $(1)$ and $(2)$ of Proposition~\ref{pro:levels} together show that
$h$ induces an order preserving injection $h_{q, p}$ from
$E_{q}$ to $E_{p}$ such that $h_{q, p}(E_{q,i}) = E_{p,M+i} = E_{p+M,i}$ for every $i \in \N_0$,
where $E_{r,l}$ denotes the set of all $r$-points with $r$-level $l$ (see Definition~\ref{df:p-point}).
In fact $h_{q, p}$ is an order preserving bijection from $E_{q}$ to $E_{p+M}$ and is defined as follows:

\begin{df}\label{df:hqp}  Let $x \in E_q$. If $x = s'_i$ for some $i \in \N$, we define
$h_{q, p}(s'_i) = s_{M+i} \in E_p$. For all other $x\in E_q$, there exists $i \in \N$ such
that $x \in (s'_i, s'_{i+1})$. By Proposition~\ref{pro:levels}, the number of $q$-points of $(s'_i, s'_{i+1})$
is the same as the number of $(p+M)$-points of $(s_{M+i}, s'_{M+i+1})$. Let
$(s'_i, s'_{i+1}) \cap E_q = \{ x^0, \dots , x^n \}$ and
$(s_{M+i}, s'_{M+i+1}) \cap E_{p+M} = \{ y^0, \dots , y^n \}$. We define $h_{q, p}(x^i) = y^i$,
$i = 0, \dots , n$.
\end{df}

The next lemma shows that $h_{q,p}$ is essentially independent of $q$ and $p$.

\begin{lem}\label{lem:independent} If $q_1, p_1 \in \N$ are such that
$h(\chain_{q_1}) \prec \chain_{p_1} \prec h(\chain_q) \prec \chain_p$, then
$h_{q_1,p_1}|_{E_{q_1}} = h_{q,p}|_{E_{q_1}}$.
\end{lem}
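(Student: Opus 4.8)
The statement asserts that the combinatorial map $h_{q,p}$ on $p$-points is independent of the pair $(q,p)$, in the sense that refining both chains does not change the restriction to the smaller index set. The plan is to compare the two bijections $h_{q,p}$ and $h_{q_1,p_1}$ on a common domain $E_{q_1}$ by tracking how the constructions in Definition~\ref{df:hqp} and Proposition~\ref{pro:levels} behave under the shift maps $\sigma^{q-q_1}$ and $\sigma^{p-p_1}$, which are the homeomorphisms relating the finer chains to the coarser ones. The key observation is that all the structure used to define $h_{q,p}$ --- snappy points, $q$-symmetric arcs, centers of link-symmetric arcs, levels --- transforms in a controlled way under these shifts: the snappy $q$-point $s'_i$ equals the snappy $q_1$-point $s'_{i+q-q_1}$ (as noted after Definition~\ref{df:snappy}), and $\sigma$ is an order-preserving homeomorphism of $\C0$ preserving the folding pattern, hence carrying $q_1$-symmetric arcs to $q$-symmetric arcs and shifting levels by a constant.

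\textbf{Key steps.} First I would record the relation between the two integer shifts: applying Proposition~\ref{pro:levels} to the pair $(q_1,p_1)$ gives a constant $M_1$ with $h(s'^{(q_1)}_i) \in \ell_{p_1}^{s_{i+M_1}}$, and applying it to $(q,p)$ gives $M$ with $h(s'^{(q)}_i)\in \ell_p^{s_{i+M}}$; since $s'^{(q_1)}_i = s'^{(q)}_{i-(q-q_1)}$ and $s^{(p_1)}_j = s^{(p)}_{j-(p-p_1)}$, comparing the two identifications forces $M_1 = M + (q-q_1) - (p-p_1)$. This is the analogue of the identity $M+M'=q-g$ obtained at the end of the proof of Proposition~\ref{pro:levels}. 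Second, I would verify that the inductive construction in Definition~\ref{df:hqp}/Proposition~\ref{pro:levels}$(1)$ is, step by step, intrinsic: the $q$-point $x'_k$ is characterized by $[s'_k,x'_k]$ being $q$-symmetric with center $s'_{k+1}$, and $q$-symmetry is detected through the folding pattern $FP(\C0')$, which does not depend on $q$; likewise $b_k$ is characterized by $[s_{M+k},b_k]$ being $p$-symmetric with center $s_{M+k+1}$, characterized through $FP(\C0)$. Thus, starting from the already-established agreement on snappy points (step one), the same recursion produces the same images in both indexings, once we account for the index shift by $q-q_1$ on the domain side. Third, I would feed this through Definition~\ref{df:hqp} itself: for a non-snappy $x\in E_{q_1}\subset E_q$, it lies in some $(s'_i,s'_{i+1})$ and its position among $(s'_i,s'_{i+1})\cap E_q$ is the same regardless of whether we enumerate with chain $\chain_q$ or $\chain_{q_1}$ (the set of $q$-points in $\C0'$ equals the set of $q_1$-points up to the reindexing of levels), so the assigned image $y^i$ is the same, completing the proof.

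\textbf{Main obstacle.} The delicate point is not any single computation but the bookkeeping needed to see that every characterizing property used in Definition~\ref{df:hqp} --- ``$q$-symmetric with center $s'_{k+1}$'', ``unique $p$-point $b_k$ with $[s_{M+k},b_k]$ $p$-symmetric'' --- is genuinely a statement about the arc $[\alpha,\cdot]\subset\C0'$ (resp.\ $\C0$) together with its folding pattern, and is unchanged when the chain is refined. Concretely, I must check that refining $\chain_q$ to $\chain_{q_1}$ does not create or destroy any $q$-points of positive level on $\C0'$ that the recursion sees (it does not: a point is a $q$-point iff some coordinate equals $c'$, which is chain-independent; only the bookkeeping level shifts by $q-q_1$), and that the ``unique $p$-point'' selected at each stage is the same in $\chain_{p_1}$ as in $\chain_p$ after the shift $p-p_1$. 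Once these identifications are in place, the agreement $h_{q_1,p_1}|_{E_{q_1}} = h_{q,p}|_{E_{q_1}}$ follows by the same induction already carried out in Proposition~\ref{pro:levels}, now run simultaneously for both pairs with the shifts $\sigma^{q-q_1}$ and $\sigma^{p-p_1}$ intertwining them. I expect the proof to be short, essentially a matter of invoking Proposition~\ref{pro:levels} twice and checking the shift identity $M_1 = M+(q-q_1)-(p-p_1)$, then observing that the recursions coincide.
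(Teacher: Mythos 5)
Your overall plan is the same as the paper's: compare the two integer shifts by applying Proposition~\ref{pro:levels} to both pairs $(q,p)$ and $(q_1,p_1)$, identify them via the snappy points, and propagate agreement through the definition of $h_{q,p}$. However, there is a sign error in the key identity that would derail the argument if carried through. Write $r := q_1 - q > 0$ and $l := p_1 - p > 0$. Since the snappy $q_1$-point $\hat s'_i$ is the snappy $q$-point $s'_{i+r}$, and the snappy $p_1$-point $\hat s_j$ is the snappy $p$-point $s_{j+l}$, Proposition~\ref{pro:levels} for $(q,p)$ puts $h(\hat s'_i)=h(s'_{i+r})$ near $s_{(i+r)+M}$, while for $(q_1,p_1)$ it puts the same point near $\hat s_{i+M_1}=s_{(i+M_1)+l}$. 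Equating indices gives $M+r = M_1 + l$, i.e.
\[
M_1 = M + (q_1 - q) - (p_1 - p),
\]
which is the \emph{negative} of your proposed $M_1 = M + (q-q_1) - (p-p_1)$. You can sanity-check: with your formula the quantity $R = M + p - q$ of Corollary~\ref{cor:independent} would change under refinement of the chains by $2(l-r)$, whereas with the correct one it is invariant. The correct identity is exactly what the paper derives by matching the centers $s_{M+i+r}$ and $s_{M_1+i+l}$ of the link-symmetric arcs $A_{M+i+r}$ and $\hat A_{M_1+i}$, which contain the same image $h(\hat s'_i)$.

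A secondary comment: once $M+r=M_1+l$ is established, steps two and three of your plan (re-running the recursion and the enumeration) are unnecessary. Both $h_{q,p}|_{E_{q_1}}$ and $h_{q_1,p_1}$ are then order-preserving bijections from $E_{q_1}$ onto the \emph{same} set $E_{p+M+r}=E_{p_1+M_1}$; since domain and codomain are both order-isomorphic to $\N$, there is only one such bijection, so they coincide. This is the implicit final step in the paper's proof, which concentrates on establishing the single numerical identity. Also, your claim that ``$q$-symmetry is detected through $FP(\C0')$, which does not depend on $q$'' needs care: the folding pattern of the \emph{composant} is chain-independent, but the folding pattern of an \emph{arc} changes under refinement because $E_{q_1}\subsetneq E_q$; the cleanest route is the order-bijection argument above rather than re-verifying each inductive step.
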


\begin{proof}
By Proposition~\ref{pro:levels}, $h(\chain_q) \prec \chain_p$ implies that
there exists
$M \in \Z$ such that $h_{q,p}(E_{q,i}) = E_{p,M+i}$ for every $i \in \N_0$. Also,
$h(\chain_{q_1}) \prec \chain_{p_1}$ implies that there exists $M_1 \in \Z$ such that
$h_{q_1,p_1}(E_{q_1,i}) = E_{p_1,M_1+i}$ for every $i \in \N_0$.
Let $r, l \in \N$ be such that $q_1 = q + r$ and $p_1 = p + l$. Since $E_{q+r,i} = E_{q,r+i}$, we have
$$
h_{q,p}(E_{q+r,i}) = h_{q,p}(E_{q,r+i}) = E_{p,M+r+i},
$$
and also
$$
h_{q+r,p+l}(E_{q+r,i}) = E_{p+l,M_1+i} = E_{p,M_1+l+i}.
$$
We want to prove that $M+r = M_1+l$. To see this it suffices to pick a convenient point $x$ in
$E_{q+r,j}$ for some $j \in \N$, and to prove that $h_{q,p}(x) = y = h_{q+r,p+l}(x)$. Then the fact
that $y \in E_{p,M+r+j}$ and $y \in E_{p,M_1+l+j}$ implies that $M+r+j = L_p(y) = M_1+l+j$. For us,
the convenient choice of  $x \in E_{q+r} \subset E_q$ is a snappy $(q+r)$-point.

Let us denote the snappy $(q+r)$-points by $\hat s'_i$ and the snappy $(p+l)$-points by $\hat s_i$,
while as before $s'_i$ denotes the snappy $q$-points and $s_i$ denotes the snappy $p$-points. Note
that the snappy $(q+r)$-point $\hat s'_i$ is the same as the snappy $q$-point $s'_{i+r}$, and the
snappy $(p+l)$-point $\hat s_i$ is the same as the snappy $p$-point $s_{i+l}$. Let us denote the
maximal $(q+r)$-link-symmetric arc with the center $\hat s'_i$ by $\hat A'_i$, and the maximal
$(p+l)$-link-symmetric arc with the center $\hat s_i$ by $\hat A_i$, while as before $A'_i$ denotes the
maximal $q$-link-symmetric arc with the center $s'_i$, and $A_i$ denotes the maximal $p$-link-symmetric
arc with the center $s_i$. Note that $h(\hat A'_i) \subseteq \hat A_{M_1+i}$,
$h(A'_{i+r}) \subseteq A_{M+i+r}$ and $\hat s'_i = s'_{i+r}$. Also, the center of $\hat A_{M_1+i}$ is
$\hat s_{M_1+i} = s_{M_1+i+l}$ and the center of $A_{M+i+r}$ is $s_{M+i+r}$. Therefore,
$s_{M+i+r} = s_{M_1+i+l}$ and $M+r = M_1+l$.
\end{proof}

\begin{cor}\label{cor:independent} $R = M + p - q$ does not depend on $M, p, q$.
\end{cor}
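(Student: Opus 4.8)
The plan is to deduce Corollary~\ref{cor:independent} directly from Lemma~\ref{lem:independent}. Given two valid choices of parameters, say $(q,p)$ with $h(\chain_q)\prec\chain_p$ and $(q_1,p_1)$ with $h(\chain_{q_1})\prec\chain_{p_1}$, together with their associated shift constants $M$ and $M_1$ (the integers supplied by Proposition~\ref{pro:levels}), I want to show $M+p-q = M_1+p_1-q_1$. First I would reduce to the case where the two parameter choices are comparable, \ie one chain configuration refines the other as in the hypothesis of Lemma~\ref{lem:independent}; by passing to a third parameter choice $(q_2,p_2)$ with both $h(\chain_{q_2})\prec\chain_{p_2}\prec h(\chain_q)\prec\chain_p$ and $h(\chain_{q_2})\prec\chain_{p_2}\prec h(\chain_{q_1})\prec\chain_{p_1}$ (which exists since the natural chains form a refining sequence and $h$ is uniformly continuous), it suffices to prove the claim when the pair $(q_1,p_1)$ already satisfies $h(\chain_{q_1})\prec\chain_{p_1}\prec h(\chain_q)\prec\chain_p$.

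In that comparable case, write $q_1 = q+r$ and $p_1 = p+l$ with $r,l\in\N$ as in the proof of Lemma~\ref{lem:independent}. That lemma's proof shows $M+r = M_1+l$: tracking a snappy $(q+r)$-point $\hat s'_i = s'_{i+r}$ through $h$, its image lies (up to arc-component in the same link) at the snappy point with index $M+i+r$ when computed via $(q,p)$, and at the snappy point with index $M_1+i+l$ when computed via $(q_1,p_1)$, forcing the indices to coincide. From $M+r = M_1+l$ we get
\[
M + p - q = M + p_1 - l - (q_1 - r) = (M+r) + p_1 - q_1 - l = (M_1 + l) + p_1 - q_1 - l = M_1 + p_1 - q_1,
\]
which is exactly the asserted independence of $R := M+p-q$.

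The main obstacle is the reduction step: I need to know that \emph{any} two admissible parameter choices can be connected through a common refinement that is admissible for both, so that Lemma~\ref{lem:independent}'s comparability hypothesis applies. This follows because $\{\chain_p\}$ is a sequence of natural chains with $\chain_{p+1}\preceq\chain_p$ and $\mesh(\chain_p)\to 0$, while $h$ and $h^{-1}$ are uniformly continuous, so for any target $\chain_p$ there is $q$ with $h(\chain_q)\prec\chain_p$ and vice versa; choosing $q_2$ large enough and then $p_2$ appropriately gives a configuration below both $(q,p)$ and $(q_1,p_1)$. Once that is in place, the rest is the short arithmetic above combined with the identity $M+r=M_1+l$ already established inside the proof of Lemma~\ref{lem:independent}, so no genuinely new argument is needed.
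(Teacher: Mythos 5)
Your proof is correct and takes essentially the same route as the paper's: both deduce the corollary from the identity $M+r = M_1+l$ supplied by Lemma~\ref{lem:independent} and conclude via the same arithmetic. The one thing you add is the explicit reduction from two arbitrary admissible parameter pairs to a nested configuration via a common refinement --- a step the paper's two-line proof leaves tacit --- and your justification of that reduction (refining chains, uniform continuity of $h$ and $h^{-1}$) is sound.
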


\begin{proof} By Lemma \ref{lem:independent}, $M_1 + l = M + r$. Therefore
$R_1 = M_1 + p_1 - q_1 = M_1 + (p + l) - (q + r) = M + r + p - q - r = R$.
\end{proof}

\begin{df}\label{df:bridges}
We call an arc $B \in \C0$ a \emph{$p$-bridge} if the boundary points of $B$ are $p$-points with $p$-level
$0$, and if $L_p(x) \ne 0$ for every $p$-point $x \in \Int B$.
\end{df}

\begin{cor}\label{cor:bridges}
Let $B' \subset \C0$ be a $(q+1)$-bridge and $\partial B' = \{ a', b' \}$. There exists a $(p+M+1)$-bridge
$B$ such that for $\partial B = \{ a, b \}$ we have $h(B') \subset K_a \cup  B \cup K_b$ and
$h(a') \in K_a$, $h(b') \in K_b$, where $K_a$ and $K_b$ are the arc-components of the link $\ell_p^{s_{M+1}}$
of $\chain_p$ containing $a$ and $b$ respectively.
\end{cor}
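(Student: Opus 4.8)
The plan is to deduce Corollary~\ref{cor:bridges} directly from the combinatorial bijection $h_{q,p}\colon E_q\to E_{p+M}$ supplied by Proposition~\ref{pro:levels} (and organised in Definition~\ref{df:hqp}), together with the fact that $h$ restricts to an order‑preserving homeomorphism of the ray $\C0$ fixing $\alpha$. The three facts I would lean on are: (i) $h|_{\C0}$ is order‑preserving, so $h([a',b'])=[h(a'),h(b')]$ whenever $a'\preceq b'$; (ii) $h_{q,p}$ is an order‑preserving bijection with $L_q(x')=l\Rightarrow L_p(h_{q,p}(x'))=l+M$; and (iii) by Proposition~\ref{pro:levels}(1) and the construction in its proof (the ``$\approx$'' in Figure~\ref{fig:levels2}), for every $q$‑point $x'$ with $L_q(x')=l\ge 1$ the image $h(x')$ lies in the arc‑component of $\ell_p^{s_{l+M}}$ that also contains $h_{q,p}(x')$.

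Given the $(q+1)$-bridge $B'$, I would first fix notation: write $\partial B'=\{a',b'\}$ with $a'\prec b'$. Since the endpoints of a $(q+1)$-bridge are $(q+1)$-points of $(q+1)$-level $0$, they are $q$-points with $L_q(a')=L_q(b')=1$. Set $a:=h_{q,p}(a')$ and $b:=h_{q,p}(b')$; by (ii) these are $p$-points with $L_p(a)=L_p(b)=M+1$, i.e.\ $(p+M+1)$-points of level $0$, they are distinct by injectivity of $h_{q,p}$, and $a\prec b$ by order‑preservation. The next step is to check that $B:=[a,b]$ is a $(p+M+1)$-bridge: if some $z\in\Int B$ were a $(p+M+1)$-point of level $0$, i.e.\ a $p$-point of level $M+1$, then $z\in E_{p+M}$, so $z=h_{q,p}(z')$ for some $q$-point $z'$ with $L_q(z')=1$ and, by order‑preservation, $a'\prec z'\prec b'$; then $z'$ would be a $(q+1)$-point of level $0$ in $\Int B'$, contradicting that $B'$ is a $(q+1)$-bridge. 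Hence $B$ is a $(p+M+1)$-bridge with $\partial B=\{a,b\}$.

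It remains to establish $h(B')\subseteq K_a\cup B\cup K_b$, where $K_a$, $K_b$ are the arc‑components of $\ell_p^{s_{M+1}}$ containing $a$, $b$ (note $a,b$ do lie in $\ell_p^{s_{M+1}}$, being $p$-points of level $M+1$). By (i), $h(B')=[h(a'),h(b')]$, and by (iii) with $l=1$, $h(a')\in K_a$ and $h(b')\in K_b$ — which already yields the asserted $h(a')\in K_a$, $h(b')\in K_b$. Since $K_a,K_b$ are arc‑components of one link they are disjoint subarcs of the ray $\C0$, and because $a\in K_a$, $b\in K_b$, $a\prec b$, either $K_a=K_b$, in which case $[h(a'),h(b')]\subseteq K_a$ and we are done, or $K_a$ precedes $K_b$ entirely, with $v_a:=\sup K_a\prec u_b:=\inf K_b$ and $a\preceq v_a$, $u_b\preceq b$. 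In the latter case I would split $[h(a'),h(b')]=[h(a'),v_a]\cup[v_a,u_b]\cup[u_b,h(b')]$; the outer pieces lie in $K_a$ and $K_b$ respectively (each has both endpoints in the corresponding arc), and $[v_a,u_b]\subseteq[a,b]=B$ because $a\preceq v_a$ and $u_b\preceq b$. Thus $h(B')\subseteq K_a\cup B\cup K_b$ in all cases.

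The only genuinely delicate point — the one I would verify most carefully — is ingredient (iii): that the combinatorially defined $h_{q,p}$ of Definition~\ref{df:hqp} tracks $h$ in the \emph{strong} form ``$h(x')$ and $h_{q,p}(x')$ lie in the same arc‑component of $\ell_p^{s_{L_q(x')+M}}$'', which is precisely what the inductive argument in the proof of Proposition~\ref{pro:levels} produces. Everything else is formal bookkeeping about disjoint arc‑components of a link inside the linearly ordered ray $\C0$ and the fact that $h$ and $h_{q,p}$ are order‑preserving. (I take for granted the standing hypothesis that $M+1\ge 1$, so that $\ell_p^{s_{M+1}}$ and the notion of a $(p+M+1)$-bridge are defined; by Corollary~\ref{cor:independent} the shift is intrinsic, so this is a mild normalisation.)
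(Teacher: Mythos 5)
Your proposal is correct and follows essentially the same route as the paper: you set $a,b$ to be the $h_{q,p}$-images of $a',b'$, invoke Proposition~\ref{pro:levels}(1) (together with the order-preserving identification of $h_{q,p}$ made in its proof and Definition~\ref{df:hqp}) to place $h(a')$ and $h(b')$ in the arc-components $K_a,K_b$ of $\ell_p^{s_{M+1}}$, and then derive $h(B')\subset K_a\cup B\cup K_b$ from the order structure on $\C0$. The only difference is cosmetic: you spell out the case split $K_a=K_b$ versus $K_a$ preceding $K_b$ (which the paper compresses into its final ``therefore'' and its remark about $B'$ lying in a single link), and you explicitly verify that $[a,b]$ is a bridge by pulling a hypothetical interior level-$0$ point back through the order-preserving bijection $h_{q,p}$.
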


\begin{proof}
Proposition~\ref{pro:levels} dealt with points in $E_{q,j}$ for $j \geq 1$,
but bridges involve points of level zero. Since $E_{q,1} = E_{q+1,0}$,
in this corollary we can work with $(q+1)$-bridges.

For each $j \geq 1$, $E_{q,j}$ is contained in a single link
$\ell^{s'_j}_q \in \chain_q$ and by Proposition~\ref{pro:levels}, for $\ell^{s_{M+j}}_p
\supseteq h(\ell^{s'_j}_q)$, every point of $h(E_{q,j})$ is contained in an
arc component of $\ell^{s_{M+j}}_p$ which contains a $p$-point of $E_{p,M+j} =
E_{p+M,j}$.
Since $E_{q+1,0} = E_{q,1}$ and $E_{p+M+1,0} = E_{p+M,1}$, every point of
$h(E_{q+1,0}) = h(E_{q,1})$ is contained in an arc component of
$\ell^{s_{M+1}}_{p}$ which contains a point of $E_{p+M,1} = E_{p+M+1,0}$.

Every two adjacent points of $E_{q+1,0}$ are the boundary points of a
$(q+1)$-bridge, and every two adjacent points of $E_{p+M+1,0}$ are the
boundary points of a $(p+M+1)$-bridge. We also have
$h_{q, p+M}(E_{q+1,0}) = h_{q, p+M}(E_{q,1}) = E_{p,M+1} = E_{p+M+1,0}$.
Therefore, for every $(q+1)$-bridge $B'$ there exists a $(p+M+1)$-bridge $B$ such that
$h_{q, p+M}(B') = B$. More precisely, for every
$(q+1)$-bridge $B'$ and $\partial B' = \{ a', b' \}$, there exists a
$(p+M+1)$-bridge $B$ such that for $\partial B = \{ a, b \}$ we have
$h(B') \subset K_a \cup B \cup K_b$ with $h(a') \in K_a$ and $h(b')
\in K_b$. Note that if $B'$ is a $(q+1)$-bridge with center $z'$ and
$\partial B' = \{ a', b' \}$ and $B'$ is contained in a single link
$\ell_{q+1}^{s'_1}$, then $h(B')$ is contained in the arc component
$K_a = K_b$ which contains also a $(p+M+1)$-point $z$ such that
$L_{p+M+1}(z) = L_{q+1}(z')$. So the arc component $K_a$ contains a
$(p+M+1)$-bridge $B$ with center $z$ and we have again $h(B')
\subset K_a \cup  B \cup K_b$.
\end{proof}

\begin{ex}\label{ex:sin1x}
A {\em $\sin \frac1x$-continuum} is a homeomorphic copy of
\[
\Big( \{ 0 \} \times [-1,1] \Big) \  \cup \ \Big\{ (x, \sin \frac1x) : x \in (0,1] \Big\}
\]
and the arc $\{ 0 \} \times [-1,1]$ is called the {\em bar} of the
$\sin \frac1x$-continuum.
Assume that $s>\sqrt{2}$ is such that the inverse limit $\ILs$ contains a $\sin \frac1x$-continuum $H$.
(Such $s$ exist in abundance, cf.\ \cite{BBD} and \cite{Bsubcontinua}.)
Then $\{\sigma^{-n}(H)\}_{n=0}^{\infty}$ is a sequence of pairwise disjoint $\sin \frac 1x$-continua with
$\diam(\sigma^{-n}(H))\to 0$ as $n\to \infty$.
There is then a sequence of disjoint neighborhoods $U_n$ of $\sigma^{-n}(H)$ with $diam(U_n)\to 0$. For each $n$, $U_n\cap \C0$ contains arbitrarily long arcs. Pick a sequence of arcs
$A_n \subset U_n \cap \C0$ of arc-length $\geq n+1$,
and construct a bijection $h: \ILs \circlearrowleft$ such that $h$ is the
identity on $\ILs \setminus \cup_n A_n$ and on each $A_n$,
$h$ fixes $\partial A_n$, but moves some points in
$A_n$ homeomorphically such that there is $x_n \in A_n$ with
$\bar d(x_n, h(x_n)) = n$.
Since $\diam(U_n) \to 0$, we find that $h$ is continuous and bijective.
Finally the compactness of $\ILs$ implies that
$h$ is a homeomorphism. Even though $h$ is isotopic to the identity,
$\sup_{x \in \C0} \bar d( x, h(x)) = \infty$.
\end{ex}

Therefore we cannot assume that a general self-homeomorphism of $\ILs$ has an $R \in \Z$ such that
$\sup_x \bar d(h(x), \sigma^R(x)) < \infty$. Block et al.\ \cite[Theorem 4.2]{BJK} used this property
to conclude that $h$ and $\sigma^R$ are pseudo-isotopic, \ie they permute the composants
of $\ILcores$
in the same way. However, since $\sigma^{-R} \circ h$ preserves
$(q+1)$-bridges for some $R \in \Z$ and $q$
sufficiently large, we can still follow the argument from \cite{BJK}.

\begin{proof}[Proof of Theorem~$\ref{thm:pseudo}$]
Let $P = s/(1+s) > 1/2$ be the orientation reversing fixed point of $T_s$ and $Q$ the center between $c_2$ and $c_1$.
 Let $\eps = \mesh (\chain_p)$ in Definition~\ref{df:chain}.
Without loss of generality, we can take $\eps/2 < \min\{ |c-P|, |c-Q| \}$.
Let $x \in \ILs \setminus \C0 = \ILcores$ be arbitrary. Recall that the composant of $x$ in $\ILcores$ is the
union of all proper subcontinua of $\ILcores$ containing $x$. Without loss of generality we can fix $q\in \N$
such that $\pi_{q+1}(x) \geq P$. Fix $p \in \N$ and $M \in \Z$ as in Proposition~\ref{pro:levels} such that
$h(\chain_q) \preceq \chain_p$ and $h$ sends $(q+1)$-bridges to $(p+M+1)$-bridges in terms of
Corollary~\ref{cor:bridges}. Let $R = M+p-q$, so $p+M+1 = q+R+1$.
Since by Corollary~\ref{cor:independent}, $R$ does not depend on $q$ and $p$, we can take  $q$ and $p$ larger
than $|R|$  without loss of generality.

Recall that the links $\ell^k_p$ of $\chain_p$ are of the form $\ell^k_p = \pi_{p}^{-1}(I^k_{p})$ of width $\leq \eps s^{-p}/2$.
The map $\sigma^{-R}$ maps the chain $\chain_p$ to a chain $\tilde \chain_{p-R}$ whose links are of the form
$\pi_{p-R}^{-1}(I^k_p)$ and hence also with width $\leq \eps s^{-p}/2$; this chain is coarser than
$\chain_{p-R}$ if $R < 0$. Furthermore, the $\sigma^{-R}$-image of a $(q+R+1)$-bridge is a $(q+1)$-bridge.

Take $\tilde h = \sigma^{-R} \circ h$. Since $h(\chain_q) \preceq \chain_p$, we have
$\tilde h(\chain_q) \preceq \tilde \chain_{p-R}$ and $\tilde h$ sends $(q+1)$-bridges to $(q+1)$-bridges, but
the `error' allowed in Corollary~\ref{cor:bridges}, \ie the arc-components of links from $\chain_p$, must now
be replaced by arc-components of links of $\tilde \chain_{p-R}$.
Recall that $\width(\chain_p) = \max_j |I^j_p|$, and
 $|\pi_{p-i}(\ell^j_p)| = |\pi_p(\ell^j_p)|s^i = |I^j_p|s^i$, for every
$0 \leq i \leq p$.
Therefore, $\pi_{p-R}(\tilde \ell^j_{p-R}) = \pi_{q-M}(\tilde \ell^j_{q-M}) \leq \eps s^{-p}/2$,
and
$\pi_{q+1}(\tilde \ell^j_{p-R}) = \pi_{q+1}(\tilde \ell^j_{q-M}) = \pi_{q-M}(\tilde \ell^j_{q-M})s^{-M-1} \leq \eps s^{-p-M-1}/2$. Thus, the $(q+1)$-th projection of links of $\tilde \chain_{p-R}$ are intervals of length
$\leq \eps s^{-(p+M+1)}/2 = \eps s^{-(q+R+1)}/2$, see Figure~\ref{fig:bridge}.

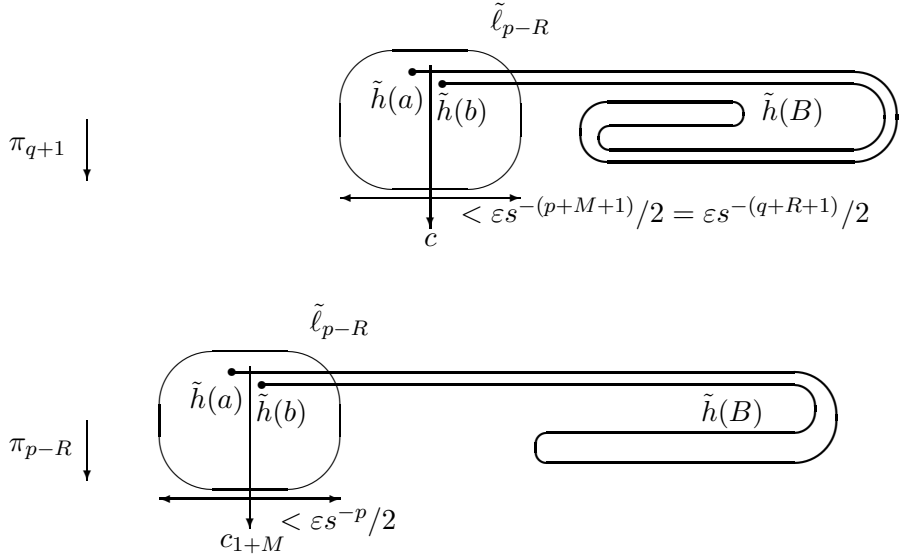
\begin{figure}[ht]
\unitlength=8mm
\begin{picture}(12,10)(2.5,0)
\thinlines
\put(10, 8){\oval(3,2.3)} \put(11,9.5){\small $\tilde \ell_{p-R}$}
\put(4.3,8){\vector(0,-1){1}} \put(3,7.5){\small $\pi_{q+1}$}
\put(10,8.9){\vector(0,-1){2.7}} \put(9.9,5.9){\small $c$}
\put(10,6.7){\vector(-1,0){1.5}} \put(10,6.7){\vector(1,0){1.5}}
\put(10.5,6.3){\small $< \eps s^{-(p+M+1)}/2 = \eps s^{-(q+R+1)}/2$}
\thicklines
\put(9.7,8.8){\line(1,0){7.3}}\put(9.7,8.8){\circle*{0.15}}\put(17, 8.05){\oval(1.5,1.5)[r]}
\put(10.2,8.6){\line(1,0){6.8}}\put(10.2,8.6){\circle*{0.15}}\put(17, 8.05){\oval(1.1,1.1)[r]}
\put(9, 8.2){\small $\tilde h(a)$}\put(10.1, 8){\small $\tilde h(b)$}
\put(17,7.3){\line(-1,0){4}} \put(13, 7.8){\oval(1,1)[l]}
\put(17,7.5){\line(-1,0){4}} \put(13, 7.7){\oval(0.4,0.4)[l]}
\put(13,8.3){\line(1,0){2}} \put(15, 8.1){\oval(0.4,0.4)[r]}
\put(13,7.9){\line(1,0){2}}
\put(15.5,8){\small $\tilde h(B)$}
\thinlines
\put(7, 3){\oval(3,2.3)}  \put(8,4.5){\small $\tilde \ell_{p-R}$}
\put(4.3,3){\vector(0,-1){1}} \put(3,2.5){\small $\pi_{p-R}$}
\put(7,3.9){\vector(0,-1){2.7}} \put(6.5,0.9){\small $c_{1+M}$}
\put(7,1.7){\vector(-1,0){1.5}} \put(7,1.7){\vector(1,0){1.5}}
\put(7.5,1.2){\small $< \eps s^{-p}/2$}
\thicklines
\put(6.7,3.8){\line(1,0){9.3}}\put(6.7,3.8){\circle*{0.15}}\put(16, 3.05){\oval(1.5,1.5)[r]}
\put(7.2,3.6){\line(1,0){8.8}}\put(7.2,3.6){\circle*{0.15}}\put(16, 3.2){\oval(0.8,0.8)[r]}
\put(6, 3.2){\small $\tilde h(a)$}\put(7.1, 3){\small $\tilde h(b)$}
\put(16,2.3){\line(-1,0){4}} \put(12, 2.55){\oval(0.5,0.5)[l]}
\put(16,2.8){\line(-1,0){4}}
\put(14.5,3){\small $\tilde h(B)$}
\end{picture}
\caption{The $(p-R)$-th and $(q+1)$-th projection of `the bridge' $\tilde h(B)$ with relevant link
$\tilde \ell_{p-R}$. The picture is suggestive of $M+1 \leq 0$;
if instead $M+1 > 0$, then
$\tilde h(B)$ contains fewer $(q+1)$-points than $(p-R)$-points.
}
\label{fig:bridge}
\end{figure}

The $(q+1)$-bridges that are small enough to belong to one or two links of $\chain_{q}$ will map to arcs contained in the link $\tilde \ell_{p-R}$.
Since $\pi_{q+1}(x) \geq P$ and $\eps s^{-(q+1)}/2 < |c-P|$,
no such short bridge can be close to $x$.
On the longer $(q+1)$-bridges of $\chain_q$ that map outside of
$\tilde \ell_{p-R}$, $\tilde h$ acts as a trivial one-to-one correspondence, sending the first such bridge
to the first, the second to the second, etc.

Find a sequence $(x_n)_{n \in \N} \subset \C0$ such that $x_n \to x$.
Then for large $n$, $x_n$ belongs to
a long $(q+1)$-bridge, and by the above argument, $\tilde h(x_n)$ and $x_n$ belong to the same $(q+1)$-bridge
up to an `error' of at most $\eps s^{-(q+R+1)}/2$.
Take $H_n = [\tilde h(x_n), x_n]$ and a subsequence
such that $H_{n_j} \to H$ in Hausdorff topology. Clearly $H$ is a continuum and $x, \tilde h(x) \in H$.
Since $\pi_{q+1}(x) \geq P$, the arcs $H_{n_j}$ belong to arcs whose $(q+1)$-projections belong to
$[c-\eps s^{-(q+R+1)}/2,c_1]$ for all sufficiently large $j$. Since $q+R+1 \ge 1$ and $\eps/2 < c - Q$, we have
$Q < c - \eps/2 < c - \eps s^{-(q+R+1)}/2$ implying $[c-\eps s^{-(q+R+1)}/2,c_1] \subset [Q, c_1]$.

Therefore
$\pi_{q+1}(H_{n_j}), \pi_{q+1}(H) \subset  [Q, c_1]$, and since $[Q, c_1]$ is a proper subset of $[c_2,c_1]$ and the inclusion holds for arbitrarily large $q$,
$H$ is a proper subcontinuum of
$\ILcores$. It follows that $\tilde h(x)$ and $x$ belong to the same composant of $\ILcores$. Apply $\sigma^R$ to
find that $h(x)$ and $\sigma^R(x)$ belong to the same composant as well.
\end{proof}

Pseudo-isotopy of $h$ implies that the number of composants being mapped to themselves
is the same for $h^n$ and $\sigma^{nR}$. This number grows like $s^{nR}$,
which in \cite{BJKK} provides a proof of the Ingram conjecture
for tent maps with periodic critical point.
In this situation, \cite{BJKK} in fact also shows that $h$ is isotopic to
a power of the shift. Due to the existence of composants
that are not arc-connected, this is not so clear in the general case.

\begin{rem}
Not every pseudo-isotopy is an isotopy. For instance, a
homeomorphism flipping the bar of a $\sin \frac1x$-continuum
cannot be isotopic to the identity.
If the bonding map is a quadratic map within the first period doubling
cascade, then the inverse limit space is a finite collection of
$\sin \frac1x$-continua, see \cite{BI}, and we can indeed construct
homeomorphism that are pseudo-isotopic but not isotopic to the identity.
Among those tent maps $T_s$, $s \in [\sqrt2,2]$,
whose inverse limit space is known to contain $\sin \frac1x$-continua,
both in \cite{BBD} and \cite{Bsubcontinua}, the topology is
much more complicated, as more than a
single ray can be expected to accumulate on their bars.
Thus the following question is very relevant:
\begin{quote}
Is every self-homeomorphism of $\ILs$
isotopic to a power of the shift?
\end{quote}
We know this to be true if $c$ is periodic or non-recurrent
\cite{BJKK, BKRS}, but this case is simpler, because
the only proper subcontinua of $\ILcores$ are arcs or points.
\end{rem}

\medskip
\noindent
Department of Mathematical Sciences\\
Montana State University\\
Bozeman, MT 59717, USA\\
\texttt{umsfmbar@math.montana.edu}\\
\texttt{http://www.math.montana.edu/}$\sim$\texttt{umsfmbar/}

\medskip
\noindent
Department of Mathematics\\
University of Surrey\\
Guildford, Surrey, GU2 7XH, UK\\
\texttt{h.bruin@surrey.ac.uk}\\
\texttt{http://personal.maths.surrey.ac.uk/st/H.Bruin/}

\medskip
\noindent
Department of Mathematics\\
University of Zagreb\\
Bijeni\v cka 30, 10 000 Zagreb, Croatia\\
\texttt{sonja@math.hr}\\
\texttt{http://www.math.hr/}$\sim$\texttt{sonja}


\begin{thebibliography}{99}

\bibitem{AP} J. Anderson and I. Putnam, {\em Topological invariants for
substitution tilings and their associated $C^*$-algebras,}  Ergodic Theory \&
Dynamical  Systems
\textbf{18} (1998), 509--537.

\bibitem{BBD} M.\ Barge, K.\ Brucks, B.\ Diamond,
{\em Self-similarity in inverse limit spaces of the tent family,}
Proc. Amer. Math. Soc. {\bf 124}  (1996), 3563--3570.

\bibitem{BaDi1}
M.\ Barge, B.\ Diamond,
{\em Homeomorphisms of inverse limit spaces of one-dimensional maps,}
Fund. Math. {\bf 146} (1995), 171-187.

\bibitem{BaDi2} M.\ Barge, B.\ Diamond, {\em Subcontinua of the closure of the unstable manifold at
	homoclinic tangency,}  Ergod. Th. \& Dyn.
	Sys. {\bf19 }(1999), 289-307.

\bibitem{BaDi3}M.\ Barge, B.\ Diamond, {\em Inverse limit spaces of infinitely renormalizable maps,}
Topology and its Applications {\bf 83} (1998), 103--108.



\bibitem{BH} M.\ Barge, S.\ Holte,
{\em Nearly one-dimensional H\'enon attractors and inverse limits,}
Nonlinearity {\bf 8} (1995), 29-42.

\bibitem{BI} M.\ Barge, W.\ Ingram,
{\em Inverse limits on $[0,1]$ using logistic bonding maps,}
Topology and its Applications, {\bf 72} (1996), 159-172.


\bibitem{BJK} L.\ Block, S.\ Jakimovik, J.\ Keesling,
{\em On Ingram's conjecture,}
Spring Topology and Dynamical Systems Conference.  Topology Proc. {\bf 30}
(2006), 95--114.

\bibitem{BJKK} L.\ Block, S.\ Jakimovik, J.\ Keesling, L.\ Kailhofer,
{\em On the classification of inverse limits of tent maps,}
Fund. Math.  {\bf 187}  (2005),  no. 2, 171--192.

\bibitem{BKRS} L.\ Block, J.\ Keesling, B. Raines,  S.\ \v{S}timac,
{\em  Homeomorphisms of unimodal inverse limit spaces with a non-recurrent critical point,}
Topology Appl. {\bf 156} (2009), 2417 - 2425.

\bibitem{BB} K.\ Brucks, H.\ Bruin,
{\em Subcontinua of inverse limit spaces of unimodal maps,}
Fund. Math. {\bf 160} (1999), 219--246.




\bibitem{Bembeddings} H.\ Bruin,
{\em Planar embeddings of inverse limit spaces of unimodal maps,}
Topology and its Applications, {\bf 96} (1999), 191--208.

\bibitem{Bsubcontinua} H.\ Bruin,
{\em Subcontinua of Fibonacci-like unimodal inverse limit spaces,}
Topology Proceedings {\bf 31} (2007), 37-50.

\bibitem{Bfinite} H.\ Bruin,
{\em Inverse limit spaces of post-critically finite tent maps,}
Fund. Math. {\bf 165} (2000), 125--138.



\bibitem{HRS} F.\ Hofbauer, P.\ Raith, K.\ Simon,
{\em Hausdorff dimension for some hyperbolic attractors with overlaps and without finite Markov partition,}
Ergod. Th. \& Dynam. Sys. {\bf  27}  (2007), 1143--1165.

\bibitem{Ingram} W.\ T.\ Ingram,
{\em Inverse limits on $[0, 1]$ using tent maps and certain other piecewise linear bounding maps},
Continua with the Houston problem book (Cincinnati, OH, 1994), in: H.~Cook et al. (Eds.),
Lecture Notes in Pure and Appl.\ Math.\ {\bf 170}, Dekker (1995), 253--258.


\bibitem{Kail2} L.\ Kailhofer,
{\em A classification of inverse limit spaces of tent maps with
periodic critical points,}  Fund. Math.  {\bf 177}  (2003), 95--120.

\bibitem{MT} J.\ Milnor and W.\ Thurston, {\em  On iterated maps of the
interval: I, II}, Preprint 1977. Published in Lect. Notes in
Math. {\bf 1342}, Springer, Berlin New York (1988) 465-563.

\bibitem{Mis} M.\ Misiurewicz,
{\em Embeddings of inverse limits of interval maps as attractors,}
Fund.Math. {\bf 125} (1985),  23--40.


\bibitem{Rain} B.\ Raines,
{\em  Inhomogeneities in non-hyperbolic one-dimensional invariant sets,}
Fund. Math. {\bf 182}  (2004), 241--268.

\bibitem{RS} B.\ Raines, S.\ \v{S}timac,
{\em A classification of inverse limit spacers of tent maps with non-recurrent critical point,}
Algebraic and Geometric Topology {\bf 9} (2009), 1049--1088.

\bibitem{Sz} W.\ Szczechla,
{\em Inverse limits of certain interval mappings as attractor in two
dimensions,}
Fund.Math. {\bf 133} (1989),  1--23.

\bibitem{Stim} S.\ \v{S}timac,
{\em A classification of inverse limit spaces of tent maps with finite critical orbit,} Topology Appl.
{\bf 154} (2007), 2265--2281.

\bibitem{SV} R.\ Swanson, H.\ Volkmer,
{\em Invariants of weak equivalence in primitive matrices,}
Ergod. Th. \& Dyn. Sys. {\bf 20} (2000), 611--626.

\bibitem{W1} R.~F.~Williams,
\emph{One-dimensional non-wandering sets}, Topology {\bf 6}
(1967), 473--487.

\bibitem{W2} R.~F.~Wlliams,
\emph{Classification of one dimensional attractors}, 1970 Global
Analysis (Proc. Sympos. Pure Math., Vol. XIV, Berkeley, Calif.,
1968) pp.~341--361 Amer.~Math.~Soc., Providence, R.I.

\end{thebibliography}
\end{document}